\definecolor{darkblue}{rgb}{0,0,0.4} 
\newcommand{%
	\def\svgwidth{\textwidth}
	\import{./}{.pdf_tex}
}[2]{%
	\def\svgwidth{#1\textwidth}
	\import{./}{#2.pdf_tex}
}
\newtheorem{IntroTheorem}{Theorem}
\newtheorem*{Theorem*}{Theorem}
\numberwithin{equation}{section}
\newtheorem{Theorem}{Theorem}[section]
\newtheorem{Lemma}[Theorem]{Lemma}
\newtheorem{Proposition}[Theorem]{Proposition}
\newtheorem{Corollary}[Theorem]{Corollary}
\theoremstyle{definition}
\newtheorem{Definition}[Theorem]{Definition}
\newtheorem{Example}[Theorem]{Example}
\theoremstyle{remark}
\newtheorem{Remark}[Theorem]{Remark}
\numberwithin{figure}{section}
\numberwithin{table}{section}
\begin{document}
\title{Obstruction to Equivariant Ribbon Concordance}
\author{Siavash Jafarizadeh}
\date{August 2025}

\maketitle
\begin{abstract}
	A periodic link, is link in $S^3$ with action of $\mathbb{Z}_p$ by rotation with $2\pi/p$ around a fixed unknot $U$. The equivariant 
    Khovanov homology of periodic links has been studied in \cite{BP17}. We prove that the equivariant Khovanov homology for 
    periodic links is functorial under equivariant cobordisms. Furthere more, we show that equivariant ribbon concordances 
    induce a split injection on equivariant Khovanov homology.
\end{abstract}

\section{Introduction}\label{Sec:Introduction}

For a prime integer $p$, we can consider the action of $\mathbb{Z}_p$ (the cyclic group of order $p$) 
on $\mathbb{R}^3$ by rotation by $\frac{2\pi}{p}$ around the $z$-axis. As this action is proper with fixed-point set the $z$-axis, 
it induces an action on $S^3=\mathbb{R}^3\cup\infty$ with the fixed points the unknot $U=z\text{-axis}\cup \infty$. 
Throughout this manuscript, we denote the $z$-axis in $\mathbb{R}^3$ by $\boldsymbol{z}$ and the origin in $\mathbb{R}^n$ by 
$\boldsymbol{0}$. A link $L\subset \mathbb{R}^3\backslash\boldsymbol{z} =S^3\backslash U$ is called $p$-\emph{periodic} if it is invariant under the action of $\mathbb{Z}_p$.

In this work, we view $p$ as fixed. Hence, when it will not cause confusion we drop the $p$ from the notation. 
Also, we denote this action by $\theta$. The action of $\mathbb{Z}_p$ on $S^3$ can be extended to 
$S^3\times [0,1]$, if we let the action be trivial on $[0,1]$. 
Let us denote the extended action by $\tilde{\theta}$. Then $\tilde{\theta}$ has fixed annulus $\tilde{U}=U\times [0,1]$.
By a \emph{cobordism} between links $L_0$ and $L_1$ we refer to a smoothly embedded surface $\Sigma$ 
in $\mathbb{R}^3\times [0,1]$ with $\partial\Sigma \subset \mathbb{R}^3\times\{0,1\}$ and 
$\partial\Sigma \cap (\mathbb{R}^3\times\{i\})= L_i$ for $i=0,1$. Given a smooth, 
compact cobordism $\Sigma$ in $S^3\times [0,1]$ from $L$ to $L'$, a \textit{movie} of the link cobordism $\Sigma$ is a finite 
collection of link diagrams $\{D_{i}\}_{i=0}^{k}$ for a non-negative integer $k$, such that the link diagram $D_{i}$ 
is related to $D_{i+1}$ by either a single birth, a single saddle, a single death, or a Reidemeister move, localized 
to a disk in $\mathbb{R}^2$. 	
By work of Carter and Saito \cite{CS93}, every smooth cobordism $\Sigma$ as above has a movie. Additionally, they proved, 
if two smooth cobordisms $\Sigma,\Sigma'\subset \mathbb{R}^3\times[0,1]$ between links $L_0$ and $L_1$ are ambient isotopic 
relative to the boundary, then their movies differ by finitely many \emph{movie moves} (see also \cite{Bar05}, and \cite{Kho06}). Let us fix our convention and by isotopy, we refer to ambient isotopy relative to the boundaries.

Movies associated to isotopic link cobordisms are related by a sequence of \emph{movie moves}, which locally adjust the frames of a movie \cite{CS93}. For a list of movie moves, see figures 5-9 in \cite{Kho06}. For a more detailed treatment of the movie moves, we refer the reader to \cite{Bar05,Kho06,LLS21}. 

\begin{Definition}\label{Def:EquiCob}
 For periodic links $L_0$ and $L_1$ in $\mathbb{R}^3\backslash\boldsymbol{z}$, a smooth cobordism 
 $\Sigma\subset (\mathbb{R}^3\backslash\boldsymbol{z})\times[0,1]$ is called an \emph{equivariant cobordism} if it is 
 invariant under the extended action $\tilde{\theta}$, 
 and is disjoint from the annulus $\tilde{U}$.
\end{Definition}

We show that equivariant cobordisms $\Sigma\subset (\mathbb{R}^3\backslash\boldsymbol{z})\times [0,1]$ 
have \textit{equivariant movie} presentations. Two equivariant cobordisms $\Sigma$ and $\Sigma'$ are called \emph{equivariantly isotopic}, 
if there is an ambient isotopy of $(\mathbb{R}^3\backslash\boldsymbol{z})\times[0,1]$ relative to boundary such that the isotopy commutes 
with the action $\tilde{\theta}$. For a smooth equivariant cobordism, we define, 
\begin{Definition}\label{Def:EquiMM}
	An \emph{equivariant movie move} means transforming the movie of a $p$-equivariant cobordism by  Carter Saito movie moves localized 
	to $p$ disjoint disks in $\mathbb{R}^2\backslash\boldsymbol{0}$ such that the disks and the movies correspond under the action of 
	$\mathbb{Z}_p$ by rotation on $\mathbb{R}^2\backslash\boldsymbol{0}$.
\end{Definition}
Analogous to the result above, we show 
\begin{IntroTheorem}\label{Thm:EquiMM}
	Fix two equivariantly isotopic, equivariant cobordisms $\Sigma$ and $\Sigma'$ from $L_0$ to $L_1$ represented by equivariant movies. 
	Then the equivariant movie of $\Sigma$ differs from the equivariant movie of $\Sigma'$ by finitely many equivariant movie moves.
\end{IntroTheorem}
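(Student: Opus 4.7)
The plan is to reduce the statement to the classical Carter--Saito theorem applied in the quotient. Since the rotation action of $\mathbb{Z}_p$ is free on $\mathbb{R}^3\setminus\boldsymbol{z}$, the projection
\[
\pi:(\mathbb{R}^3\setminus\boldsymbol{z})\times[0,1]\longrightarrow \bigl((\mathbb{R}^3\setminus\boldsymbol{z})/\mathbb{Z}_p\bigr)\times[0,1]
\]
is a regular $p$--fold covering. An equivariant cobordism $\Sigma$ descends to a smooth cobordism $\bar\Sigma:=\pi(\Sigma)$, and an equivariant isotopy from $\Sigma$ to $\Sigma'$ descends to an ordinary isotopy relative to the boundary from $\bar\Sigma$ to $\bar\Sigma'$. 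Moreover, the definition of an equivariant movie is engineered so that each symmetric $p$--tuple of elementary modifications (birth, saddle, death, or Reidemeister move on $p$ disjoint rotation--related disks) projects to a single elementary modification downstairs; hence the chosen equivariant movies for $\Sigma$ and $\Sigma'$ project to ordinary movies for $\bar\Sigma$ and $\bar\Sigma'$.

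I would then apply Carter--Saito to $\bar\Sigma,\bar\Sigma'$ to obtain a finite sequence of ordinary movie moves relating their movies, and lift these one at a time. A Carter--Saito movie move is localized inside a disk $D\subset\mathbb{R}^2$; provided $D$ is disjoint from the image of $\boldsymbol{0}$, the preimage $\pi^{-1}(D)$ consists of exactly $p$ pairwise disjoint disks in $\mathbb{R}^2\setminus\boldsymbol{0}$, cyclically permuted by the deck transformations, and the local transformation lifts synchronously on each sheet. By Definition~\ref{Def:EquiMM}, this is precisely an equivariant movie move, and concatenating the lifts in the order dictated by the downstairs sequence carries the equivariant movie of $\Sigma$ to that of $\Sigma'$.

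The main obstacle is ensuring that, for every move in the Carter--Saito sequence, the supporting disk $D$ can be arranged to miss the image of $\boldsymbol{0}$, so that the covering is trivialized over $D$. Because each frame of the downstairs movie is a link diagram disjoint from the image of $\bar U$, and the set of frames is finite, a compactness/transversality argument lets us isotope each supporting disk off the branch image inside a small neighborhood of the change; any such auxiliary planar isotopy is itself expressible as finitely many further Carter--Saito moves, and each of these again lifts equivariantly by the same argument, so the process terminates after a finite number of steps. A secondary point is the compatibility between the chosen equivariant movies upstairs and the projected movies downstairs, which is handled by the equivariant movie existence result proved earlier in the paper, whose construction is by design compatible with projection under $\pi$. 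Assembling these lifts produces the required finite chain of equivariant movie moves from $\Sigma$ to $\Sigma'$.
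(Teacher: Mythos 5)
Your overall strategy is the same as the paper's: pass to the quotient $p$--fold cover, invoke a movie-move theorem downstairs, and lift the resulting moves. The paper quotes exactly this structure, but it does not apply the classical Carter--Saito theorem and then patch things; instead it cites a result (Lemma~A.2 of Grigsby--Licata--Wehrli \cite{GLW18}) that directly provides, for \emph{annular} link cobordisms, movie moves that are already localized to disks disjoint from the origin. That is precisely the point your proposal tries to handle by hand, and it is where your argument has a genuine gap.

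The gap is this: the classical Carter--Saito theorem places movie moves in disks $D\subset\mathbb{R}^2$ determined by the Cerf-theoretic comparison of the two cobordisms, and it does not by itself guarantee these disks can be chosen to avoid a marked point. Your proposed fix --- isotoping each supporting disk off the branch image, with the auxiliary isotopy ``expressible as finitely many further Carter--Saito moves, and each of these again lifts equivariantly by the same argument'' --- is circular: those auxiliary moves are again produced by Carter--Saito, so their supporting disks again have no a priori reason to avoid the origin, and you would need to repeat the displacement argument for them, with no visible bound guaranteeing termination. More fundamentally, ``avoid a marked point'' is not a local condition you can always satisfy one move at a time, because the required displacement of a disk may interact with adjacent frames or with the circle valuation around the origin that distinguishes annular from planar isotopy. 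This is exactly why an annular version of the movie-move theorem was developed in \cite{GLW18}: it is a separate statement, not a corollary of the planar one via a short perturbation argument. Replacing your compactness/transversality paragraph with a citation to \cite[Lemma~A.2]{GLW18} (as the paper does, along with the existence result Lemma~\ref{Lem:EquMovie} for equivariant movies, also obtained from \cite{GLW18}) closes the gap and makes your outline correct.
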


The \emph{Khovanov homology} is a link invariant that assigns to the link diagram of $L$ a bigraded chain complex of 
$R$-modules $C_{\text{Kh}}(L;R)$, for any unital commutative ring $R$. The graded Euler characteristic of Khovanov homology is the 
Jones polynomial. Moreover, Khovanov homology assigns a chain map to any elementary string move (birth, saddle, death, or Reidemeister move) \cite{Bar05,Kho00,Kho02}.

Assume that $\Sigma$ and $\Sigma'$ are smooth, ambient isotopic cobordisms in $\mathbb{R}^3\times [0,1]$ with boundary 
$\partial\Sigma=\partial\Sigma'=L_0\coprod L_1$. Jacobson \cite{Jac04}, Bar-Natan \cite{Bar05}, and Khovanov \cite{Kho06} 
proved that the maps induced on 
Khovanov homology (denoted by $\text{Kh}(L)$ for a link $L$) by $\Sigma$ and $\Sigma'$ are equal up to a sign i.e., 
$\text{Kh}(\Sigma)=\pm\text{Kh}(\Sigma')$. This property of Khovanov homology is called \emph{functoriality}.

A smooth \emph{concordance} between knots $K_0$ and $K_1\subset S^3$ is a smooth embedded annulus 
$A:S^1\times [0,1]\hookrightarrow S^3\times [0,1]$ with $K_1=A(S^1\times\{1\})\subset S^3\times \{1\}$ and 
$K_0=A(S^1\times\{0\})\subset S^3\times \{0\}$. We also call the 
image of the annulus $A(S^1\times [0,1])=F$ a concordance between $K_1=A(S^1\times \{1\})$ and $K_0=A(S^1\times \{0\})$. 
If the projection $S^3\times [0,1] \rightarrow [0,1]$ is a Morse function when restricted to $F$ with only critical points of
index 0 and 1 (so no local maxima), then we say that $F$ is a \emph{ribbon concordance} between $K_0$ and
$K_0$.

As a consequence of the functoriality of Khovanov homology, one can study Khovanov homology of a ribbon concordance. 
A framework for studying ribbon concordances is Bar-Natan's dotted cobordism treatment of Khovanov homology \cite{Bar05}. 
Levine and Zemki \cite{LZ19} 
showed that if $F$ is a ribbon concordance from $L_0$ to $L_1$, then the induced map 
$\text{Kh}(F):\text{Kh}(L_0)\rightarrow \text{Kh}(L_1)$ is a bigraded split injection. 

Given a finite group $G$ and a $G$-space $X$, \textit{equivariant (co)homology} (also known in the literature as the 
Borel equivariant homology or Cartans mixing construction) is effective for studying the (co)homological behavior of both the space and 
the $G$-action \cite{WyHsiang75}. The equivariant (co)homology of $X$ is given by the (co)homology $H^G_*(X):=H_*(E\times_GX)$ 
(respectively $H_G^*(X):=H^*(E\times_GX)$) where $EG$ is the universal $G$-bundle.

Fix $R$ to be ring, and assume we have an $R$-chain complex $(C,d)$ with a $G$-action, the equivariant (co)homology of $C$, computes the group (co)homology of an $R[G]$-module $M$, with coefficients in the $R[G]$-chain complex $(C,d)$ \cite{KSBrown82}. 
Hence, the \textit{equivariant Khovanov homology} of a $p$-periodic link for a choice of unital ring $R$ is defined as follows:
\begin{Definition}
	The \emph{equivariant Khovanov homology} of the $p$-periodic link $L$ is defined as
	\begin{equation}
		\mathrm{EKh}(L,M)=\mathrm{Ext}_{R[\mathbb{Z}_p]}(M,C_{\mathrm{Kh}}(L;R))
	\end{equation}
where $M$ is a $R[\mathbb{Z}_p]$-module, and $C_{\mathrm{Kh}}(L;R)$ is the Khovanov chain complex.
\end{Definition}
Equivariant Khovanov homology for $p$-periodic links was recently studied by Borodzik and Politarczyk \cite{BP17}. 
They also studied equivariant Bar-Natan and Lee homologies for periodic knots. We restate the construction of 
equivariant Khovanov homology in section \ref{Chp:EquKhovInv}.

In section \ref{Chp:Result}, we show that equivariant Khovanov homology is functorial up to a sign. More specifically, 
we prove that the chain maps induced on the equivariant Khovanov chain complex by equivariantly isotopic 
equivariant cobordisms are chain homotopic up to a factor of $(\pm 1)^p$.

\begin{IntroTheorem}\label{Thm:EquiFunct}
	Given two equivariantly isotopic equivariant link cobordisms $\Sigma,\Sigma'$ from a $p$-periodic link $L_0$ to $L_1$, 
	the induced morphism 
    \begin{equation}
    \mathrm{EKh}(\Sigma),\mathrm{EKh}(\Sigma'):\mathrm{EKh}(L_0;\mathbb{Z})\rightarrow\mathrm{EKh}(L_1;\mathbb{Z})    
    \end{equation}
    differ by a factor of $(\pm1)^p$.
\end{IntroTheorem}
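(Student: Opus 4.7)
The plan is to combine Theorem \ref{Thm:EquiMM} with classical functoriality of Khovanov homology, via a local sign-counting argument on the $p$ symmetric disks. By Theorem \ref{Thm:EquiMM}, there is a finite sequence $\Sigma = \Sigma_0, \Sigma_1, \ldots, \Sigma_N = \Sigma'$ of equivariant movies with consecutive terms related by a single equivariant movie move. Since the factor $(\pm 1)^p$ is multiplicative under composition (if each step changes the induced chain map by $\epsilon_i^p$ for some $\epsilon_i \in \{\pm 1\}$, then the composite changes by $\big(\prod_i \epsilon_i\big)^p$), an induction on $N$ reduces the problem to the case where $\Sigma$ and $\Sigma'$ differ by a single equivariant movie move.

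By Definition \ref{Def:EquiMM}, such a single move consists of $p$ identical Carter--Saito movie moves performed in $p$ disjoint disks $D_1, \ldots, D_p \subset \mathbb{R}^2 \backslash \boldsymbol{0}$, cyclically permuted by the $\mathbb{Z}_p$-action. Each local modification affects only a local tensor factor of the Bar--Natan/Khovanov chain complex. Classical functoriality \cite{Jac04, Bar05, Kho06} supplies, for a single local modification, a chain homotopy $h$ realizing a sign $\epsilon \in \{\pm 1\}$; moreover, the same sign $\epsilon$ appears in all $p$ disks, since the $p$ local moves are $\theta$-translates of one fixed Carter--Saito move and the classical sign depends only on the type of move. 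Assembling $h$ in each of the $p$ local factors via the cubical/tensor-product chain homotopy formula produces a global chain homotopy $H$ between $\mathrm{Kh}(\Sigma)$ and $\epsilon^p \mathrm{Kh}(\Sigma') = (\pm 1)^p \mathrm{Kh}(\Sigma')$. Because $H$ is built symmetrically from $\theta$-translates of a single local homotopy on the disjoint factors, it can be arranged to be $\mathbb{Z}_p$-equivariant. Finally, since $\mathrm{EKh}(L;\mathbb{Z}) = H^{*}(\mathrm{Hom}_{\mathbb{Z}[\mathbb{Z}_p]}(P_\bullet, C_{\mathrm{Kh}}(L;\mathbb{Z})))$ for any projective resolution $P_\bullet \to M$ over $\mathbb{Z}[\mathbb{Z}_p]$, an equivariant chain homotopy at the Khovanov level descends to a chain homotopy between the induced maps on the Hom complex, giving $\mathrm{EKh}(\Sigma) = (\pm 1)^p \, \mathrm{EKh}(\Sigma')$.

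I expect the main obstacle to be the equivariant construction of $H$. The naive approach (doing the $p$ local moves sequentially and composing their chain homotopies) yields a non-equivariant result when $\epsilon = -1$: the composite carries the powers $1, \epsilon, \epsilon^2, \ldots, \epsilon^{p-1}$ across the $p$ disks in a cyclic but non-$\mathbb{Z}_p$-symmetric way, and one cannot symmetrize by averaging because $p$ is not invertible in $\mathbb{Z}$. Circumventing this requires exploiting the tensor decomposition of the Khovanov complex across the disjoint $D_i$, so that $H$ can be produced directly as a symmetric cubical combination of local data rather than as a sequential composition. Carefully verifying this tensor decomposition and tracking signs in the Bar--Natan dotted-cobordism formalism will be the most delicate technical step.
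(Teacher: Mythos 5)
Your overall strategy---reducing to a single equivariant movie move via Theorem \ref{Thm:EquiMM}, localizing to the $p$ disjoint disks, and pulling the classical sign $\epsilon$ out of each local factor---matches the paper's setup. You also correctly identify the central obstruction: the standard tensor-product chain homotopy
\[
k \;=\; \sum_{i=0}^{p-1} \pm\, f_1^{\otimes i}\otimes h\otimes f_0^{\otimes p-1-i}
\]
between $f_0^{\otimes p}$ and $f_1^{\otimes p}$ fails to be $\mathbb{Z}_p$-equivariant (the paper says this explicitly just after equation (3.4)). The problem is that your proposal does not actually resolve this obstruction. The sentence ``Because $H$ is built symmetrically from $\theta$-translates of a single local homotopy on the disjoint factors, it can be arranged to be $\mathbb{Z}_p$-equivariant'' is an assertion, not a construction, and at the strict chain level it is false: the displayed sum is \emph{the} cubical/tensor-product formula, it \emph{is} built from $\theta$-translates of the local datum $h$, and it still is not equivariant, because applying $\theta$ cycles the position of the $h$-slot together with the $f_1$/$f_0$ split and lands outside the set of summands. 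You then relocate the gap to a final paragraph (``circumventing this requires... a symmetric cubical combination... the most delicate technical step'') without filling it.

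The paper's proof does not produce a strictly $\mathbb{Z}_p$-equivariant chain homotopy on $C^{\otimes p}$ at all. Instead it works with $E\mathbb{Z}_p\otimes C^{\otimes p}$: it constructs a $\mathbb{Z}[\mathbb{Z}_p]$-chain map $\widehat{H}:E\mathbb{Z}_p\otimes I\to I^{\otimes p}$ (extending the degree-$0$ and degree-$1$ data by Lemma \ref{Lem:ChaMapExt}, using that $(E\mathbb{Z}_p\otimes I)_k$ is projective and $H_k(I^{\otimes p})=0$ for $k\ge 1$), whence a $\mathcal{R}^n_\theta$-linear map $\widehat{K}:E\mathbb{Z}_p\otimes I\otimes C^{\otimes p}\to D^{\otimes p}$. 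The passage from $\widehat{K}$ to the needed $\mathcal{R}^n_\theta$-chain homotopy is made via Theorem \ref{Thm:KhoTwiProj} (projectivity of $E\mathbb{Z}_p\otimes C^{\otimes p}$ over the skew group ring) together with Lemma \ref{Lem:HomAndDer} ($\mathrm{Hom}_{\mathcal K}\cong\mathrm{Hom}_{\mathcal D}$ for bounded-above complexes of projectives), and then Lemma \ref{Lem:SwitchTensor} to reinterpret the tensor with $C_{\mathrm{Kh}}(R)$ in the form needed for $C_{\mathrm{EKh}}$. None of this machinery appears in your proposal, and it cannot be bypassed by a purely combinatorial symmetrization argument: as you note yourself, averaging fails because $p$ is not invertible in $\mathbb{Z}$, and there is no direct equivariant cubical formula. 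You need either the paper's homotopy-coherent/derived-category argument or an equivalent replacement, and that is the step your proposal leaves as a gap.
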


Lastly, in the equivariant case, we show that there is an equivariant analogue of the neck cutting relation that holds in 
equivariant Khovanov homology (see section \ref{SSec:EquiNecCut}). Therefore, we conclude that,

\begin{IntroTheorem}\label{Thm:EquiRibbCon}
    Fix a smooth equivariant ribbon concordance $F$ between periodic knots $K$ and $K'$. The map induced by $F$ on 
	equivariant Khovanov homology is a split injection.
\end{IntroTheorem}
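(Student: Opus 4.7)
The strategy is to mimic the proof of Levine and Zemke \cite{LZ19} in the equivariant category, using equivariant functoriality (Theorem \ref{Thm:EquiFunct}) and the equivariant neck-cutting relation of section \ref{SSec:EquiNecCut} as the two main engines. First I would build a candidate left inverse for $\mathrm{EKh}(F)$. Reflecting the $[0,1]$-coordinate gives a cobordism $\bar{F}\colon K'\to K$; since $\tilde\theta$ acts trivially on $[0,1]$ and $F$ is $\tilde\theta$-invariant, $\bar{F}$ is again equivariant and disjoint from $\tilde U$, and the stacked cobordism $\bar F\circ F\colon K\to K$ is equivariant. Its Morse critical points come in cancelling pairs (births of $F$ against deaths of $\bar F$, and saddles of $F$ against saddles of $\bar F$), and these pairs are themselves organized into $\mathbb{Z}_p$-orbits of size $p$.

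The geometric heart of the argument is the next step: show that $\bar F\circ F$ is equivariantly isotopic (rel boundary) to the cylinder $K\times[0,1]$ connect-summed with an equivariant collection of $2$-spheres joined to the cylinder by small dual tubes. In the non-equivariant setting this is Zemke's ``doubling'' of a ribbon concordance, obtained by matching each birth of $F$ with a death of $\bar F$ along a disk created by the ribbon singularities. I would promote this to the equivariant setting by selecting the cancelling disks in $\mathbb{Z}_p$-orbits disjoint from the fixed annulus $\tilde U$, so the resulting spheres and tubes form a single $\mathbb{Z}_p$-invariant configuration; Theorem \ref{Thm:EquiMM} then produces an equivariant movie of this decomposition.

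Once the geometric decomposition is in place, applying the equivariant neck-cutting relation from section \ref{SSec:EquiNecCut} to each tube expands $\mathrm{EKh}(\bar F\circ F)$ as a sum; summands in which some sphere receives no dot vanish, while those in which every sphere is dotted contribute a scalar and leave the cylinder intact. What remains is a unit multiple of the identity on $\mathrm{EKh}(K;\mathbb{Z})$, possibly modified by a correction that is nilpotent in the equivariant parameter. Combined with Theorem \ref{Thm:EquiFunct} this gives $\mathrm{EKh}(\bar F)\circ\mathrm{EKh}(F)=(\pm 1)^p\,\mathrm{EKh}(\bar F\circ F)$, which is invertible; hence $\mathrm{EKh}(\bar F)$ is a left inverse for $\mathrm{EKh}(F)$ and the latter is a split injection.

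The main obstacle I anticipate is the geometric step: arranging Zemke's decomposition $\mathbb{Z}_p$-equivariantly and in the complement of $\tilde U$. Because the ribbon condition provides only index-$0$ and index-$1$ critical points, the equivariant isotopy must rearrange and cancel them in orbits of size $p$ without crossing the fixed annulus, which requires checking that the ambient isotopies and handle slides underlying Zemke's argument admit simultaneous $\mathbb{Z}_p$-equivariant realizations disjoint from $\tilde U$. A secondary point is tracking the $(\pm 1)^p$ sign from Theorem \ref{Thm:EquiFunct} through the composition, but this only rescales the left inverse by a unit and does not affect splittability; once the equivariant geometric input is secured, the remainder of the argument is a formal consequence of equivariant functoriality and equivariant neck-cutting.
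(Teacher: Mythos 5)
Your proposal follows essentially the same route as the paper: stack $F$ with its reverse $F^{\mathrm{op}}$, use Theorem~\ref{Thm:EquiFunct} to equivariantly reorganize the resulting self-cobordism of $K$ into an equivariant collection of unknotted $2$-spheres tubed to the cylinder $K\times[0,1]$, apply the equivariant neck-cutting relation, and observe that every summand with an undotted sphere dies while the unique surviving term is $\pm\mathrm{id}$. The only imprecision is your hedge about a possible nilpotent correction — the paper's $\mathbb{Z}_p$-orbit bookkeeping of dot placements shows every mixed term contains an undotted sphere and so vanishes outright, leaving exactly $\pm\mathrm{id}_{\mathrm{EKh}(K)}$ with no correction — but this does not affect the conclusion.
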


\paragraph{Organization:} In section \ref{Sec:Background}, we study the periodic links, the Khovanov invariant for tangles, 
and the results about functoriality of Khovanov homology. Moreover, we restate the construction of equivariant Khovanov homology for periodic links. In section \ref{Chp:Result}, we prove the functoriality of the equivariant Khovanov homology. Lastly, in section \ref{Sec:EquConcObs} we show that the equivariant Khovanov homology obstructs to equivariant ribbon concordance.

\paragraph{Acknowledgment:} Author would like to thank Robert Lipshitz for his constant support through the completion of 
this manuscript. Also, the Author thanks Taylor Lawson for his helpful assist on the proof of the theorem \ref{Thm:EquiFunct}.

\section{Background}\label{Sec:Background}
In this section we will introduce some background on Khovanov invariant for tangles that we will use throughout the arguments in this paper. We also restate the construction of equivariant Khovanov homology for periodic links. Lastly, we review the skew group algebras and their modules.

\subsection{Tangles and diskular tangles}\label{SSec:DiskTang}
Our approach to the tangle decomposition of a link is the frameworks introduced in \cite{Bar05,LLS21}. We utilize the \textit{planar arc diagram} introduced in \cite[\S5]{Bar05}. This coincides with \textit{diskular tangles} in \cite{LLS21}. We begin by summarizing some of the definitions and theorems of \cite{LLS21}.

Let $\mathbb{D}$ be the standard closed disk centered at the origin in $\mathbb{C}$ and $\mathbb{D}_i\subseteq \mathbb{D}$ be disjoint 
open sub-disks for $i=1,...,k$ such that $\overline{\mathbb{D}}_i\subset \overset{\circ}{\mathbb{D}}$. A $k$-\textit{punctured disk} $\mathcal{D}$ 
is
\begin{equation}
	\mathcal{D}=\mathbb{D}\backslash \big({\mathbb{D}}_1\cup{\mathbb{D}}_2\cup\cdots\cup{\mathbb{D}}_k\big)
\end{equation}
We can partition the boundary of $\mathcal{D}$ in to two sets: The outer boundary $\partial \mathbb{D}=S^1$ and $k$ inner boundaries 
$\partial_i\mathcal{D}=\partial\mathbb{D}_i$ for $i=1,...,k$. Up to scaling and translating, any one of those inner boundaries is the unit 
circle $S^1=\{z\in\mathbb{C}\,\vert \,\lvert z\rvert=1\}$. Hence, we treat all boundary components as the unit circle. For an arbitrary 
non-negative integer $m$, a set of $m$-\textit{marked points} on a unit circle consists of $m$ points 
$\{p_j\,\vert\,p_j\neq 1, j=1,...,m\}\subset \partial\mathbb{D}$. For example, we can choose 
$\displaystyle{\{p_{1}=e^{\frac{2\pi i}{m+1}},...,\;p_{m}=e^{\frac{2\pi i m}{m+1}}\}}$ i.e., non-trivial roots of unity as our 
$m$-marked points. 

For a choice of boundary components of $\mathcal{D}$ and any 
positive integer $m$, we can set $m$-marked points on that boundary component. The importance of such choices will be more clear in the gluing process.

\begin{Definition}\label{Def:DiskTang}
	An $(m_1,...,m_k;n)$-\textit{diskular tangle} $T=T^{(m_1,...,m_k;n)}
$ is a diagram of a tangle in a thickened $k$-punctured disk $\mathcal{D}$ with the following information:
	\begin{enumerate}[label=(DT\arabic*)]
		\item The $n,m_1,...,m_k$ are fixed, positive, even integers.
		\item There are $m_i$ marked points on $\partial_i\mathcal{D}$ ($i$th inner boundary of $\mathcal{D}$) and $n$ marked points on the outer 
		boundary of $\mathcal{D}$.
		\item The tangle diagram $T$ consist of finitely many immersed circles and $\frac{1}{2}(n+m_1+...+m_k)$ arcs in $\mathcal{D}$ with the 
		boundary of the arcs on the marked points. 
		\item The arcs are perpendicular (making the right angle with the tangent vectors to $\partial \mathcal{D}$ at marked points) near the 
		boundary of $\mathcal{D}$. 
	\end{enumerate}
\end{Definition}

Figure \ref{Disktangcompo} (a) and (b) depicts two diskular tangles. If there are no inner boundaries, and $n$-marked points on the outer boundary, we denote the diskular tangle by $T^{(;n)}$. By contrast, $T^{(0,...,0;m)}$ would imply that there are inner boundaries with no marked points, and one outer boundary with $m$-marked points.

Two diskular tangles $S=S^{(l_1,...,l_j;m_i)}$ and $R=R^{(m_1,...,m_i,...,m_k;n)}$ can be composed by gluing $S$ from its outer boundary to 
the $i$th inner boundary of $R$ to form a\break $(m_1,...,m_{i-1},l_1,...,l_j,m_{i+1},...,m_k;n)$-diskular tangle denoted by $R\circ_i S$. 
The reader should note that here the subscript $\circ_i$ specifies that we glue $S$ into $R$'s $i$th boundary. In the case of unoriented tangles, 
to be able to get a diskular tangle from $R\circ_i S$ we need the number of marked points on the $i$th inner boundary of $R$, $m_i$, to be equal to the number of marked points on the outer boundary of $S$. However, if one works with oriented tangles, then the orientation of arcs in $R$ that 
meet the $\partial_iR$ must match with the orientation of arcs that start or terminate on the outer boundary of $S$. Gluing can be made canonical 
if we identify $\partial_iR$ with $ \partial_{\text{out}}S$ by scaling and translating of $S$ and $R$ (Figure \ref{Disktangcompo} (c)).

\begin{figure}[ht]
\centering
\begin{subfigure}[c]{0.3\textwidth}
	\def\svgwidth{1\textwidth}
	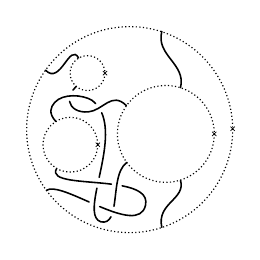

    \vspace{-0.5cm}\caption{}
\end{subfigure}
\begin{subfigure}[c]{0.3\textwidth}
	\def\svgwidth{1\textwidth}
	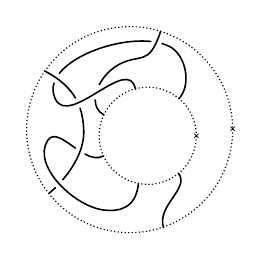

    \vspace{-0.5cm}\caption{}
\end{subfigure}
\begin{subfigure}[c]{0.3\textwidth}
	\def\svgwidth{1\textwidth}
	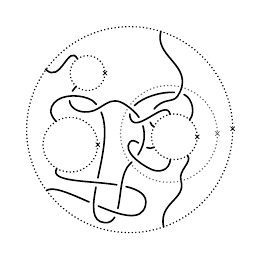

    \vspace{-0.5cm}\caption{}
\end{subfigure}
\caption[Gluing diskular tangles]{\small (a) Shows a diskular tangle $T^{(2,2,4;4)}$ and (b) shows a diskular tangle $T^{(6;4)}$. (c) depicts 
the composition $T^{(2,2,4;4)}\circ_3 T^{(6;4)}$ which is a $(2,2,6;4)$-diskular tangle.}
\label{Disktangcompo}
\end{figure}

Similarly, the $(m_1,\cdots,m_k;n)$-\textit{flat} diskular tangles are defined similarly to definition \ref{Def:DiskTang} but we require a 
smooth and proper embedding of $\frac{1}{2}(n+m_1+\cdots+m_k)$ arcs and finitely many circles in a $k$-punctured disk $\mathcal{D}$. 
We denote by $B^{(m_1,\cdots,m_k;n)}$ the collection of $(m_1,...,m_k;n)$-flat diskular tangles. 

\begin{Definition}\label{Def:FlaDisTang}
    The $(m_1,\cdots,m_k;n)$-\textit{flat diskular tangles} are defined similarly to definition \ref{Def:DiskTang} (see \cite[section 4.2.2]{LLS21}) but we require a smooth and proper embedding of $\frac{1}{2}(n+m_1+\cdots+m_k)$ arcs and finitely many circles in a $k$-punctured disk $\mathcal{D}$. We denote by $B^{(m_1,\cdots,m_k;n)}$ the collection of $(m_1,...,m_k;n)$-flat diskular tangles.
\end{Definition}

The collection of $(m;n)$-flat diskular tangles in the annulus $A\subset \mathbb{C}$ with no closed components and arcs that avoid the ray 
$r_1=\{(0,t)\;\vert\, t\in [0,+\infty)\}\cap A$ up to ambient isotopy of $\mathcal{D}$ relative to the boundary is denoted by $\boldsymbol{B}^{(m;n)}$. We use $1\in S^1\subset \mathbb{D}$ as the base point. 
\begin{Remark}\label{Rmk:BasePoint1}
    The choice of $1\in S^1$ as the base point on the boundaries of $\mathcal{D}$ might appear excessive to the reader as we only allow tangle 
	diagrams to be scaled or translated. This choice of base point is indeed extra here. However, it will be used in section \ref{SSec:KhovanovInv}.
\end{Remark}

In \cite{Kho02} tangles has been considered in $\mathbb{R}\times[0,1]$ with boundary on $\mathbb{N}\times\{0,1\}$. After scaling one can consider tangle diagrams in $[0,1]\times[0,1]$ with boundary of the arcs on $\{\frac{1}{n+1},\cdots\frac{n}{n+1}\}\times\{0\}$ and $\{\frac{1}{m+1},\cdots\frac{m}{m+1}\}\times\{1\}$. Identifying the right and left side of the square $[0,1]^2$ will transform the square into an annulus. 
This also provides a one-to-one correspondence between tangle diagrams in $[0,1]^2$ and diskular tangles ${T}^{(m;n)}$ in the annulus 
$A=S^1\times [0,1]$ that do not intersect the ray $\{1\}\times[0,1]$. Given an $(m;n)$-flat diskular tangle $R\in B^{(m;n)}$ we can reflect 
the arcs radially around the middle circle in the annulus and denote the resulting flat diskular tangle  $\widehat{R}\in B^{(m;n)}$ 
(figure \ref{Fig:FlatDiskTang}). 

\begin{figure}[ht]
	\centering
    \begin{subfigure}[c]{0.25\textwidth}
	\def\svgwidth{0.9\textwidth}
	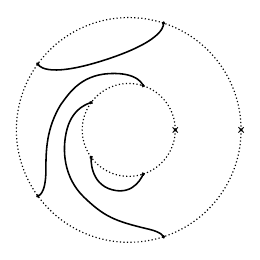

        \caption{}
    \end{subfigure}
    \begin{subfigure}[c]{0.25\textwidth}
	\def\svgwidth{0.9\textwidth}
	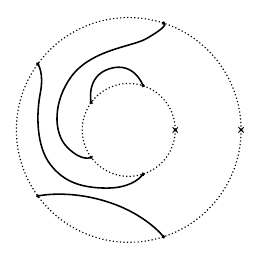

        \caption{}
    \end{subfigure}
	\caption[Example of flat diskular tangles]{\small (a) Shows a flat ${(4;4)}$-diskular tangle and (b) shows the reflection of same flat 
	diskular tangle ${\widehat{T}^{(4;4)}}$.}
	\label{Fig:FlatDiskTang}
\end{figure}

\begin{Definition}\label{Def:SimpCob}
	Given two $(m_1,...,m_k;n)$-diskular tangle $S,T$, by an \emph{elementary cobordism} ${\mathfrak{s}: S\rightarrow T}$, we mean any of following:
\begin{itemize}
	\item A  planar isotopy $f_t:\mathcal{D}\rightarrow \mathcal{D}$ of $k$-punctured disks such that $f_t$ restricted to the boundary is 
	the identity for all $t$.
	\item Any of the Reidemeister moves away from the boundary.
	\item Any of the Morse moves (birth, death, and adding a saddle). 
\end{itemize}
\end{Definition}

In \cite[ Lemma 4.7]{LLS21}, the authors showed that collection of diskular tangles forms a multicategory $\mathbb{T}$ enriched in categories. 
In this multicategory, an unoriented $(m_1,...,m_k;n)$-diskular tangle $S=S^{(m_1,\cdots,m_k;n)}$ is a multi-morphism between objects 
$m_1, m_2,\cdots, m_k$ and $n$ i.e., $S\in \text{Hom}_{\mathbb{T}}(m_1,\cdots, m_k;n)$. The identity morphism is the tangle 
$\text{Rad}_n\in \text{Hom}_{\mathbb{T}}(n;n)$ which is the radial crossingless matching of $n$-marked points on the outer boundary and the 
$n$-marked points on the inner boundary. In this setting, any elementary cobordism (definition \ref{Def:SimpCob}) between 
$(m_1,...,m_k;n)$-diskular tangles $S,T\in \text{Hom}_{\mathbb{T}}(m_1,...,m_k;n)$, is a 2-morphisms in $\mathbb{T}$.

In light of the categorical refinement of diskular tangles in \cite[Section 4]{LLS21}, we denote the collection of diskular tangles for 
fixed integers $m_1,...,m_k,n$ by $\mathbb{T}^{(m_1,\cdots,m_k;n)}$. For instance, instead of writing the $S=S^{(m_1,\cdots,m_k;n)}$ 
we will write $S\in \mathbb{T}^{(m_1,\cdots,m_k;n)}$ to denote an $(m_1,\cdots,m_k;n)$-diskular tangle.

\begin{figure}[ht]
	\centering
	\def\svgwidth{0.7\textwidth}
\begingroup%
  \makeatletter%
  \providecommand\color[2][]{%
    \errmessage{(Inkscape) Color is used for the text in Inkscape, but the package 'color.sty' is not loaded}%
    \renewcommand\color[2][]{}%
  }%
  \providecommand\transparent[1]{%
    \errmessage{(Inkscape) Transparency is used (non-zero) for the text in Inkscape, but the package 'transparent.sty' is not loaded}%
    \renewcommand\transparent[1]{}%
  }%
  \providecommand\rotatebox[2]{#2}%
  \newcommand*\fsize{\dimexpr\f@size pt\relax}%
  \newcommand*\lineheight[1]{\fontsize{\fsize}{#1\fsize}\selectfont}%
  \ifx\svgwidth\undefined%
    \setlength{\unitlength}{309.60001373bp}%
    \ifx\svgscale\undefined%
      \relax%
    \else%
      \setlength{\unitlength}{\unitlength * \real{\svgscale}}%
    \fi%
  \else%
    \setlength{\unitlength}{\svgwidth}%
  \fi%
  \global\let\svgwidth\undefined%
  \global\let\svgscale\undefined%
  \makeatother%
  \begin{picture}(1,0.31395351)%
    \lineheight{1}%
    \setlength\tabcolsep{0pt}%
    \put(0,0){\includegraphics[width=\unitlength,page=1]{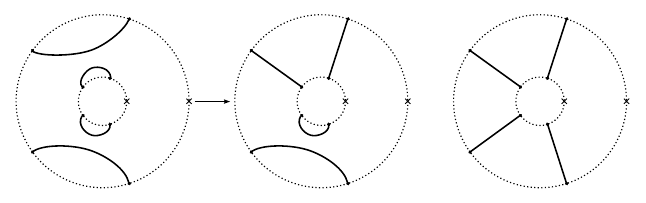}}%
    \put(0.31371123,0.16860468){\color[rgb]{0,0,0}\makebox(0,0)[lt]{\lineheight{1.25}\smash{\begin{tabular}[t]{l}$\mathfrak{s}$\end{tabular}}}}%
    \put(0,0){\includegraphics[width=\unitlength,page=2]{Saddle1.pdf}}%
    \put(0.65285851,0.16860468){\color[rgb]{0,0,0}\makebox(0,0)[lt]{\lineheight{1.25}\smash{\begin{tabular}[t]{l}$\mathfrak{s}$\end{tabular}}}}%
  \end{picture}%
\endgroup%

    \vspace{-0.5cm}\caption[Example of a sequence of elementary cobordisms]{\footnotesize Composition of two Morse saddle elementary cobordism between 
	the $(4;4)$-flat diskular tangle $R$ on the left, and $(4;4)$-diskular tangle $\text{Rad}_4$ on the right.}
    \label{Fig:SadCob}
\end{figure} 

\subsection{Periodic links and tangles}\label{SSec:PeriodKnots}
In this section we study the $p$-periodic links in more depth and introduce periodic tangle decompositions 
for $p$-periodic links. 

For simplicity, we use the term periodic links instead of $p$-periodic link when it will not cause confusion.

Periodic links have periodic diagrams. That is to say, there is a generic position of the periodic link $L$ such that the image of $L$ by 
the projection $\mathbb{R}^3\backslash \boldsymbol{z}\rightarrow\mathbb{R}^2\backslash\boldsymbol{0}$ denoted by $D\subset \mathbb{R}^2$, 
is a link diagram that misses the origin, and is taken to itself by $\frac{2\pi}{p}$ rotation  around the origin. Analogous to ordinary 
links, one can define the equivariant Reidemeister moves as transformations on a diagram of a periodic link.
\begin{Definition}
 A \emph{$p$-equivariant Reidemeister move} means applying a Reidemeister move to a periodic link diagram of a periodic link, localized on 
 $p$ disjoint, closed disks in $\mathbb{R}^2\backslash\boldsymbol{0}$, so that disks and Reidemeister moves correspond under the action $\mathbb{Z}_p$. 
\end{Definition}
Figure \ref{fig:EquRM} depicts an example of a 5-equivariant Reidemeister move. Given a $p$-periodic link $L$ in $\mathbb{R}^3\backslash\boldsymbol{z}$ 
(similarly in $S^3\backslash U$, where $U$ is the fixed unknot), the quotient link $\overline{L}=L/\mathbb{Z}_p$ is a link in 
$\mathbb{R}^3\backslash \boldsymbol{z}$ (respectively in $S^3\backslash U$). Throughout this paper, an overline indicates the quotient of a periodic 
link by $\mathbb{Z}_p$. Both $L$ and $\overline{L}$ can be considered as annular links by considering them as links in $\mathbb{R}^3\backslash \boldsymbol{z}$. 
The projection to the $xy$-plane will provide an annular link diagram of $L$ and $\overline{L}$ in an annulus centered at origin of $\mathbb{R}^2$. 

\begin{figure}[ht]
	\centering
    \begin{subfigure}[c]{0.4\textwidth}
	\def\svgwidth{1\textwidth}
	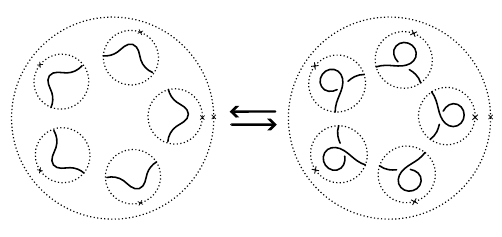

        \vspace{-0.5cm}\caption{}
    \end{subfigure}
    \begin{subfigure}[c]{0.4\textwidth}
	\def\svgwidth{1\textwidth}
	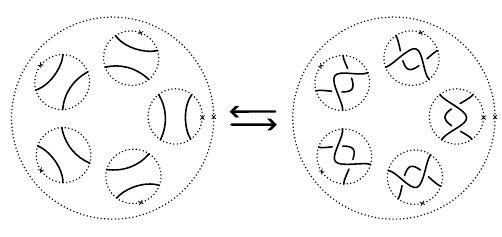

        \vspace{-0.5cm}\caption{}
    \end{subfigure}
	\caption[Equivariant Reidemeister move]{(a) Equivariant type I Reidmeister move and (b) Equivariant type II Reidmeister moves for a $5$-periodic link.}
	\label{fig:EquRM}
\end{figure}
\begin{Theorem}
Two periodic links $L_0$ and $L_1$ represented by periodic link diagrams $D_0$ and $D_1$ are \textit{equivariantly isotopic} if and only if $D_0$ is obtained 
from $D_1$ by a finite sequence of equivariant Reidemeister moves. 
\end{Theorem}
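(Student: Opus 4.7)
The forward implication is the straightforward direction: each equivariant Reidemeister move is, by definition, a synchronous application of an ordinary Reidemeister move on $p$ disjoint disks forming a $\mathbb{Z}_p$-orbit in $\mathbb{R}^2\setminus\boldsymbol{0}$. Each such local move is realized by an ambient isotopy supported in a small neighborhood of the affected disk in $\mathbb{R}^3\setminus\boldsymbol{z}$, and running these $p$ isotopies simultaneously yields an ambient isotopy commuting with $\tilde\theta$. Concatenating over a finite sequence of equivariant Reidemeister moves gives the required equivariant isotopy.

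For the reverse implication, my plan is to descend to the quotient. Because the $\mathbb{Z}_p$-action on $\mathbb{R}^3\setminus\boldsymbol{z}$ is free, the quotient map $q\colon \mathbb{R}^3\setminus\boldsymbol{z}\to (\mathbb{R}^3\setminus\boldsymbol{z})/\mathbb{Z}_p$ is a $p$-fold covering of smooth manifolds, and similarly the planar quotient $\mathbb{R}^2\setminus\boldsymbol{0}\to (\mathbb{R}^2\setminus\boldsymbol{0})/\mathbb{Z}_p$ is a covering. A given equivariant isotopy $H_t$ from $L_0$ to $L_1$ descends to a smooth ambient isotopy $\overline{H}_t$ between the quotient links $\overline{L}_0$ and $\overline{L}_1$, and the periodic diagrams $D_0,D_1$ descend to annular link diagrams $\overline{D}_0,\overline{D}_1$ in the quotient annulus. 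Applying the classical Reidemeister theorem for links in a thickened annulus, $\overline{D}_0$ and $\overline{D}_1$ differ by a finite sequence of Reidemeister moves, each supported in a small closed disk $\delta_j\subset \mathbb{R}^2\setminus\boldsymbol{0}$ downstairs.

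The final step is to lift each such downstairs move to an equivariant move upstairs. By shrinking $\delta_j$ if necessary, I may assume each $\delta_j$ is evenly covered, so $q^{-1}(\delta_j)$ is a disjoint union of $p$ closed disks in $\mathbb{R}^2\setminus\boldsymbol{0}$ permuted cyclically by $\mathbb{Z}_p$. The local Reidemeister move on $\delta_j$ pulls back, via the covering, to the same Reidemeister move performed simultaneously on each of these $p$ disks, which is exactly an equivariant Reidemeister move in the sense of the definition. Chaining these lifts produces a sequence of equivariant Reidemeister moves taking $D_0$ to $D_1$.

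The main obstacle is ensuring the classical Reidemeister theorem is available downstairs in the correct form, so that each local move is supported in a topologically small disk of $\mathbb{R}^2\setminus\boldsymbol{0}$ that is evenly covered. This is secured by an annular general-position argument: one projects $\overline{H}_t$ to the quotient plane, perturbs to put the one-parameter family of diagrams in generic position so that non-generic times correspond to isolated type I, II, or III events localized at a single point in $\mathbb{R}^2\setminus\boldsymbol{0}$, and shrinks the supporting disks if needed so that each is contained in a fundamental domain for the covering. Once this is arranged, the lifting step is automatic from covering-space theory, and combining with the forward direction completes the proof.
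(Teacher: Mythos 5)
Your argument is correct and follows essentially the same route as the paper: both descend to the quotient annular link diagram, invoke the Reidemeister theorem for links in a thickened annulus (the paper cites \cite[Theorem 1]{HP89}), and pull the localized moves back through the $p$-fold cover to obtain equivariant Reidemeister moves. Your write-up supplies some details the paper leaves implicit (the forward direction, the evenly-covered-disk observation for the lift), but the underlying idea is the same.
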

\begin{proof}
	Consider the quotient link diagram $\overline{D}_0$. Any annular Reidemeister move on $\overline{D_0}$ can be pulled back to an equivariant Reidemeister 
    move. Therefore, the theorem follows from the similar result in annular links topology \cite[Theorem 1]{HP89}. 
\end{proof}

The action of $\mathbb{Z}_p$ on $\mathbb{R}^3$ (respectively $S^3$) denoted by $\theta$, can be extended to an action on $\mathbb{R}^3\times[0,1]$ (respectively $S^3\times [0,1]$) if we let the action be trivial on $[0,1]$. This extended action has fixed annulus $U\times [0,1]$. We will use this extended action in section \ref{SSec:EquiFunct}. We also introducing a notion of admissible periodic diskular tangles which we will use in the proof of theorem \ref{Thm:EquiFunct}. 

\begin{Definition}\label{Def:PerDisTan}
   An $(m_1,...,m_k;n)$-diskular tangle $E$ is an \emph{admissible} $p$-\textit{periodic diskular tangle}, if the followings hold.
   \begin{enumerate}[label=(PD\arabic*)]
    \item Each inner boundary component has a distingueshed point $p_0\in \partial_i\mathcal{D}$ as the base point. Also, the base points on inner boundaries correspond under the action of $\mathbb{Z}_p$. 
    \item We have $n=0$, i.e., there are no arcs in the diagram of $E$ with boundary on $\partial_{out }\mathcal{D}$, the outer boundary of $\mathcal{D}$ for a $k$ punctured disk $\mathcal{D}$ centered at origin in $\mathbb{R}^2$.
    \item\label{Itm:equal} We have $k=p$ and the $\mathbb{Z}_p$ acts on the set of inner boundaries $\partial_{in}\mathcal{D}=\{\partial_i\mathcal{D}\}_{i\in \mathbb{Z}_p}$ by cyclic permutation, i.e., $\theta(\partial_i\mathcal{D})=\partial_{i+1}\mathcal{D}$ for $i\in \mathbb{Z}_p$.
    \item The arcs and circles in $E$ correspond under the rotation by $\mathbb{Z}_p$.
   \end{enumerate}
\end{Definition}

\begin{figure}[ht]
	\centering
    \begin{subfigure}[l]{0.3\textwidth}
	\def\svgwidth{1\textwidth}
	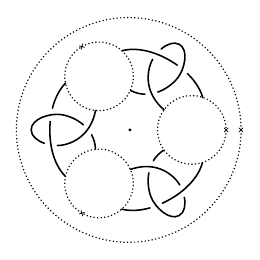

        \vspace{-0.5cm}
        \caption{}
    \end{subfigure}
    \begin{subfigure}[r]{0.3\textwidth}
	\def\svgwidth{1\textwidth}
	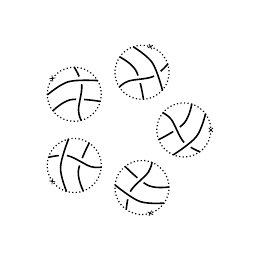

        \vspace{-0.5cm}
        \caption{}
    \end{subfigure}
	\caption[Example of periodic diskular tangles.]{\small (a) Shows an admissible 3-periodic diskular tangle $T^{(4,4,4;)}$. (b) Shows a 5-equivariant diskular tangle $(T_0,...,T_4)$ with $T_0,=\cdots=T_4\in\mathbb{T}^{(;6)}$.}
	\label{Fig:EquiFlatDiskTang}
\end{figure}

A few notes about the definition \ref{Def:PerDisTan}: 
\begin{enumerate}
    \item In \ref{Itm:equal}, the requirement $k=p$ could be generalized to $k$ being divisible by $p$. Then the inner boundaries 
    $\partial_{in}\mathcal{D=}\{\partial_i\mathcal{D}\}_{i=1}^k$ can be partitioned into subsets $\sqcup_{l=1}^p\mathcal{I}_l$ 
    that correspond by cyclic permutation by action of $\mathbb{Z}_p$. However, this generalization is not necessary for the proof of the main results in this paper. Moreover, \ref{Itm:equal} implies that the number of marked points on inner boundaries are equal, i.e., $m_{1}=...=m_{k}$. Moreover, the action of $\mathbb{Z}_p$ maps the marked points on $\partial_i\mathcal{D}$ bijectively to marked points on $\partial_{i+1}\mathcal{D}$.
    \item In contrast to definition \ref{Def:DiskTang}, for each $i=1,...,k$ there are $m_i+1$ points on the inner boundary $\partial_i\mathcal{D}$. One of the $m_i+1$ points is set to be the base point.
\end{enumerate}

\begin{Definition}\label{Def:EquiDiskTang}
    By a $p$-\emph{equivariant diskular tangle}, we refer to $p$ disjoint copies of a diskular tangles 
    $T_0,...,T_{p-1}\in \mathbb{T}^{(m_1,...,m_k;n)}$ in $\mathbb{R}^2\backslash\boldsymbol{0}$ such that
    \begin{itemize}
        \item Each boundary component has a distingueshed base point $p_0\in \partial_i\mathcal{D}$.
        \item We have the action $\mathbb{Z}_p$ on $\{T_j\}_{j\in \mathbb{Z}_p}$ with $\theta(T_j)=T_{j+1}$. 
    \end{itemize}
\end{Definition}

We can extended the notion of elementary cobordims (definition \ref{Def:SimpCob}) to periodic diskular tangles. 
\begin{Definition}\label{Def:EquEleCob}
    A $p$-\emph{equivariant elementary cobordism} $\tilde{\mathfrak{s}}$ between $p$-periodic diskular tangles $E$ and $E'$, also denoted by 
    $\tilde{s}:E\rightarrow E'$, is applying an elementary cobordism to $E$ localized to $p$ disjoint disks away from the center of $E$ such that 
    the elementary cobordism correspond under the under the rotation by $\mathbb{Z}_p$.
\end{Definition}

\subsection{Khovanov invariant for tangles}\label{SSec:KhovanovInv}
In this section, we review the construction of the Khovanov invariant for diskular tangles (section \ref{SSec:DiskTang}).

For a unital ring $\mathbb{F}$, let $\mathcal{A}=\mathbb{F}[\boldsymbol{X}]/(\boldsymbol{X}^2)$ which is a Frobenius algebra by defining a comultiplication and counit \cite{Bar05,Kho00,Kho02}. Also, $\mathcal{A}$ can be considered as a graded $\mathbb{F}$-module by choosing $\text{gr}_q(\boldsymbol{1})=-1$ and $\text{gr}_q(\boldsymbol{X})=1$. For instance, the choice $\text{gr}_q(\boldsymbol{1})=0$ and $\text{gr}_q(\boldsymbol{X})=2$ makes $\mathcal{A}$ a graded $\mathbb{F}$-algebra. In the literature, $\text{gr}_q$ is referred to as the \textit{quantum grading} \cite{Kho00,Bar05}. 

To construct the Khovanov chain complex we use the \emph{Khovanov TQFT} denoted by $\mathcal{F}:\mathcal{C}ob^{1+1}\rightarrow \mathbb{F}-\text{Mod}$ (see \cite{Kho00}) induced by $\mathcal{A}$. Here, $\mathcal{C}ob^{1+1}$ is the category consisting of closed 1-dimensional 
manifolds as objects, and surfaces with boundary as morphisms.

\begin{Definition}\label{Def:ArcAlg}
    Let $\mathbb{F}$ be as above. For an even positive integer $n$, we define the \emph{arc algebra} $\mathcal{H}^n$ as follows. 
	\begin{equation}\label{Equ:ArcAlg}
			\mathcal{H}^n=\bigoplus_{a,b\in \boldsymbol{B}^{(0;n)}}\mathcal{F}(\hat{a}\circ b)\{n/2\}
	\end{equation}
where $\boldsymbol{B}^{(0;n)}$ denotes the collection of isotopy classes of flat $(0;n)$-diskular tangles with no closed components (section \ref{SSec:DiskTang}), and $\mathcal{F}$ is the Khovanov TQFT functor.
\end{Definition}

We use a convention similar to \cite{Bar05} for grading shift operations. For a bigraded $\mathbb{F}$-module 
${\displaystyle A=\bigoplus_{(i,j)\in \mathbb{Z}\oplus\mathbb{Z}} A^{i,j}}$, we denote by $[n]$ and $\{m\}$ the 
following shifts in the bigrading.
\begin{equation}
        (A\{m\}[n])^{i,j} = A^{i-n,j-m}. 
\end{equation}

\begin{Remark}\label{Rmk:BasePoint}
    The condition that arcs in $\boldsymbol{B}^{(0;n)}$ do not intersect the ray $r_1=\{1\}\times[0,1]$, is used to ensure that the arc algebra $\mathcal{H}^n$ 
    defined above is identical to the ring $H^n$ defined in \cite[section 2.4]{Kho02}.
\end{Remark}
Now we can introduce the construction of Khovanov invariant for diskular tangles.
\begin{Lemma}\label{Lem:ArcAlgRing}\cite{LLS21}
    The $\mathbb{F}$-module $\mathcal{H}^n$ is an associative ring.     
\end{Lemma}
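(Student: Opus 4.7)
The plan is to transport Khovanov's construction of the arc algebra from \cite[\S 2.4]{Kho02} to the diskular-tangle setup, using functoriality of the TQFT $\mathcal{F}$ to deduce the ring axioms from isotopies of surfaces.

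First I would define the multiplication. For a triple $a,b,c\in \boldsymbol{B}^{(0;n)}$, declare
\begin{equation*}
\mu_{a,b,c}:\mathcal{F}(\hat{a}\circ b)\otimes \mathcal{F}(\hat{b}\circ c)\longrightarrow \mathcal{F}(\hat{a}\circ c)
\end{equation*}
to be $\mathcal{F}(\Sigma_{a,b,c})$ composed with the monoidal identification $\mathcal{F}(X)\otimes \mathcal{F}(Y)\cong \mathcal{F}(X\sqcup Y)$, where $\Sigma_{a,b,c}:\hat{a}\circ b\sqcup \hat{b}\circ c\to \hat{a}\circ c$ is the canonical cobordism built by placing the $b$-portion of $\hat{a}\circ b$ adjacent to the $\hat{b}$-portion of $\hat{b}\circ c$, which produces $n/2$ bigons, and then performing $n/2$ simultaneous saddles, one on each bigon. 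A planar isotopy of the result identifies it with $\hat{a}\circ c$. On summands where the middle index does not match $\mu$ is declared zero. Bilinearity is automatic, and a short computation with the saddle-map degrees on $\mathcal{A}$ shows that the shift $\{n/2\}$ in equation~\eqref{Equ:ArcAlg} is exactly what is needed for $\mu$ to have total bidegree zero.

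For associativity, I would compare the two composite cobordisms
\begin{equation*}
\Sigma_{a,c,d}\circ\bigl(\Sigma_{a,b,c}\sqcup\mathrm{id}_{\hat{c}\circ d}\bigr)\qquad\text{and}\qquad \Sigma_{a,b,d}\circ\bigl(\mathrm{id}_{\hat{a}\circ b}\sqcup\Sigma_{b,c,d}\bigr)
\end{equation*}
from $\hat{a}\circ b\sqcup \hat{b}\circ c\sqcup \hat{c}\circ d$ to $\hat{a}\circ d$. Both consist of the same $n$ saddles --- the $n/2$ at the $b$--$\hat{b}$ bigons and the $n/2$ at the $c$--$\hat{c}$ bigons --- localized in disjoint planar regions and performed in two different temporal orders. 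Since elementary cobordisms localized in disjoint disks commute up to ambient isotopy rel boundary, the two composite surfaces are isotopic, and functoriality of $\mathcal{F}$ yields the associativity identity $\mu_{a,c,d}\circ(\mu_{a,b,c}\otimes\mathrm{id})=\mu_{a,b,d}\circ(\mathrm{id}\otimes\mu_{b,c,d})$.

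The unit is $\mathbf{1}=\sum_a \mathbf{1}_a$, where $\mathbf{1}_a\in\mathcal{F}(\hat{a}\circ a)$ labels each of the $n/2$ circles of $\hat{a}\circ a$ by $\boldsymbol{1}\in\mathcal{A}$; the identity $\mathbf{1}_a\cdot y=y$ reduces, via $\mathcal{F}$, to $n/2$ parallel instances of the Frobenius identity $m(\boldsymbol{1}\otimes-)=\mathrm{id}_{\mathcal{A}}$. The main obstacle is the associativity step: one must describe $\Sigma_{a,b,c}$ geometrically enough that the \emph{disjoint saddles commute up to isotopy} argument produces a genuine ambient isotopy rel boundary of the two triple-product surfaces, rather than a mere combinatorial matching; once this geometric picture is in place, the ring axioms follow from functoriality of the Khovanov TQFT.
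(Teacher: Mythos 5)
The paper states this lemma with a citation to \cite{LLS21} and does not reproduce a proof, so there is no in-paper argument to compare against. Your sketch is a correct reconstruction of the standard argument (as in \cite[\S 2.5]{Kho02} and \cite{LLS21}): multiplication via the cobordism collapsing the $n/2$ bigons formed by the $b$--$\hat{b}$ returns, associativity from the fact that the two groups of $n/2$ saddles live in disjoint regions and hence yield diffeomorphic cobordisms, and the unit $\sum_a \boldsymbol{1}_a$ via the Frobenius identity $m(\boldsymbol{1}\otimes\,\cdot\,)=\mathrm{id}_{\mathcal{A}}$ applied circle by circle; your remark that the $\{n/2\}$ shift is exactly what renders $\mu$ degree-preserving is also correct once one fixes the paper's grading convention $\mathrm{gr}_q(\boldsymbol{1})=-1$, $\mathrm{gr}_q(\boldsymbol{X})=1$. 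This is the same approach the cited sources take.
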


First, to a flat $(m_1,...,m_k;n)$-diskular tangle $R$, we can assign a graded $\mathbb{F}$-module 
\begin{equation}\label{Equ:FlatModule}
	V(R)=\bigoplus_{a_i\in \boldsymbol{B}^{(0;m_i)},b\in\boldsymbol{B}^{(0;n)}} \mathcal{F}(\hat{b}\circ R\circ(a_1,...,a_k))\{\frac{n}{2}\}.
\end{equation}
where $\{\cdot\}$ denotes the shift in quantum grading $\mathrm{gr}_{q}$.

The $V(R)$ defined above, is an $(\mathcal{H}^{m_1}\otimes_{\mathbb{F}}\cdots \otimes_{\mathbb{F}}\mathcal{H}^{m_k},\mathcal{H}^{n})$-bimodule \cite[section 4]{LLS21}.

Fix an $(m_1,...,m_k;n)$-diskular tangle $T$ with $N$ crossings. For each $\alpha\in \underline{2}^N$, let $T^{\alpha}$ denote the $\alpha$-resolution of $T$ according to 
\begin{equation}\label{Equ:Resolution}
	\begin{tikzcd}[scale=0.2]
		\phantom{A} \arrow[r, bend right=55, no head] & \phantom{A}&\phantom{A}\arrow[rd, dash, ]   \arrow[l, shift left=15ex, "1"']& 
		\phantom{A}\arrow[ld, crossing over, dash]   \arrow[r,  shift left=-15ex, "0"]& \phantom{A}\arrow[d, bend left=70, no head, shift left=1ex] & 
		\phantom{A}\arrow[d, bend right=70,  no tail, no head] \\
		\phantom{A}\arrow[r, bend left=60, shift left=-1.5ex, no head] & \phantom{A}& \phantom{A}& \phantom{A}& \phantom{A}&\phantom{A}
	\end{tikzcd} 
\end{equation}
Therefore, $T^{\alpha}$ is  a flat $(m_1,...,m_k;n)$-diskular tangle and by (\ref{Equ:FlatModule}) we can assign a $(\mathcal{H}^{m_1}\otimes_{\mathbb{F}}\cdots \otimes_{\mathbb{F}}\mathcal{H}^{m_k},\mathcal{H}^{m})$-bimodule $V(T^{\alpha})$ to $T^{\alpha}$.

To build a tangle invariant for an $(m_1,...,m_k;n)$-diskular tangle $R$ with $N$ crossings, we assign a chain complex $C(R)=(C^{i}(R),d^i)$ of  $(\mathcal{H}^{m_1}\otimes_{\mathbb{F}}\cdots \otimes_{\mathbb{F}}\mathcal{H}^{m_k},\mathcal{H}^{m})$-bimodules to $R$ as follows
\begin{equation}
	C^{i}(R)=\bigoplus_{\substack{|\alpha|=i\\ \alpha\in \underline{2}^N}}\big(V(R^{\alpha})\big)\{|\alpha|+N_+-2N_-\}
\end{equation}
where $R^{\alpha}$ denotes the resolution of $R$ for $\alpha\in\underline{2}^{N}$, $\mathcal{F}$ is the Khovanov TQFT functor defined above, and $N_+$ and $N_-$ denote the number of positive and negative crossings respectively. We refer the reader to \cite{Kho00,Bar05} for the definition of the differentials $d^i:C^{i}(R)\rightarrow C^{i+1}(R)$. 

\begin{Definition}\label{Def:KhovTangInv}
The Khovanov invariant of the diskular tangle $R\in\mathbb{T}^{(m_1,\cdots,m_k;n)}$ is the chain complex
\begin{equation}
	C_{\text{Kh}}(R)=C(R)[N_-],
\end{equation}
where $(C(R),d)$ is defined above.
\end{Definition}

\noindent\textbf{Gluing process:} Fix an $(m_1,...,m_k,n_i)$-diskular tangle $T$ and an $(n_1,...,n_r,p)$-diskular tangle $S$, for $1\leq i\leq r$. The composition (gluing) $T\circ_i S$ corresponds to the following theorem.
\begin{Theorem}\cite[Lemma 4.16]{LLS21}\label{Thm:Glue}
For diskular tangles $S$ and $T$ as above, there is an isomorphism  
	\begin{equation}
		C_{\text{Kh}}(T)\otimes_{\mathcal{H}^{n_i}} C_{\text{Kh}}(S)\rightarrow C_{\text{Kh}}(T\circ_iS)
	\end{equation}
as $(\mathcal{H}^{n_1}\otimes_{\mathbb{F}}\cdots\otimes_{\mathbb{F}}\mathcal{H}^{n_{i-1}}\otimes_{\mathbb{F}}\big(\mathcal{H}^{m_1}\otimes_{\mathbb{F}}\cdots \otimes_{\mathbb{F}}\mathcal{H}^{m_k}\big)\otimes_{\mathbb{F}}\mathcal{H}^{n_{i+1}}\otimes_{\mathbb{F}}\cdots\otimes_{\mathbb{F}}\mathcal{H}^{n_r},\mathcal{H}^{p})$-bimodules.
\end{Theorem}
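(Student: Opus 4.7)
The plan is to build the isomorphism resolution-by-resolution, reduce to the case of flat diskular tangles, and then verify that the gluing map intertwines the Khovanov differentials and respects all grading shifts.

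First I would observe that the set of crossings of $T\circ_i S$ is canonically the disjoint union of the crossings of $T$ and the crossings of $S$. Consequently there is a bijection $\underline{2}^{N(T)}\times \underline{2}^{N(S)}\to \underline{2}^{N(T\circ_i S)}$, $(\alpha,\beta)\mapsto \alpha\sqcup \beta$, under which $(T\circ_i S)^{\alpha\sqcup\beta}=T^{\alpha}\circ_i S^{\beta}$. The counts of positive and negative crossings are additive, so the homological shift $[N_-]$ and the quantum shift $\{|\alpha|+N_+-2N_-\}$ for $T\circ_iS$ are the sums of the corresponding shifts for $T$ and $S$. Thus, once the flat (resolved) case is handled, the total complex identification follows by taking direct sums over all $(\alpha,\beta)$.

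The main step is therefore to show, for flat diskular tangles $T$ and $S$ glued along the $i$-th inner boundary of $T$, that
\begin{equation*}
V(T)\otimes_{\mathcal{H}^{n_i}} V(S)\;\xrightarrow{\;\cong\;}\;V(T\circ_i S)
\end{equation*}
as bimodules. The recipe is to unpack $V(T\circ_i S)=\bigoplus_{a_\bullet,b}\mathcal{F}\bigl(\hat b\circ (T\circ_i S)\circ(a_\bullet)\bigr)\{n/2\}$ and insert a complete set of crossingless matchings $c\in\boldsymbol{B}^{(0;n_i)}$ at the shared interface, using that $\sum_{c}\widehat c\circ c$ represents the unit of $\mathcal{H}^{n_i}$ up to idempotent summands. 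Then the closed 1-manifold $\hat b\circ (T\circ_i S)\circ(a_\bullet)$ splits along the interface circle as the result of gluing the closed manifold $\hat b\circ T\circ_i\widehat c\circ(\ldots)$ to $\hat c\circ S\circ(\ldots)$. Applying the Khovanov TQFT $\mathcal{F}$ and using that $\mathcal{F}$ sends gluing along a circle to tensor product over $\mathcal{F}(S^1)=\mathcal{A}$ (which by the very definition of $\mathcal{H}^{n_i}$ is exactly the tensor product over $\mathcal{H}^{n_i}$ after summing over $c$), produces the desired isomorphism. I would define the map explicitly on simple tensors by the gluing cobordism and check bilinearity with respect to the right $\mathcal{H}^{n_i}$-action on $V(T)$ and left $\mathcal{H}^{n_i}$-action on $V(S)$, so that it descends to $\otimes_{\mathcal{H}^{n_i}}$; its inverse is the decomposition by inserting $\sum_c \widehat c\circ c$.

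Next I would check that the isomorphisms above, assembled over all $(\alpha,\beta)$, intertwine differentials. The differential $d_{T\circ_i S}$ is a sum over crossings of $T\circ_i S$; edges coming from a crossing of $T$ modify only the $T$-resolution (and act on the $V(T)$-factor via the saddle TQFT), while those from a crossing of $S$ modify only the $S$-resolution. With the Koszul sign from the ordering of crossings, this matches $d_T\otimes 1 + (-1)^{\,\cdot\,}1\otimes d_S$ on the tensor product. Finally, the quantum shift $\{n_i/2\}$ attached to the $i$-th boundary in both $V(T)$ and $V(S)$ collapses correctly under $\otimes_{\mathcal{H}^{n_i}}$ to the single shift $\{n/2\}$ appearing in $V(T\circ_i S)$. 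The hard part will be making the bimodule identification of the flat case canonical and functorial in elementary cobordisms so that the compatibility with the saddle differentials is manifest; all other steps are essentially bookkeeping of gradings and crossings.
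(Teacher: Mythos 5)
The paper does not prove this statement; it cites it directly as \cite[Lemma~4.16]{LLS21} (which is the diskular-tangle analogue of \cite[Proposition~5]{Kho02}), so there is no proof in the paper to compare against. Your sketch follows the standard route from that literature: reduce to a single resolution by the bijection on crossings and additivity of crossing signs, prove the flat case by cutting along the interface and inserting the idempotent decomposition of the arc algebra, and then check that the saddle maps and Koszul signs make the identification a chain isomorphism. That plan is sound, and it is the approach the cited sources take.

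A few points are imprecise and, if taken literally, would not work. First, the phrase ``$\mathcal{F}$ sends gluing along a circle to tensor product over $\mathcal{F}(S^1)=\mathcal{A}$'' is not what is happening here and is not a property of the Khovanov TQFT in the sense you need: the interface circle $\partial_i$ is not a component of the closed $1$-manifold $\hat b\circ(T\circ_iS)\circ(a_\bullet)$; it meets that $1$-manifold transversally in $n_i$ points, so cutting along it produces $1$-manifolds with boundary, not closed ones. The correct mechanism is exactly the one you state in the previous sentence --- close up both sides with crossingless matchings $c,\hat c$ and compose with saddle cobordisms --- and the relevant tensor is over $\mathcal{H}^{n_i}$, not over $\mathcal{A}$. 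Second, the ``inverse by inserting $\sum_c\hat c\circ c$'' is not literally an inverse of the gluing map at the level of the plain tensor product $V(T)\otimes_{\mathbb{F}} V(S)$; verifying that the two maps are mutually inverse \emph{after} passing to $\otimes_{\mathcal{H}^{n_i}}$ is the crux, and it relies on the idempotent decomposition $1=\sum_c\iota_c$ of $\mathcal{H}^{n_i}$ together with $\iota_c\mathcal{H}^{n_i}\iota_{c'}\cong\mathcal{F}(\hat c\circ c')$; this is more than bookkeeping. Third, the grading remark is off: by equation (2.9) the shift $\{n/2\}$ in $V(R)$ is attached only to the outer boundary, so $V(S)$ carries $\{p/2\}$, not $\{n_i/2\}$; the balanced bookkeeping is $\{n_i/2\}+\{p/2\}-\{n_i/2\}=\{p/2\}$ coming from $V(T)$, $V(S)$, and the shift built into $\mathcal{H}^{n_i}$, matching $V(T\circ_i S)$.
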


Also, if we compose $(r_{i,1},\cdots, r_{i,l_i};m_i)$-diskular tangles $S_i$ for $i=1,...,k$ and $(m_1,...,m_k;n)$-diskular tangle $T$ to build $(r_{1,1},\cdots, r_{k,l_k};n)$-diskular tangle $T\circ(S_1,...,S_k)$, then we have isomorphism of chain complexes:
	\begin{equation}\label{Equ:GenComposeChain}
		C_{\text{Kh}}(T)\underset{{\mathcal{H}^{m_1}\otimes\cdots\otimes\mathcal{H}^{m_k}}}{\otimes} (C_{\text{Kh}}(S_1),...,C_{\text{Kh}}(S_k))\cong C_{\text{Kh}}(T\circ(S_1,...,S_k)),
	\end{equation}
as $(\mathcal{H}^{r_{1,1}}\otimes_{\mathbb{F}}\cdots\otimes_{\mathbb{F}}\mathcal{H}^{r_{k,l_k}},\mathcal{H}^{n})$-bimodules. Hence, we can write
\begin{equation}\label{Equ:BoxTenChain}
		C_{\text{Kh}}(T\circ(S_1,...,S_k))\cong C_{\text{Kh}}(T)\underset{{\mathcal{H}^{m_1}\otimes\cdots\otimes\mathcal{H}^{m_k}}}{\otimes} (C_{\text{Kh}}(S_1)\otimes\cdots\otimes C_{\text{Kh}}(S_k))
	\end{equation}

Given an elementary cobordism $\mathfrak{s}$ between diskular $(m_1,\cdots,m_k;n)$-tangles $R$ and $S$, it induces a $(\mathcal{H}^{m_1}\otimes_{\mathbb{F}}\cdots \otimes_{\mathbb{F}}\mathcal{H}^{m_k},\mathcal{H}^{n})$-bimodules morphism $C_{\text{Kh}}(\mathfrak{s}): C_{\text{Kh}}(R)\rightarrow C_{\text{Kh}}(S)$ with grading shift $(0,\chi(\mathfrak{s}))$.

\subsubsection{Functoriality of Khovanov}\label{SSec:KhovFunct}
Khovanov homology is \emph{functorial} up to a sign. That is to say, for an ambient isotopy relative to the boundary between cobordism $\Sigma$ and $\Sigma'$ in $\mathbb{R}^3\times[0,1]$, the maps induced on Khovanov homology are equal up to a sign.

\begin{Theorem}\cite{Jac04,Bar05,Kho06}\label{Thm:KhovFunct}
	Let $\Sigma,\Sigma'\subset\mathbb{R}^3\times[0,1]$ be two smooth cobordism from  $L_0$ to $L_1$. If they are isotopic relative to the boundary, then up to a sign they induce chain homotopic maps on the Khovanov chain complex. Hence, the induced map on homology satisfies
    \begin{equation}
        \text{Kh}(\Sigma)=\pm \text{Kh}(\Sigma'):\text{Kh}(L_0)\rightarrow\text{Kh}(L_1).
    \end{equation}
\end{Theorem}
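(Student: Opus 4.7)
The plan is to reduce the statement to a finite check on movie moves. By the Carter--Saito theorem recalled in the introduction, every smooth cobordism in $\mathbb{R}^3\times[0,1]$ admits a movie presentation, and if $\Sigma$ and $\Sigma'$ are isotopic relative to boundary then their movies are related by a finite sequence of movie moves (the local ones listed in Figures~5--9 of \cite{Kho06}). On the other hand, the Khovanov chain complex functor $C_{\text{Kh}}$ assigns a chain map to every elementary string move (birth, death, saddle, or Reidemeister), and hence to every movie by composition. Thus, to produce the chain homotopy class $\text{Kh}(\Sigma)$ and compare it with $\text{Kh}(\Sigma')$ it suffices to verify that \emph{for each movie move separately}, the two compositions of elementary chain maps are chain homotopic up to a global sign.

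Next I would organize the movie moves into manageable families. Following Bar-Natan, they fall into three types: (i) \emph{distant-commutativity} moves, which swap the vertical order of two elementary modifications occurring in disjoint regions of the plane; (ii) \emph{Reidemeister cancellation and cycle} moves, in which sequences of Reidemeister moves in the same region trivialize; and (iii) \emph{mixed Morse--Reidemeister} moves, where a Morse critical point interacts with a nearby Reidemeister move. Distant-commutativity is immediate from the gluing/tensor-product isomorphism in Theorem~\ref{Thm:Glue}, because the chain maps factor through disjoint tensor factors that commute strictly. The Reidemeister-cycle moves reduce to the invariance of $C_{\text{Kh}}$ under Reidemeister moves, together with the fact that the chain-homotopy equivalences constructed in the invariance proof have explicit inverses built from the same Frobenius-algebra data.

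For the remaining (and most delicate) movie moves, I would pass to Bar-Natan's dotted cobordism formulation \cite{Bar05}, where $C_{\text{Kh}}$ becomes a formal complex of smoothings with differentials given by formal cobordisms modulo the $S$, $T$, and $4Tu$ relations. In that picture each mixed Morse--Reidemeister movie move becomes a local identity between two formal cobordisms supported in a ball, which can be verified by pushing the identity down to the Frobenius algebra $\mathcal{A} = \mathbb{F}[X]/(X^2)$ and checking it on the finitely many generators $\boldsymbol{1}, X$. The resolution-cube structure makes this a bounded computation, and the same verification simultaneously pins down the chain-homotopy explicitly.

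The hard part is the sign. Generically the local identities produced in the previous step hold on the nose, but a small number of movie moves --- notably certain cyclic sequences of Reidemeister~I moves and the ``flying saucer'' move interchanging two saddles with a cancelling pair of Reidemeister~I moves --- only hold up to a factor of $-1$, and this sign cannot be removed by any consistent renormalization of the elementary chain maps. Showing that $\pm 1$ is the \emph{only} ambiguity, and that it is globally consistent (so that $\text{Kh}(\Sigma)$ is well defined as a map up to sign), requires carefully bookkeeping the signs across all movie moves simultaneously; this is exactly the content of the arguments of Jacobson \cite{Jac04}, Bar-Natan \cite{Bar05}, and Khovanov \cite{Kho06}, and it is the step I expect to demand the most care when I adapt this proof to the equivariant setting in Theorem~\ref{Thm:EquiFunct}, where the signs will combine to give the $(\pm 1)^p$ ambiguity.
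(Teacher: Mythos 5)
The paper does not actually prove Theorem~\ref{Thm:KhovFunct}; it cites \cite{Jac04,Bar05,Kho06} and offers only a three-sentence sketch of the strategy (reduce via Carter--Saito to finitely many movie moves, then verify that equivalent movies induce chain maps that agree up to $\pm\mathrm{Id}$). Your proposal follows the same reduction and is therefore consistent with the paper's sketch, but you expand it substantially, which is fine. Two caveats worth addressing if you develop this further. First, when you pass to Bar-Natan's dotted-cobordism framework, the identities for the delicate movie moves are established at the level of formal cobordisms modulo the $S$, $T$, and $4Tu$ relations, \emph{before} any TQFT is applied; ``pushing the identity down to $\mathcal{A}=\mathbb{F}[X]/(X^2)$ and checking on generators'' is Jacobson's and Khovanov's method, not Bar-Natan's, and conflating the two undercuts the entire reason one works in the cobordism category. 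Second, the claim that the $\pm 1$ ``cannot be removed by any consistent renormalization'' both overstates what the theorem requires and is not strictly true (subsequent sign-fixing work using disoriented or decorated cobordisms shows the sign can be eliminated with added structure); all this theorem needs is that the ambiguity is \emph{at most} a sign, which follows from verifying each movie move individually up to sign, exactly as the paper's sketch indicates.
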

We will not prove theorem \ref{Thm:KhovFunct} here and refer the reader to \cite{Jac04,Bar05,Kho06}, but we discuss the central ideas involved in the proof. The movies of isotopic cobordisms are related by finite sequence of movie moves. Hence, to prove the theorem, we should compute the chain homotopy between maps induced by movie moves. The result follows by showing the Khovanov chain complex of equivalent movies have only $\pm\text{Id}$ as chain maps.

\subsubsection{Obstruction to ribbon concordance}\label{SSec:NecCutRibbon}

This section is devoted to re-stating the main result form \cite{LZ19}. Their result provides an obstruction to ribbon concordance, from Khovanov homology. First, we recite a few notions related to the study of smooth surfaces using Khovanov homology.

Let $\mathcal{C}ob^2$ denote the cobordism category of links, with objects the links in $S^3$ and morphism consisting of smooth surfaces embedded in $S^3\times [0,1]$. Let $\mathcal{K}ob^2$ denote the  pre-additive $\mathbb{Z}$-linear category freely generated by $\mathcal{C}ob^2$. The category $\mathcal{K}ob^2$ has the same objects as $\mathcal{C}ob^2$, and morphisms from $L$ to $L'$ in $\mathcal{K}ob^2$ are finite formal linear combinations of cobordisms from $L$ to $L'$ in $\mathcal{C}ob^2$ and the composition is induced from the composition in $\mathcal{C}ob^2$. Also, let $\mathcal{K}ob^2_{\bullet}$ denotes the pre-additive category of \textit{dotted} cobordisms \cite[Section 11.2]{Bar05}. In this category objects are links in $S^3$. A morphism is a finite formal sum of dotted cobordisms $\mathcal{S}$, where a dotted cobordism means a properly embedded smooth surface possibly with boundary in $S^3\times[0,1]$ and finitely many dots (marked points) on its interior. 

\begin{Definition}\label{Def:Neck}
    Let $\Sigma$ be a cobordism and $h$ be a smoothly embedded 3-dimensional 1-handle $[-1,1]\times \mathbb{D}^2$ in $\mathbb{R}^3\times [0,1]$ such that $\mathfrak{n}=h\cap \Sigma$ is an embedded annulus $[-1,1]\times S^1\hookrightarrow \Sigma$ with the movie of $\mathfrak{n}$ is given by figure \ref{Fig:NeckCut} (a). We call $\mathfrak{n}$ a \emph{standard neck} on the cobordism $\Sigma$.
\end{Definition}
Figure \ref{Fig:NeckCut} shows the local movie of a standard neck on a cobordism. It is consists of two band sums such that the number of connected components on the left frame of the movie is equal to the number of the connected components on the right frame. Up to isotopy relative to the boundary of a cobordism $\Sigma$, we can transform any neck to a standard neck.

\begin{figure}[h]
\centering
    \begin{subfigure}[c]{0.5\textwidth}
	\def\svgwidth{0.75\textwidth}
	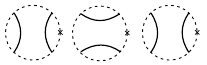

        \caption{}
    \end{subfigure}
    
    \begin{subfigure}[c]{0.45\textwidth}
	\def\svgwidth{0.8\textwidth}
	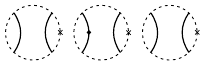

        \caption{}
    \end{subfigure}
    \begin{subfigure}[c]{0.45\textwidth}
	\def\svgwidth{0.8\textwidth}
	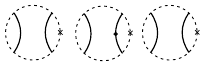

        \caption{}
    \end{subfigure}
	\caption[Neck cutting relation]{(a) Shows the movie of a standard neck. (b) and (c) show the movie of the cobordisms $\Sigma_+$ and $\Sigma_-$ respectively after the standard neck cut.\label{Fig:NeckCut}}
\end{figure}

Let $\Sigma_{+}$ (respectively $\Sigma_{-}$) be the result of deleting the neck $\mathfrak{n}\subset \Sigma$ from $\Sigma$ and smoothly gluing $\{-1,1\}\times \mathbb{D}^2\times \{0\} $ on the new boundary, and putting a new dot on $\{1\}\times \mathbb{D}^2 \times \{0\}$ (respectively $\{-1\} \times\mathbb{D}^2\times \{0\}$).

At the Khovanov homology package, a dot on a connected component of a cobordism induces the multiplication by $\boldsymbol{X}\in\mathcal{A}$ on the component with the dot and the identity on the rest of the components. Moreover, the placement of the dots on $\Sigma_+$ (respectively $\Sigma_-)$ does not affect the map $\text{Kh}(\Sigma_+)$ (respectively $\text{Kh}(\Sigma_-))$ induced on Khovanov homology. Hence analogous to the theorem \ref{Thm:KhovFunct}, we have,

\begin{Theorem}\cite{BLS17,Sar20}
	If $\Sigma$ and $\Sigma'$ are isotopic dotted cobordisms relative to the boundary in $S^3\times[0,1]$, then they are related by a sequence of movie moves and dot isotopies. Moreover, the maps induced by $\Sigma,\Sigma'$ are chain homotopic up to a sign.
\end{Theorem}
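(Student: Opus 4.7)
The plan is to bootstrap this statement from ordinary Khovanov functoriality (Theorem~\ref{Thm:KhovFunct}) by separating the combinatorics of the underlying smooth surface from the combinatorics of the dots. The guiding principle is that, at the chain level, a dot represents multiplication by $\boldsymbol{X}\in\mathcal{A}$ on the connected component of the resolved surface carrying it, and this operation is insensitive to where on that component the dot is located.

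For the first assertion, I would begin with an ambient isotopy $F_t$ of $S^3\times[0,1]$ (rel boundary) carrying $\Sigma$ to $\Sigma'$. Forgetting the dots, $F_t$ restricts to a smooth isotopy of the underlying undotted cobordisms, so by Carter--Saito their movies differ by finitely many movie moves, and after a small perturbation these moves can be taken localized in disjoint disks disjoint from the finite trajectories $\{F_t(d_i)\}$ of the dots. Simultaneously, $F_t$ transports each dot along a smooth path lying on its connected component of the surface. Subdividing $[0,1]$ finely enough, I would interleave the two kinds of modifications so that each individual step in the resulting sequence is either a movie move in the undotted sense or a dot isotopy.

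For the second assertion, the chain map induced by a dotted cobordism factors as the undotted chain map post-composed with multiplication by $\boldsymbol{X}$ on the appropriate resolution circles. Because $\mathcal{A}=\mathbb{F}[\boldsymbol{X}]/(\boldsymbol{X}^2)$ is a commutative Frobenius algebra, multiplication by $\boldsymbol{X}$ intertwines with the saddle, cup, and cap maps supplied by $\mathcal{F}$; this is the content of the standard Frobenius identities. Consequently a dot isotopy induces the identity chain map at each elementary step, and the whole sequence relating $\Sigma$ to $\Sigma'$ collapses to the undotted chain map composed with a fixed monomial in $\boldsymbol{X}$'s (one factor per dot, attached to the appropriate circle in the initial resolution). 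Applying Theorem~\ref{Thm:KhovFunct} to the underlying undotted cobordisms then yields the desired chain homotopy up to sign.

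The main obstacle will be verifying the intertwining of $\boldsymbol{X}$-multiplication with the saddle map in the case where a dot must be pushed across a critical point of the movie's Morse function, in particular when a saddle merges (or splits) the component carrying the dot. This reduces to a local, finite check in $\mathcal{A}$---one pair of identities per critical-point type---carried out explicitly in \cite{BLS17,Sar20}. Beyond this, no new sign ambiguity is introduced: all signs are inherited from the undotted functoriality theorem.
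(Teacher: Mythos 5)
There is a real gap in the second part of your argument, and it concerns precisely the local relation that the paper's own sketch singles out. You assert that ``a dot isotopy induces the identity chain map at each elementary step,'' and that consequently the whole map collapses to the undotted chain map composed with a monomial in $\boldsymbol{X}$'s attached to circles in the initial resolution. This is false whenever a dot slides past a crossing \emph{within a fixed frame} of the movie: the arcs on opposite sides of a crossing generally lie on different circles in different resolutions of the Khovanov cube, so the chain map ``multiply by $\boldsymbol{X}$ at position $a$'' is genuinely different from ``multiply by $\boldsymbol{X}$ at position $b$'' when $a$ and $b$ are separated by a crossing. These two maps are only \emph{chain homotopic}, not equal, and exhibiting the explicit homotopy is the essential new computation in \cite{BLS17,Sar20}. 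Your proposal handles the Frobenius intertwining of $\boldsymbol{X}$-multiplication with births, deaths, and saddles, but nowhere confronts this crossing-slide relation; without it, your claimed factorization ``undotted map $\circ$ fixed $\boldsymbol{X}$-monomial'' is simply not well defined, since it depends on how the dots are threaded past crossings while being pushed to a single frame.

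A secondary, more cosmetic concern: you propose perturbing the Carter--Saito movie moves so their supporting disks avoid the dot trajectories $\{F_t(d_i)\}$. Movie moves occur at specific singular configurations of the projected surface and cannot in general be displaced at will. The standard fix is the reverse: first isotope the dots out of the movie-move disk (a dot isotopy on the underlying surface), perform the undotted move, and then return the dots. But this ``dot clearing'' itself passes dots across crossings and thus re-enters the chain-homotopy discussion from the previous paragraph. So the reduction to ``undotted functoriality $+$ easy Frobenius identities'' does not close; you still need the explicit verification that the dot-past-crossing relation holds up to chain homotopy, which is the content the paper is citing.
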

We will not prove this theorem here, but we will give some insight to how the proof works. A movie for a dotted cobordism $\Sigma$ consists of sequence of elementary cobordisms (a sequence of planar isotopies, Reidemeister moves, births, saddles, and deaths), and \emph{dot additions} that are frames in the movie with a dot appearing on an arc. Then, we can show that moving a dot under or over a crossing will induce chain homotopic maps on Khovano chain complex. Hence, the maps induced on Khovanov homology by isotopic dotted cobordisms are equal up to a sign. 

The Khovanov homology satisfies the following neck cutting relation.
\begin{Proposition}[Neck Cutting Relation]\cite[section 11.2]{Bar05}\label{Pro:KhoNecCut}
    Assume $\mathfrak{n}$ is a standard neck on the cobordism $\Sigma$, and $\Sigma_+$ and $\Sigma_-$ are the result of cutting the neck $\mathfrak{n}$. Then, 
    \begin{equation}\label{Equ:NCR}
    	\text{Kh}(\Sigma)= \pm\text{Kh}(\Sigma_+)\pm\text{Kh}(\Sigma_-)
    \end{equation}
\end{Proposition}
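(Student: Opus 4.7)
The plan is to reduce the global identity to a local Frobenius-algebra computation, using functoriality (Theorem \ref{Thm:KhovFunct}) and the gluing theorem (Theorem \ref{Thm:Glue}). First I would localize the neck: since $\mathfrak{n}$ is an embedded annulus with trivial normal bundle in $\Sigma$, after an isotopy relative to the boundary I may assume that the $1$-handle $h$ lies inside a small $4$-ball $B \subset \mathbb{R}^3 \times [0,1]$ and that $\Sigma \cap B = \mathfrak{n}$ is a standard cylinder $S^1 \times [-1,1]$. By Theorem \ref{Thm:KhovFunct}, such an isotopy changes $\text{Kh}(\Sigma)$ by at most a sign. The cobordisms $\Sigma_{\pm}$ agree with $\Sigma$ outside $B$, and inside $B$ replace $\mathfrak{n}$ by a death followed by a birth, with a single dot on one of the two resulting disks.

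Writing $\Sigma^{\text{out}} = \Sigma \setminus \operatorname{int}(\mathfrak{n})$, the gluing isomorphism from Theorem \ref{Thm:Glue} (together with \eqref{Equ:GenComposeChain}) yields the factorizations
\begin{equation*}
    C_{\text{Kh}}(\Sigma) = C_{\text{Kh}}(\Sigma^{\text{out}}) \circ C_{\text{Kh}}(\mathfrak{n}), \qquad C_{\text{Kh}}(\Sigma_{\pm}) = C_{\text{Kh}}(\Sigma^{\text{out}}) \circ C_{\text{Kh}}(\mathfrak{n}_{\pm}),
\end{equation*}
where $\mathfrak{n}_{\pm}$ denotes the local dotted cut cobordism inside $B$. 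It therefore suffices to establish the local identity $\text{Kh}(\mathfrak{n}) = \text{Kh}(\mathfrak{n}_+) + \text{Kh}(\mathfrak{n}_-)$ as maps $\mathcal{A} \to \mathcal{A}$.

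The cylinder $\mathfrak{n}$ induces the identity on $\mathcal{A} = \mathbb{F}[\boldsymbol{X}]/(\boldsymbol{X}^2)$. Writing $\iota$ for the unit and $\epsilon$ for the counit of the Khovanov Frobenius structure (so that $\iota(1) = \boldsymbol{1}$, $\epsilon(\boldsymbol{1}) = 0$, and $\epsilon(\boldsymbol{X}) = 1$), and recalling that a dot acts as multiplication by $\boldsymbol{X}$, the two cut cobordisms induce
\begin{equation*}
    \text{Kh}(\mathfrak{n}_+)(v) = \epsilon(v)\,\boldsymbol{X}, \qquad \text{Kh}(\mathfrak{n}_-)(v) = \epsilon(v\boldsymbol{X})\,\boldsymbol{1}.
\end{equation*}
Evaluating on the basis $\{\boldsymbol{1}, \boldsymbol{X}\}$, the sum of these two maps sends $\boldsymbol{1} \mapsto 0 + \boldsymbol{1} = \boldsymbol{1}$ and $\boldsymbol{X} \mapsto \boldsymbol{X} + 0 = \boldsymbol{X}$, confirming the identity on $\mathcal{A}$.

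Composing on both sides with $C_{\text{Kh}}(\Sigma^{\text{out}})$ then gives the claimed relation. The $\pm$ signs in the statement appear because Theorem \ref{Thm:KhovFunct} only identifies the chain maps induced by $\Sigma$, $\Sigma_+$, and $\Sigma_-$ with their factored forms up to sign, and those signs are chosen independently for each of the three cobordisms. I expect the main obstacle to be precisely this sign bookkeeping: the local Frobenius identity holds on the nose with coefficients $+1$, but reassembling through functoriality introduces three independent sign ambiguities, one for each cobordism in the relation.
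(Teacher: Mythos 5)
The paper itself gives no proof of this proposition --- it is restated with a citation to Bar--Natan \cite[section 11.2]{Bar05}, so there is no ``paper's proof'' to match. Your proposal supplies the underlying argument, and it is essentially Bar--Natan's: reduce via gluing to a local statement, then verify the Frobenius identity on the basis $\{\boldsymbol{1},\boldsymbol{X}\}$. The computation
$\mathrm{Kh}(\mathfrak{n}_+)(v)+\mathrm{Kh}(\mathfrak{n}_-)(v)=\epsilon(v)\boldsymbol{X}+\epsilon(v\boldsymbol{X})\boldsymbol{1}=v$
is correct for the Khovanov Frobenius structure ($\epsilon(\boldsymbol{1})=0$, $\epsilon(\boldsymbol{X})=1$), and your account of where the $\pm$ signs enter --- the independent sign ambiguities coming from Theorem~\ref{Thm:KhovFunct} --- matches the remark the paper makes immediately after the proposition, namely that at the chain level one can even arrange equality on the nose.

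One imprecision worth flagging. You claim that after an isotopy relative to the boundary you may assume $\Sigma\cap B=\mathfrak{n}$ is a vertical cylinder $S^1\times[-1,1]$, so that the two cut pieces are a cap and a cup. But the paper's Definition~\ref{Def:Neck} and Figure~\ref{Fig:NeckCut}(a) define a standard neck to be an annulus in $\Sigma$ whose movie consists of two \emph{saddles}; correspondingly $\Sigma_\pm$ is a death followed by a birth inside a disk of the diagram, not a cap--cup pair in a single vertical slab. An isotopy of $\Sigma$ relative to the entire boundary need not turn the neck into a vertical cylinder; what you can arrange is only that the two saddle times of $\mathfrak{n}$ lie in a short interval. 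This does not invalidate the argument, but the cleanest formulation (and the one Bar--Natan actually uses) is to work in the dotted cobordism category modulo local relations, where the neck-cutting relation is an imposed local relation valid for any embedded annulus regardless of how it sits relative to the height function, and where the Khovanov TQFT is checked once and for all to respect those relations. Your explicit basis computation is a correct verification of the vertical-tube case of that relation; to cover the paper's two-saddle neck directly you would instead check the equivalent identity involving the multiplication and comultiplication of $\mathcal{A}$ rather than $\iota$ and $\epsilon$.
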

At the level of the Khovanov chain complex, the induced chain map $C_{\text{Kh}}(\Sigma):C_{\text{Kh}}(L_0)\rightarrow C_{\text{Kh}}(L_1)$ is up to a sign chain homotopic (more strongly, equal) to the chain map  $C_{\text{Kh}}(\Sigma_+)+C_{\text{Kh}}(\Sigma_-):C_{\text{Kh}}(L_0)\rightarrow C_{\text{Kh}}(L_1)$.

Lastly we have,
\begin{Theorem}\cite{LZ19}\label{Thm:Concordance}
    If $F$ is a smooth ribbon concordance from $L_0$ to $L_1$ , then the map induced on Khovanov homology is a bi-graded split injection.
\end{Theorem}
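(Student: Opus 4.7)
The plan is to produce a one-sided inverse to $\text{Kh}(F)$ using the reverse cobordism. Let $\bar{F}: L_1 \to L_0$ denote the cobordism obtained from $F$ by reflecting through the middle level $S^3 \times \{1/2\}$, i.e., by running the movie of $F$ backwards. Because $F$ is a concordance (a union of annuli, so $\chi(F) = 0$), both $F$ and $\bar{F}$ induce bigrading-preserving maps on Khovanov homology, so producing any left inverse of the form $\text{Kh}(\bar{F})$ automatically yields a bi-graded splitting. It therefore suffices to show $\text{Kh}(\bar{F}) \circ \text{Kh}(F) = \pm \text{Id}_{\text{Kh}(L_0)}$.

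By the functoriality of Khovanov homology (Theorem \ref{Thm:KhovFunct}), this composition agrees up to sign with the map $\text{Kh}(\bar{F} \circ F)$ induced by the doubled cobordism $\bar{F} \circ F: L_0 \to L_0$, so the task reduces to computing this single map. Here I would invoke the ribbon hypothesis essentially: present $F$ by a movie in which the $b$ births (of small unknots $U_1, \ldots, U_b$) occur first, followed by $b$ saddles $\sigma_1, \ldots, \sigma_b$ chosen so that each $\sigma_i$ band-sums $U_i$ into the rest of the link, which is possible precisely because $F$ is ribbon. Then $\bar{F}$ consists of the reverse saddles $\bar{\sigma}_b, \ldots, \bar{\sigma}_1$ followed by $b$ deaths. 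In the composite $\bar{F} \circ F$ one can isotope each pair $(\sigma_i, \bar{\sigma}_i)$ to be adjacent; together with the associated birth and death of $U_i$, these four critical points assemble, near each band, into a small spherical bubble attached to the trivial cobordism $L_0 \times [0,1]$ by a thin tube.

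Applying the neck cutting relation (Proposition \ref{Pro:KhoNecCut}) to each of the $b$ tubes expresses $\text{Kh}(\bar{F} \circ F)$ as a sum of $2^b$ terms, each indexed by a placement of a dot on one side of each cut. In each term the local bubble becomes either an undotted $2$-sphere, which evaluates to zero in Bar-Natan's dotted cobordism category, or a singly-dotted $2$-sphere, which evaluates to $1$ and disappears, leaving the rest of the cobordism untouched. Hence only the single term in which every bubble carries the dot on its spherical side survives, and that term is precisely the identity cobordism $L_0 \times [0,1]$. Combining with functoriality yields $\text{Kh}(\bar{F}) \circ \text{Kh}(F) = \pm \text{Id}_{\text{Kh}(L_0)}$, so $\text{Kh}(F)$ is a bi-graded split injection.

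The main obstacle is the topological reorganization of $\bar{F} \circ F$ into the bubble-and-tube picture. This requires the ribbon property essentially: without it, one cannot in general pair the forward saddles in $F$ with reverse saddles in $\bar{F}$ in a manner that localizes the cancellations, and the doubled cobordism could retain extra topology (genus, linking between bubbles, or entangled tubes) that fails to trivialize under neck cutting. Once this topological standardization is carried out, the remaining steps are routine applications of the sphere relations and neck cutting already present in Bar-Natan's dotted-cobordism formalism.
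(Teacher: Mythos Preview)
Your proposal is correct and follows exactly the Levine--Zemke argument: compose with the reversed concordance $\bar{F}$, reorganize $\bar{F}\circ F$ as the identity cylinder with unknotted spheres tubed on, apply neck cutting, and use the sphere relations to kill all but the identity term. The paper does not actually supply its own proof of this statement---it is simply cited from \cite{LZ19}---but the paper's proof of the equivariant analogue (Theorem~\ref{Thm:EquiRibbCon}) explicitly mirrors this same strategy, so your write-up matches both the original source and the method the paper adopts.
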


\subsection{Equivariant Khovanov homology}\label{Chp:EquKhovInv}
Section \ref{SSec:AlgBack} is devoted to studying the equivariant cohomology of chain complexes with a group action. We will direct our 
focus on Khovanov chain complex of periodic links in section \ref{Sec:EquiKhov}. Also, we fix the choice of our commutative ring 
$\mathbb{F}$ to be the integers $\mathbb{Z}$.

\subsubsection{Equivariant cohomology}\label{SSec:AlgBack}
Let $G$ be a finite cyclic group. We will denote by $EG$ the universal $G$-bundle. More precisely,  $EG$ is a contractible $G$-CW 
complex such that the action of $G$ is free. The orbit space $EG/G$ is the classifying space $BG$.Given $X$ a topological $G$-space, 
the $G$-\textit{equivariant cohomology} (also referred to as \textit{Borel equivariant cohomology}) 
of $X$ is defined as 
\begin{equation}
    H^{*}_{G}(X;\mathbb{Z}):=H^{*}(EG\times X/G;\mathbb{Z}), 
\end{equation}
where $(EG\times X)/G$ is the quotient of $EG\times X$ by the diagonal action of $G$. For more treatment of this topic, see \cite{Wei94,KSBrown82,BP17}. 

For computations, the cellular cochain complex $C^{*}_{\text{cell}}(EG;\mathbb{Z})$ can be considered as a free resolution of 
$\mathbb{Z}$ as a $\mathbb{Z}[G]$-module where $G$ acts on $\mathbb{Z}$ trivially. Additionally, the action of $G$ on $X$ makes 
the cochain complex $C^*_{\text{cell}}(X;\mathbb{Z})$ a $\mathbb{Z}[G]$-module. Hence, 
\begin{equation}
	H_{G}^*(X;\mathbb{Z}) = \textrm{Ext}^*_{\mathbb{Z}[G]}(\mathbb{Z}, C^{*}_{\text{cell}}(X;\mathbb{Z})).
\end{equation}

The $G$-equivariant cohomology of $X$ is equivalent to computing the group homology of $\mathbb{Z}$ with coefficients in 
$C^*_{\text{cell}}(X;\mathbb{Z})$ \cite[Chapter VII]{KSBrown82}. Instead of $\mathbb{Z}$, we could use any $\mathbb{Z}[G]$-module $M$.

With abuse of notation we denote the cellular chain complex $C^{\text{cell}}_{\bullet}(EG,\mathbb{Z})$ by $EG$. That is, we have a chain complex
\begin{equation}
    \begin{tikzcd}    
        EG:\: \cdots \arrow[r,"1-\theta"] & EG^{2} =\mathbb{Z}[G] \arrow[r,"1+\cdots+\theta^{p-1}"] &EG^{1}=\mathbb{Z}[G] \arrow[r,"1-\theta"] 
        & EG^{0}=\mathbb{Z}[G] \arrow[r] &0
    \end{tikzcd}
\end{equation}
where $\mathbb{Z}[G]=\mathbb{Z}[\theta]/(1-\theta^p)$, and $p$ is the order of the group $G$.

\begin{Proposition}
    Let $E{G}^*=\mathrm{Hom}_{\mathbb{Z}[G]}(EG,\mathbb{Z}[G])$, for a bounded below cochain complex $C$ over $\mathbb{Z}[G]$, 
    there is a natural isomorphism 
    \begin{equation}\label{Equ:EquiNatuIso}
        C\otimes_{\mathbb{Z}[G]}E{G}^*\rightarrow \mathrm{Hom}_{\mathbb{Z}[G]}(EG,C)
    \end{equation}
    of $\mathbb{Z}$-modules give by $c\otimes x\mapsto \phi_{c\otimes x}:EG\rightarrow C$ that is defined by $\phi_{c\otimes x}(a)=cx(a)$ 
    for $c\in C$, $ x\in EG^*$,  and $ a\in EG$.
\end{Proposition}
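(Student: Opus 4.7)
The plan is to reduce the proposition to the standard natural isomorphism between tensor product and Hom that holds whenever one of the factors is finitely generated free, applied degree by degree. First, I would verify that the formula $c \otimes x \mapsto \phi_{c \otimes x}$ is well-defined on the tensor product over $\mathbb{Z}[G]$: for $g \in G$ one checks $\phi_{cg \otimes x}(a) = cg \cdot x(a) = c \cdot (gx)(a) = \phi_{c \otimes gx}(a)$ using that $x$ is $\mathbb{Z}[G]$-linear, and separately that each $\phi_{c \otimes x}$ is $\mathbb{Z}[G]$-linear in $a$ because $x$ itself is.

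Second, I would use that each individual term $EG^{i} = \mathbb{Z}[G]$ is the free $\mathbb{Z}[G]$-module of rank one. Picking the canonical generator $1 \in \mathbb{Z}[G] = EG^{i}$ with dual $e_{i}^{*} \in EG^{i*}$, both sides of (\ref{Equ:EquiNatuIso}) admit natural identifications with $C$ in each bidegree: the left-hand side by $c \otimes e_{i}^{*} \mapsto c$, and the right-hand side by evaluation $f \mapsto f(1)$. Under these identifications, the map of the proposition becomes the identity, so it is a bijection in each bidegree. Naturality in $C$ is immediate from the formula $\phi_{c \otimes x}(a) = c \, x(a)$, which is visibly natural in the first variable.

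Third, I would assemble these bidegree isomorphisms into a single isomorphism of total complexes. The hypothesis that $C$ is bounded below is what I would invoke here: it guarantees that in each total degree $n$, only finitely many of the bidegree summands $\mathrm{Hom}_{\mathbb{Z}[G]}(EG^{i}, C^{n+i})$ are nonzero, so the direct product defining the internal Hom collapses to a direct sum, matching the direct sum on the tensor side. Compatibility with the differential then reduces to comparing the standard total differentials on $C \otimes_{\mathbb{Z}[G]} EG^{*}$ and $\mathrm{Hom}_{\mathbb{Z}[G]}(EG, C)$, which are designed precisely so that tensor-Hom adjunction commutes with them.

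The main obstacle I expect is purely bookkeeping: carefully tracking the left versus right $\mathbb{Z}[G]$-module structures on $EG$, $EG^{*}$, and $C$ (one uses the anti-involution $g \mapsto g^{-1}$ to swap sides where needed), and matching the Koszul signs in the total differentials so that the degree-wise identification is genuinely a chain map and not merely a graded bijection. Once those sign conventions are fixed, the proof reduces to the rank-one free module case, which is trivial.
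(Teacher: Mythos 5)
Your proof is correct and complete. The paper itself gives no real argument here — its proof reads verbatim ``It is immediate from the definition of equivariant cohomology'' — so the degree-by-degree reduction you carry out (the canonical map $C^j\otimes_{\mathbb{Z}[G]}\mathrm{Hom}_{\mathbb{Z}[G]}(EG_i,\mathbb{Z}[G])\to\mathrm{Hom}_{\mathbb{Z}[G]}(EG_i,C^j)$ is an isomorphism because each $EG_i\cong\mathbb{Z}[G]$ is finitely generated free, then assemble) is exactly the argument the paper is implicitly waving at. Two small points worth making explicit when you write this up: the left/right bookkeeping you flag is harmless here precisely because $G=\mathbb{Z}_p$ is abelian so $\mathbb{Z}[G]$ is commutative, and you have correctly identified where the ``bounded below'' hypothesis is genuinely used — since $EG$ is an unbounded resolution, it is the bound on $C$ that forces the a priori infinite product in each total degree of $\mathrm{Hom}_{\mathbb{Z}[G]}(EG,C)$ to collapse to a finite one matching the direct sum on the tensor side.
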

\begin{proof}
    It is immediate from the definition of equivariant cohomology.
\end{proof}

\subsubsection{Equivariant Khovanov homology}\label{Sec:EquiKhov}
Let $\text{Kh}(L)$ and $C_{\text{Kh}}(L)$ respectively denote the Khovanov homology and the Khovanov chain complex of a link $L$, 
with coefficients in $\mathbb{Z}$ (section \ref{SSec:KhovanovInv}). Also, to avoid introducing new notation, we denote by 
$\theta$ the generator of the group ring $\mathbb{Z}[\mathbb{Z}_p]=\mathbb{Z}[\theta]/(\theta^p-1)$. 

\begin{Definition}
The \emph{equivariant Khovanov homology} for a $p$-periodic link $L$ with coefficients in $\mathbb{Z}[\mathbb{Z}_p]$-module $M$ is defined as
\begin{equation}
		\mathrm{EKh}(L,M)=\mathrm{Ext}_{\mathbb{Z}[\mathbb{Z}_p]}(M,C_{\mathrm{Kh}}(L;\mathbb{Z})).
\end{equation}
\end{Definition}

Note that in literature despite the fact that Khovanov invariant is defined as a cochain complex, it has referred to as Khovanov homology. 
We also adhere to this inconsistency here, and refer to the equivariant Khovanov invariant defined above by equivariant Khovanov homology 
instead of equivariant Khovanov cohomology. Hence, for computations using the Khovanov chain complex,
\begin{equation}
		C_{\text{EKh}}^k(L,M)=\displaystyle{\bigoplus_{i+j=k}} \text{Hom}_{\mathbb{Z}[\mathbb{Z}_p]}(P_i,C_{\text{Kh}}^{j}(L))
\end{equation}
where $(P_{\bullet},\delta_{\bullet})$ is a projective resolution of $M$ as a $\mathbb{Z}[\mathbb{Z}_p]$-modules. Differentials for 
this complex are given by $d=\delta+d_{\text{Kh}}$. 

Since $C_{\text{Kh}}$ is a finitely generated free abelian group, if we fix $M=\mathbb{Z}$, we have,
\begin{equation}
	C_{\text{EKh}}^k(L;\mathbb{Z})=\bigoplus_{i+j=k}\mathbb{Z}[\mathbb{Z}_p]^*\underset{\mathbb{Z}[\mathbb{Z}_p]}{\otimes}C_{\text{Kh}}^i(L)
    \cong \bigoplus_{j\in \mathbb{Z}} C_{\text{Kh}}^{k-j}(L).
\end{equation}
 
Computing the equivariant Khovanov homology using periodic diskular tangles is done by decomposing the periodic diagram $D$ of the periodic link $L$, as 
\begin{equation}
    D = T\circ(S_1,...,S_p)
\end{equation}
where $T=T^{(n,\cdots,n;0)}$, $S_1=\cdots=S_p=T^{(;n)}$, and $\mathbb{Z}_p$ maps $T$ to itself, and acts by permutation on $(S_1,\cdots, S_p)$. Hence,
\begin{align}
    C_{\text{EKh}}(D) &= E\mathbb{Z}_p\underset{\mathbb{Z}[\mathbb{Z}_p]}{\bigotimes}\bigg(C_{\text{Kh}}(T)\underset{\mathcal{H}^n
    \otimes\cdots\otimes \mathcal{H}^n}{\otimes} C_{\text{Kh}}(S_1,...,S_p)\bigg)\\
\end{align}
where the tensor product on the right hand side is by the gluing lemma \ref{Thm:Glue}.

\subsubsection{Skew group ring}\label{SSec:TwiGroRin}
In this section we are going to study the algebraic structure of the Khovanov invariant of an admissible $p$-periodic diskular tangle. 
In summary, we show the Khovanov chain complex associated to a $p$-equivariant diskular tangle is projective over a Khovanov arc algebra 
(definition \ref{Def:ArcAlg}) equipped with the $\mathbb{Z}_p$ action. Here unless otherwise stated, tensor product $\otimes$ denotes 
the tensor product over $\mathbb{Z}$.

\begin{Definition}\label{Def:SkeGroRing}
Let $R$ be a unital ring, $G$ a finite group and $\theta:G\rightarrow\mathrm{Aut}_{\text{Ring}}(R)$ a group homomorphism. The 
\emph{skew group ring} of $G$ over $R$ induced by $\theta$ as a left $R$-module is given by
\begin{equation}\label{Equ:SkeGroRing}
    R_{\theta}[G]=\bigoplus_{g\in G}R\{g\},
\end{equation}
where $g\in G$ is considered as a formal variable and $\mathrm{Aut}_{\text{Ring}}(R)$ denotes the group of ring automorphisms of $R$. 
The addition in $R_{\theta}[G]$ is component-wise and multiplication is given by $rg\cdot sh= r\theta(g)(s)gh\in R\{gh\}$ for 
$rg\in R\{g\}$ and $sh\in R\{h\}$.
\end{Definition}

To simplify the notation, we denote $R_g=R\{g\}$, and we will write $\theta_g(b)$ to denote the image of $b\in R$ by the ring 
isomorphism $\theta(g):R\rightarrow R$. Also, we can see elements of $R_{\theta}[G]$ as formal sums
\begin{equation}
    R_{\theta}[G]=\{\sum_{g\in G} r_gg\vert r_g\in R_g\}.
\end{equation}

\begin{Proposition}\label{Prop:SkeGruRing}
    The skew group ring $R_{\theta}[G]$ defined above is a unital associative ring with multiplicative identity given by $1=1_R1_G\in R_{1_G}$.
\end{Proposition}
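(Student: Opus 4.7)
The plan is to verify the ring axioms directly from the multiplication rule $rg \cdot sh = r\theta_g(s)gh$, extended bilinearly to formal sums. Since addition is defined component-wise and the multiplication formula is $\mathbb{Z}$-bilinear on the summands $R_g \times R_h \to R_{gh}$, both left and right distributivity over the component-wise addition are automatic from the definition, so the only substantive checks are associativity and the identity axiom.

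For associativity, I would compute both sides for generators $rg$, $sh$, $tk$. On the one hand,
\begin{equation*}
(rg \cdot sh) \cdot tk \;=\; (r\theta_g(s)\,gh) \cdot tk \;=\; r\,\theta_g(s)\,\theta_{gh}(t)\, ghk,
\end{equation*}
while on the other hand, using that each $\theta_g$ is a ring homomorphism,
\begin{equation*}
rg \cdot (sh \cdot tk) \;=\; rg \cdot (s\theta_h(t)\, hk) \;=\; r\,\theta_g\!\bigl(s\theta_h(t)\bigr)\, ghk \;=\; r\,\theta_g(s)\,\theta_g(\theta_h(t))\, ghk.
\end{equation*}
The two expressions agree precisely because $\theta$ is a group homomorphism $G \to \mathrm{Aut}_{\text{Ring}}(R)$, hence $\theta_{gh} = \theta_g \circ \theta_h$. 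Extending by bilinearity, associativity holds on all formal sums.

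For the identity, I would check $1_R 1_G \cdot rg = rg$ and $rg \cdot 1_R 1_G = rg$ on generators. The first uses $\theta_{1_G} = \mathrm{id}_R$ (since $\theta$ is a group homomorphism), giving $1_R \theta_{1_G}(r)\, 1_G g = 1_R \cdot r \cdot g = rg$. The second uses that each $\theta_g$ is a unital ring map, so $\theta_g(1_R) = 1_R$, giving $r\theta_g(1_R)\, g 1_G = r \cdot 1_R \cdot g = rg$.

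The computation is genuinely routine; the only conceptual point to highlight is that the two nontrivial axioms isolate exactly the two properties packaged into the hypothesis $\theta : G \to \mathrm{Aut}_{\text{Ring}}(R)$ — the multiplicativity of $\theta$ drives associativity, and the unitality of each $\theta_g$ together with $\theta_{1_G} = \mathrm{id}$ drives the identity axiom. I expect no real obstacle, and would keep the write-up to a short paragraph noting these correspondences rather than expanding every bilinear extension.
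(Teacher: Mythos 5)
Your verification is correct and is exactly the routine check the paper alludes to when it says the proof ``is straightforward using the associativity of $R$ and the fact that $\theta$ is a group homomorphism''; the paper omits the computation entirely. Your write-up also correctly isolates where each hypothesis on $\theta$ enters, which matches the intended argument.
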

The proof is straightforward using the associativity of $R$ and the fact that $\theta$ is a group homomorphism. The skew group ring belongs 
to a more general class of rings known as \emph{strongly group graded rings}. We encourage the reader to look at \cite{Dad80,CK96,BG00} 
for more details on strongly group graded ring.

\begin{Example}
Assume $R=\mathbb{Z}$ and $G=\mathbb{Z}_p$ for $p$ a positive integer. Let $\theta:\mathbb{Z}_p\rightarrow \mathrm{Aut}_{\text{Ring}}(\mathbb{Z})$ 
be defined by 
\begin{equation}
    \theta(g)=id_{\mathbb{Z}} \quad \text{for all }g\in \mathbb{Z}_p.
\end{equation}
Then $\mathbb{Z}_{\theta}[\mathbb{Z}_p]$ is the usual group ring $\mathbb{Z}[\mathbb{Z}_p]$.
\end{Example}

Another example of a skew group ring is the following.
\begin{Example}
Let $A$ be a unital ring and assume $G\leqslant S_p$ is a subgroup of the $p$th symmetric group for a positive integer $p$. We have an action of 
$S_p$ on $A^{\otimes p}=A{\otimes}\cdots {\otimes}A$ by permutation of factors. Therefore, $G$ also acts on $A^{\otimes p}$. We can define a 
multiplication for the $A$-module $A^{\otimes p}{\otimes} \mathbb{Z}[G]$ by 
\begin{equation}\label{Equ:wreathprod}
    (a \otimes g)(b \otimes h) = a\cdot g(b)\otimes gh
\end{equation}
for $a, b \in A^{\otimes p}$ and $g, h \in G$. The resulting $\mathbb{Z}$-algebra $A^{\otimes p}{\otimes} \mathbb{Z}[G]$ is call the 
\emph{wreath product} of $A$ with $G$, and we denote it by $A\wr G$. 
\end{Example}

Let us fix an $R_{\theta}[G]$-module $M$ and $\mathbb{Z}[G]$-module $V$. Then $M\otimes V$ is naturally a left $R_{\theta}[G]\otimes\mathbb{Z}[G]$-module. 
Additionally, one can make $V\otimes M$ a left $R_{\theta}[G]$-module as follows. The \emph{semi-diagonal action} of $R_{\theta}[G]$ on $M\otimes V$ is defined by 
\begin{equation}\label{Equ:SemiDiagAct}
    rg\cdot(v\otimes m) = gv\otimes r\cdot g(m)
\end{equation}
This $R_{\theta}[G]$ action makes $M\otimes V$ a left $R_{\theta}[G]$-module. Again the proof of this statement is left as an exercise. 

Now we can compare the projective modules over $R_{\theta}[G]$ with $R$-modules.
\begin{Lemma}\cite[lemma 4.1]{BG00}\label{Lem:SemDiaTenIsProj}
Let $V$ be a left $\mathbb{Z}[G]$-module, and $M$ a left
$R_{\theta}[G]$-module. Assume that $V$ is free as a $\mathbb{Z}$-module. If $M$ is projective as an $R_{\theta}[G]$-module, then so is 
$V\otimes M$ as a left $R_{\theta}[G]$-module with the semi-diagonal action.
\end{Lemma}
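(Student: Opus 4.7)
The plan is to follow the standard three-step strategy for results of this flavor: reduce to a free module, construct a twisting isomorphism, and then identify the twisted object with an honest free module over $R_{\theta}[G]$.

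First I would reduce to the case $M=R_{\theta}[G]$. Since $M$ is projective, it is a direct summand of a free $R_{\theta}[G]$-module $F=\bigoplus_{i}R_{\theta}[G]$. The functor $V\otimes_{\mathbb{Z}}(-)$ commutes with direct sums, and one checks that the semi-diagonal action formula $(rg)\cdot(v\otimes m)=gv\otimes rg(m)$ makes the decomposition $V\otimes F\cong\bigoplus_{i}(V\otimes R_{\theta}[G])$ an isomorphism of $R_{\theta}[G]$-modules. Since summands of projectives are projective, it is enough to prove that $V\otimes R_{\theta}[G]$ (with the semi-diagonal action) is projective.

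Next I would build an explicit $R_{\theta}[G]$-linear isomorphism that trivializes the $G$-action on the $V$-factor. Let $V^{\flat}$ denote the underlying abelian group $V$ equipped with the \emph{trivial} $G$-action, and equip $V^{\flat}\otimes R_{\theta}[G]$ with the left $R_{\theta}[G]$-module structure coming from left multiplication on the second factor (so $G$ acts only on $R_{\theta}[G]$). Define
\begin{equation}
\phi\colon V\otimes R_{\theta}[G]\longrightarrow V^{\flat}\otimes R_{\theta}[G],\qquad \phi(v\otimes rg)=g^{-1}v\otimes rg,
\end{equation}
with inverse $\psi(v\otimes rg)=gv\otimes rg$. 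A direct check using $(sh)(rg)=s\theta_{h}(r)\,hg$ shows that for $sh\in R_{\theta}[G]$,
\begin{equation}
\phi\bigl((sh)\cdot(v\otimes rg)\bigr)=\phi(hv\otimes s\theta_{h}(r)\,hg)=g^{-1}v\otimes s\theta_{h}(r)\,hg=(sh)\cdot\phi(v\otimes rg),
\end{equation}
so $\phi$ is $R_{\theta}[G]$-equivariant. This is the step I expect to carry the most bookkeeping risk, since one has to keep straight (i) the $\mathbb{Z}[G]$-action on $V$, (ii) the ring structure of $R_{\theta}[G]$ coming from $\theta$, and (iii) the two different $R_{\theta}[G]$-module structures on the tensor product; but once the formulas are set up correctly, the computation is forced.

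Finally, because $V$ is free as a $\mathbb{Z}$-module, pick a $\mathbb{Z}$-basis $\{v_{b}\}_{b\in B}$. Then $V^{\flat}\otimes R_{\theta}[G]\cong\bigoplus_{b\in B}R_{\theta}[G]$ as left $R_{\theta}[G]$-modules (under the action just described, this is literally a sum of copies of the regular module), which is free, hence projective. Transporting along $\phi^{-1}$ gives that $V\otimes M$ with the semi-diagonal action is projective when $M=R_{\theta}[G]$, and then the reduction of the first paragraph closes the general case. The main obstacle is really only the equivariance check for $\phi$; every other step is purely formal.
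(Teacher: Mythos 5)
The paper does not supply its own proof of this lemma: it is quoted from \cite[Lemma~4.1]{BG00}, and the text merely remarks that the result is proven there in the more general setting of strongly group graded rings, of which skew group rings are a special case. So there is no in-paper argument to compare against, and your proposal has to be judged on its own.

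Your proof is correct and complete. The reduction to $M=R_{\theta}[G]$ is sound because the semi-diagonal action is compatible with direct sums in the $M$-slot, so $V\otimes(-)$ sends a direct summand of a free $R_{\theta}[G]$-module to a direct summand of $\bigoplus_i\bigl(V\otimes R_{\theta}[G]\bigr)$. The untwisting map $\phi(v\otimes rg)=g^{-1}v\otimes rg$ is well-defined since it respects the canonical decomposition $V\otimes R_{\theta}[G]\cong\bigoplus_{g\in G}\bigl(V\otimes R\{g\}\bigr)$ (on the $g$-summand it is just $g^{-1}\otimes\mathrm{id}$), and your equivariance computation is exactly right: one really does need $g^{-1}$ rather than $g$ in the first slot, as $(hg)^{-1}(hv)=g^{-1}v$ is what makes the displayed identity close. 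The final step uses the hypothesis that $V$ is $\mathbb{Z}$-free precisely where it must, to identify $V^{\flat}\otimes R_{\theta}[G]$ with a free $R_{\theta}[G]$-module $\bigoplus_{b\in B}R_{\theta}[G]$. This is the standard untwisting argument, and is in effect the specialization of the strongly-graded-ring argument in \cite{BG00} to the case where each graded piece $R_g=R\{g\}$ contains the explicit invertible element $1_R g$; working directly with the formula $rg\cdot sh=r\theta_g(s)\,gh$ makes the proof more elementary and self-contained than the cited source, at the cost of the extra generality you do not need here.
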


\begin{Lemma}\cite[lemma 4.3]{BG00}\label{Lem:RProjIsRGProj}
Let $M$ be an $R_{\theta}[G]$-module. If $M$ is projective as a left $R$-module, then $\mathbb{Z}[G]\otimes M$ with semi-diagonal action is 
projective as a left $R_{\theta}[G]$-module.
\end{Lemma}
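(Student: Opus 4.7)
The plan is to identify $\mathbb{Z}[G]\otimes M$ equipped with the semi-diagonal action with the extension of scalars $R_\theta[G]\otimes_R M$ along the ring inclusion $R\hookrightarrow R_\theta[G]$, $r\mapsto r\cdot 1_G$, and then invoke the elementary fact that extension of scalars sends projective modules to projective modules. This is the twisted version of the classical untwisting isomorphism $\mathbb{Z}[G]\otimes_{\mathbb{Z}}N\cong \mathbb{Z}[G]\otimes_{\mathbb{Z}}N^{\mathrm{triv}}$ familiar from the diagonal-action setup over an ordinary group ring.

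The preliminary observation is that $R_\theta[G]$ is free of rank $|G|$ as a right $R$-module via the multiplication rule $(sg)\cdot r=s\,\theta_g(r)\,g$, with $R$-basis $\{g\}_{g\in G}$. Consequently the functor $R_\theta[G]\otimes_R(-)$ from left $R$-modules to left $R_\theta[G]$-modules is well-defined; it sends $R$ to $R_\theta[G]$ and commutes with direct sums, so it carries projective $R$-modules to projective $R_\theta[G]$-modules.

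The main step is to construct the $R_\theta[G]$-linear isomorphism
\begin{equation*}
\Phi:R_\theta[G]\otimes_R M\longrightarrow \mathbb{Z}[G]\otimes M,\qquad rg\otimes m\longmapsto g\otimes r\cdot g(m),
\end{equation*}
with inverse given on simple tensors by $g\otimes m\mapsto g\otimes g^{-1}(m)$. Well-definedness of $\Phi$ on the balanced tensor product reduces to the identity $\Phi\bigl(r\theta_g(s)g\otimes m\bigr)=\Phi\bigl(rg\otimes s\cdot m\bigr)$, which is immediate from $\theta_g$ being a ring endomorphism; well-definedness of $\Phi^{-1}$ is automatic since the source has no balancing relations. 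The $R_\theta[G]$-linearity of $\Phi$ is the two-line computation
\begin{equation*}
\Phi\bigl((r'g')(rg)\otimes m\bigr)=g'g\otimes r'\,\theta_{g'}(r)\cdot (g'g)(m)=(r'g')\cdot\Phi(rg\otimes m),
\end{equation*}
where the last equality is exactly the defining formula of the semi-diagonal action. Combining these three ingredients gives the lemma; the only real obstacle is bookkeeping, namely keeping track of the twist $\theta_g$ when passing back and forth between the balanced-tensor relation and the semi-diagonal formula, and being careful about which side of $R_\theta[G]$ carries the $R$-module structure used in $\otimes_R$.
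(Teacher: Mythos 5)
Your proof is correct, and it takes a genuinely different route from the paper. The paper does not prove this lemma directly; it cites \cite[Lemma 4.3]{BG00}, which establishes the result in the more general setting of strongly group graded rings, with skew group rings as a special case. You instead give a self-contained, elementary argument by identifying $\mathbb{Z}[G]\otimes M$ (semi-diagonal action) with the extension of scalars $R_\theta[G]\otimes_R M$ and then invoking the standard fact that $A\otimes_R(-)$ preserves projectivity for any ring map $R\to A$. The explicit untwisting isomorphism $\Phi(rg\otimes m)=g\otimes r\cdot g(m)$, with inverse $g\otimes m\mapsto g\otimes g^{-1}(m)$, checks out: well-definedness on the balanced tensor product uses $r\theta_g(s)g\otimes m = rg\otimes s\cdot m$, which matches $\Phi$ after applying $g\cdot s = \theta_g(s)\cdot g$ inside $M$; the $R_\theta[G]$-linearity computation correctly unwinds $g'(r\cdot g(m)) = \theta_{g'}(r)\cdot (g'g)(m)$; and the inverse verification uses that $\theta_g\circ\theta_{g^{-1}}=\mathrm{id}$ since $\theta$ is a group homomorphism. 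Your approach buys a proof that is readable in place and does not require importing the strongly-graded-ring formalism; what you lose is the generality of the cited result, but for the skew-group-ring case used in this paper that is immaterial. One small point: the preliminary observation that $R_\theta[G]$ is free over $R$ is not actually needed for the projectivity argument (extension of scalars preserves projectives for any ring homomorphism, since it is a left adjoint taking $R$ to $R_\theta[G]$ and commuting with sums), though it is true and harmless.
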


Both lemma \ref{Lem:SemDiaTenIsProj} and \ref{Lem:RProjIsRGProj} are proven for strongly group graded rings in \cite{BG00} of which skew group rings are a special case.

Now we will focus our attention to the special cases $R=\mathcal{H}^n$ (see definition \ref{Def:ArcAlg}), and $G=\mathbb{Z}_p$. 

\begin{Lemma}\cite[Proposition 3]{Kho02}\label{Lem:KhoTanProj}
Let $S \in \mathbb{T}^{(;n)}$ be an $(;n)$-diskular tangle. The $\mathcal{H}^n$-module $C_{\text{Kh}}(S)$ is a finitely generated and projective 
$\mathcal{H}^n$-module.
\end{Lemma}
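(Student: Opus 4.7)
The plan is to analyze $C_{\text{Kh}}(S)$ cube-of-resolutions summand by summand, showing each summand is finitely generated projective over $\mathcal{H}^n$ and invoking the fact that these two properties are preserved by finite direct sums and grading shifts.

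First I would reduce to the case of a single resolution. Since the tangle $S$ has finitely many crossings, say $N$, by the construction of Definition \ref{Def:KhovTangInv} we have, ignoring the differential,
\begin{equation}
C_{\text{Kh}}(S) \;=\; \bigoplus_{\alpha \in \underline{2}^N} V(S^\alpha)\{|\alpha|+N_+-2N_-\}[N_-],
\end{equation}
so it suffices to verify the two properties for each $V(S^\alpha)$, where $S^\alpha$ is the $\alpha$-resolution, a flat $(;n)$-diskular tangle.

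Next I would identify $V(S^\alpha)$ concretely as an $\mathcal{H}^n$-module. Any flat $(;n)$-diskular tangle is isotopic (rel boundary) to a crossingless matching $a_\alpha \in \boldsymbol{B}^{(0;n)}$ together with $c_\alpha$ disjoint closed circles, so the Khovanov TQFT gives
\begin{equation}
\mathcal{F}(\hat{b}\circ S^\alpha) \;\cong\; \mathcal{F}(\hat{b}\circ a_\alpha)\otimes \mathcal{A}^{\otimes c_\alpha}
\end{equation}
for every $b \in \boldsymbol{B}^{(0;n)}$. Summing over $b$ and shifting yields
\begin{equation}
V(S^\alpha) \;\cong\; \Bigl(\bigoplus_{b}\mathcal{F}(\hat{b}\circ a_\alpha)\{n/2\}\Bigr)\otimes_{\mathbb{Z}} \mathcal{A}^{\otimes c_\alpha}.
\end{equation}
The parenthesized factor is, by the definition of the arc algebra (Definition \ref{Def:ArcAlg}), precisely the left ideal $\mathcal{H}^n\cdot e_{a_\alpha}$ generated by the idempotent $e_{a_\alpha}\in \mathcal{H}^n$ corresponding to the crossingless matching $a_\alpha$. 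Because $\mathcal{H}^n = \bigoplus_a \mathcal{H}^n e_a$ as left $\mathcal{H}^n$-modules, $\mathcal{H}^n e_{a_\alpha}$ is a direct summand of $\mathcal{H}^n$, hence finitely generated and projective. Tensoring over $\mathbb{Z}$ with the free abelian group $\mathcal{A}^{\otimes c_\alpha}$ of finite rank preserves both properties, so each $V(S^\alpha)$ is finitely generated and projective as an $\mathcal{H}^n$-module, and therefore so is $C_{\text{Kh}}(S)$.

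The main obstacle is step two: one must verify that the left $\mathcal{H}^n$-action on $V(S^\alpha)$ (induced by gluing $\hat{b}$ onto the outer boundary and applying the Khovanov TQFT) genuinely agrees with the natural $\mathcal{H}^n$-action on $\mathcal{H}^n e_{a_\alpha}\otimes_{\mathbb{Z}} \mathcal{A}^{\otimes c_\alpha}$, where $\mathcal{H}^n$ acts only on the first factor. This amounts to checking that the cobordism stacking which defines multiplication in $\mathcal{H}^n$ is compatible, under the TQFT, with the circles $\gamma_1,\ldots,\gamma_{c_\alpha}$ sitting off to the side (which they do after isotoping them away from the gluing region), and then applying functoriality of $\mathcal{F}$ under disjoint union. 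Once this compatibility is in place the rest is formal, and the lemma recovers Khovanov's Proposition 3 of \cite{Kho02} in the diskular tangle framework of \cite{LLS21}.
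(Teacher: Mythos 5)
Your proposal is correct, but it takes a different route from the paper. The paper's proof is a two-line translation: it observes that the $(;n)$-diskular tangle $S$ corresponds (via the annulus-to-square identification of section~\ref{SSec:DiskTang} and the base-point conventions of Remarks~\ref{Rmk:BasePoint1} and \ref{Rmk:BasePoint}) to a tangle diagram in $[0,1]^2$ in Khovanov's sense, and then simply cites \cite[Proposition~3]{Kho02}. You instead unwind the argument inside the diskular framework: decompose $C_{\text{Kh}}(S)$ over the cube of resolutions, identify each flat resolution $S^\alpha$ with a crossingless matching $a_\alpha$ plus $c_\alpha$ free circles, deduce $V(S^\alpha)\cong \mathcal{H}^n e_{a_\alpha}\otimes_{\mathbb{Z}}\mathcal{A}^{\otimes c_\alpha}$, and conclude by noting that $\mathcal{H}^n e_{a_\alpha}$ is a summand of $\mathcal{H}^n$ and that tensoring over $\mathbb{Z}$ with a finite-rank free abelian group preserves finite generation and projectivity. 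This is in effect a re-derivation of Khovanov's Proposition~3 in the present language, and the compatibility check you flag at the end (that the $\mathcal{H}^n$-action on $V(S^\alpha)$ touches only the matching factor, which holds because the closed circles can be isotoped away from the gluing region before applying $\mathcal{F}$) is exactly the point that makes the identification legitimate. The trade-off is clear: the paper's proof is shorter but relies on the reader accepting the dictionary between diskular tangles and Khovanov's $[0,1]^2$-tangles, while yours is self-contained and makes explicit the projective-module structure ($V(S^\alpha)$ as a sum of copies of the indecomposable projectives $\mathcal{H}^n e_a$) that one actually uses downstream, e.g.\ in Corollary~\ref{Prop:EquiKhovTanProj} and Theorem~\ref{Thm:KhoTwiProj}.
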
 
\begin{proof}
By the correspondence between the tangles diagrams in $[0,1]^2$ and diskular tangles (section \ref{SSec:DiskTang}), we can consider the $(;n)$-diskular 
tangle $S$ as a tangle diagram in $[0,1]^2$. Hence, the lemma follows from \cite[Proposition 3]{Kho02}.
\end{proof}

For $p$-equivariant diskular tangle $(S_1,...,S_p)$ with $S_1=\cdots=S_p\in T^{(;n)}$ (see definition \ref{Def:EquiDiskTang}) we have,

\begin{Corollary}\label{Prop:EquiKhovTanProj}
    The Khovanov chain complex of $C_{\text{Kh}}(S_1,...,S_p)=C_{\text{Kh}}(S_1)\otimes\cdots \otimes C_{\text{Kh}}(S_p)$ is projective as an 
    $\mathcal{H}^n\otimes \cdots\otimes \mathcal{H}^n$-module.
\end{Corollary}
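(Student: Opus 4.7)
The plan is to reduce everything to Lemma \ref{Lem:KhoTanProj}, applied $p$ times, together with the standard fact that the $\mathbb{Z}$-tensor product of projective modules is projective over the tensor product of their ground rings. Since each $S_i$ is an $(;n)$-diskular tangle, Lemma \ref{Lem:KhoTanProj} tells us that $C_{\text{Kh}}(S_i)$ is a finitely generated projective $\mathcal{H}^n$-module (interpreted degree-wise on the chain complex, which is the only way the statement makes sense).

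Next, I would record the following elementary fact: if $P_i$ is a finitely generated projective $R_i$-module for $i=1,\dots,p$, then $P_1\otimes_{\mathbb{Z}}\cdots \otimes_{\mathbb{Z}} P_p$ is a finitely generated projective $R_1\otimes_{\mathbb{Z}}\cdots\otimes_{\mathbb{Z}} R_p$-module. The proof is the standard one: choose $Q_i$ with $P_i\oplus Q_i\cong R_i^{k_i}$, distribute the tensor product, and read off $P_1\otimes\cdots\otimes P_p$ as a direct summand of $(R_1\otimes\cdots\otimes R_p)^{k_1\cdots k_p}$. I would state this as a short sublemma (or simply cite it as a routine fact) so that the corollary follows in one line.

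Applying this with $R_i = \mathcal{H}^n$ and $P_i = C_{\text{Kh}}(S_i)$ (term by term in the chain complex, since $S_1=\cdots=S_p$ are equal as diskular tangles and each $C_{\text{Kh}}(S_i)$ is bounded), we get that
\begin{equation}
C_{\text{Kh}}(S_1,\ldots,S_p)=C_{\text{Kh}}(S_1)\otimes_{\mathbb{Z}}\cdots\otimes_{\mathbb{Z}} C_{\text{Kh}}(S_p)
\end{equation}
is projective over $\mathcal{H}^n\otimes_{\mathbb{Z}}\cdots\otimes_{\mathbb{Z}}\mathcal{H}^n$ in each homological and quantum degree, which is exactly the conclusion.

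There is essentially no hard step here; the only thing I would be careful about is bookkeeping, namely verifying that the $(\mathcal{H}^n\otimes\cdots\otimes\mathcal{H}^n)$-action on the tensor product $C_{\text{Kh}}(S_1)\otimes\cdots\otimes C_{\text{Kh}}(S_p)$ coming from Theorem \ref{Thm:Glue} (the external tensor product of bimodule structures) is the one compatible with the projectivity argument above, and that the finite generation and the bigraded structure are preserved. Once that is in place, the result is immediate. Note that at this stage the $\mathbb{Z}_p$-equivariant structure plays no role; the equivariance will be incorporated later when one passes to the skew group ring $(\mathcal{H}^n\otimes\cdots\otimes\mathcal{H}^n)_{\theta}[\mathbb{Z}_p]$ via Lemma \ref{Lem:RProjIsRGProj}.
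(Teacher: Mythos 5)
Your argument is correct and is exactly what the paper intends: the paper's proof is a one-liner ("immediate from Lemma \ref{Lem:KhoTanProj}"), and you are simply supplying the standard sublemma — that an external tensor product of (finitely generated) projective modules is projective over the tensor product of ground rings — that makes the deduction precise. Same approach, just spelled out.
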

\begin{proof}
	This is immediate from lemma \ref{Lem:KhoTanProj}. 
\end{proof}

The cyclic group $\mathbb{Z}_p$ also acts on both $\mathcal{H}^n\otimes\cdots\otimes \mathcal{H}^n$ and 
$C_{\text{Kh}}(S_1,...,S_p)= C_{\text{Kh}}(S_1)\otimes\cdots\otimes C_{\text{Kh}}(S_p)$ by cyclic permutation, denoted by $\theta$. 
Hence, we can consider $C_{\text{Kh}}(S_1)\otimes\cdots\otimes C_{\text{Kh}}(S_p)$ as a module over 
$\mathcal{R}^n_{\theta}=(\mathcal{H}^n\otimes\cdots\otimes \mathcal{H}^n)_{\theta}[\mathbb{Z}_p]$. 

\begin{Theorem}\label{Thm:KhoTwiProj}
    Given a $p$-equivariant $(;n)$-diskular tangle $(S_1,...,S_p)$, the $\mathcal{R}^n_{\theta}$-module 
    $\mathbb{Z}[\mathbb{Z}_p]\otimes C_{\text{Kh}}(S,...,S)$ with the semi-diagonal action is a projective $\mathcal{R}^n_{\theta}$-module.
\end{Theorem}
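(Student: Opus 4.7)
The plan is to deduce Theorem \ref{Thm:KhoTwiProj} as a direct consequence of Lemma \ref{Lem:RProjIsRGProj} together with Corollary \ref{Prop:EquiKhovTanProj}. The setup I need is $R = \mathcal{H}^n \otimes \cdots \otimes \mathcal{H}^n$ (with $p$ factors), $G = \mathbb{Z}_p$, and $\theta \colon \mathbb{Z}_p \to \mathrm{Aut}_{\text{Ring}}(R)$ the homomorphism given by cyclic permutation of the $p$ tensor factors. With these choices the skew group ring $R_{\theta}[\mathbb{Z}_p]$ is exactly $\mathcal{R}^n_{\theta}$ as defined just before the statement of the theorem.

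The first task is to verify that $M := C_{\text{Kh}}(S_1) \otimes \cdots \otimes C_{\text{Kh}}(S_p)$ is naturally a left $\mathcal{R}^n_{\theta}$-module. The $R$-module structure is the factorwise action coming from each $C_{\text{Kh}}(S_i)$ being a right $\mathcal{H}^n$-module (equivalently left, by the standard opposite convention), and the $\mathbb{Z}_p$-action on $M$ is the cyclic permutation of tensor factors. Because $S_1 = \cdots = S_p$, these actions are well-defined, and the compatibility relation $\theta \cdot (r \cdot m) = \theta(r) \cdot (\theta \cdot m)$ required for $M$ to descend to an $\mathcal{R}^n_{\theta}$-module is automatic: permuting the tensor factors of $M$ and permuting the tensor factors of $R$ are tautologically the same cyclic permutation $\theta$. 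Thus $M$ carries the desired $\mathcal{R}^n_{\theta}$-module structure.

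The second task is to invoke projectivity over $R$. By Corollary \ref{Prop:EquiKhovTanProj}, the tensor product $C_{\text{Kh}}(S_1) \otimes \cdots \otimes C_{\text{Kh}}(S_p)$ is projective as a module over $\mathcal{H}^n \otimes \cdots \otimes \mathcal{H}^n = R$. Now apply Lemma \ref{Lem:RProjIsRGProj} to $M$: since $M$ is projective as an $R$-module, the semi-diagonal $\mathcal{R}^n_{\theta}$-module $\mathbb{Z}[\mathbb{Z}_p] \otimes M$ is projective as an $\mathcal{R}^n_{\theta}$-module, which is exactly the conclusion of Theorem \ref{Thm:KhoTwiProj}.

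I do not expect any serious obstacle; the only subtlety is bookkeeping around the definition of the $\mathcal{R}^n_{\theta}$-action on $M$ (and on $\mathbb{Z}[\mathbb{Z}_p]\otimes M$), which must match the semi-diagonal convention of \eqref{Equ:SemiDiagAct} so that Lemma \ref{Lem:RProjIsRGProj} applies verbatim. Once this identification is made explicit, the proof is essentially a one-line invocation of the two earlier results.
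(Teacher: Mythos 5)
Your proposal is correct and follows exactly the paper's own proof, which simply invokes Corollary \ref{Prop:EquiKhovTanProj} and Lemma \ref{Lem:RProjIsRGProj}. You have spelled out the bookkeeping (identifying $R$, $G$, $\theta$, and checking the $\mathcal{R}^n_\theta$-module compatibility on $M$) that the paper leaves implicit, but the argument is the same.
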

\begin{proof}
    Proof follows immediately from corollary \ref{Prop:EquiKhovTanProj} and lemma \ref{Lem:RProjIsRGProj}.
\end{proof} 

\section{Functoriality of Equivariant Khovanov Homology}\label{Chp:Result}
In this section, we study the maps induced on equivariant Khovanov homology by equivariant cobordisms. 
In section \ref{SSec:EquiFunct}, we prove equivariant Khovanov homology is functorial up to a factor of 
$(\pm 1)^p$ where $p$ is the order of the group. Using that result, we study the map induced by a ribbon 
concordance in section \ref{Sec:EquConcObs}.

\subsection{Equivariant movie moves}
Here, we introduce movie presentations for equivariant cobordisms between periodic links. Also, 
we prove that for equivariantly isotopic cobordisms, their movies are related by equivariant movie moves. 
In what follows, $p$ is a fixed prime integer, and is the order of the group. In an overview, we use 
techniques similar to section \ref{SSec:KhovFunct}.  

\begin{Definition}\label{Def:EquMovie}
    A $p$-\emph{equivariant movie} of an equivariant link cobordism $\Sigma: L_0\rightarrow L_1$, is a 
    finite sequence of periodic link diagrams $\boldsymbol{EM}_{\Sigma}=\{E_i\}_{i=0}^k$, with successive 
    pairs of diagrams related by a $p$-equivariant elementary cobordism (definition \ref{Def:EquEleCob}) 
    localized to $p$ disjoint closed disks in $\mathbb{R}^2\backslash\boldsymbol{0}$ such that the elementary 
    cobordisms correspond under the action of $\mathbb{Z}$. Individual diagrams $E_i$ in the sequence are 
    called \emph{equivariant frames}.
\end{Definition}

Let $\pi:(\mathbb{R}^3\backslash\boldsymbol{z})\times[0,1]\rightarrow \mathbb{R}^2\backslash\boldsymbol{0}$ 
denotes the projection to the $xy$-plane. Fix a smooth equivariant cobordism $\Sigma: L_0\rightarrow L_1$, between periodic links 
$L_0$ and $L_1$. We call $\Sigma$ \emph{generic}, if the projection 
$\rho:(\mathbb{R}^3\backslash\boldsymbol{z})\times [0,1]\rightarrow[0,1]$ restricted to $\Sigma$ is a Morse function.

\begin{Lemma}\label{Lem:EquMovie}
    Any equivariant link cobordism $\Sigma$ can be represented by an equivariant movie.
\end{Lemma}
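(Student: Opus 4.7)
The plan is to reduce the equivariant statement to the non-equivariant Carter--Saito theorem by working $\mathbb{Z}_p$-equivariantly on $\Sigma$ (equivalently, on the quotient), exploiting the fact that $\Sigma$ is disjoint from the fixed annulus $\tilde{U}$. Since the fixed locus of $\tilde{\theta}$ on $(\mathbb{R}^3\setminus\boldsymbol{z})\times[0,1]$ is $\tilde{U}$ and $\Sigma$ avoids $\tilde{U}$, the action of $\mathbb{Z}_p$ restricts to a free action on some open $\tilde{\theta}$-invariant neighborhood $N$ of $\Sigma$. Because $p$ is prime, every $\mathbb{Z}_p$-orbit in $N$ (and in particular on $\Sigma$) has exactly $p$ points.

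First I would make $\rho|_\Sigma$ into an equivariant Morse function. The function $\rho:(\mathbb{R}^3\setminus\boldsymbol{z})\times[0,1]\to[0,1]$ is already $\tilde{\theta}$-invariant since $\tilde{\theta}$ is trivial in the $[0,1]$ factor, so $\rho|_\Sigma$ is $\mathbb{Z}_p$-invariant. A standard averaging/transversality argument in the space of $\mathbb{Z}_p$-invariant perturbations (or equivalently a Carter--Saito-style genericity argument applied to the quotient cobordism $\bar{\Sigma}=\Sigma/\mathbb{Z}_p$ inside the smooth $4$-manifold $N/\mathbb{Z}_p$) produces an arbitrarily small $\tilde{\theta}$-equivariant isotopy of $\Sigma$ after which $\rho|_\Sigma$ is Morse. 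Since $\tilde{\theta}$ preserves $\rho$, the critical set is $\mathbb{Z}_p$-invariant, and since the action on $\Sigma$ is free, each critical value is achieved on a full orbit of exactly $p$ Morse critical points, all of the same Morse index and all at the same height in $[0,1]$.

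Next I would arrange the generic projection $\pi|_\Sigma:\Sigma\to\mathbb{R}^2\setminus\boldsymbol{0}$ simultaneously. By the same equivariant transversality, one can perturb $\Sigma$ in the space of $\mathbb{Z}_p$-invariant isotopies (again working on $\bar{\Sigma}$) so that at all but finitely many $t\in[0,1]$, $\pi|_{\Sigma\cap(\mathbb{R}^3\setminus\boldsymbol{z})\times\{t\}}$ is an immersion with only transverse double points, i.e.\ a periodic link diagram. The remaining finitely many exceptional times correspond to the familiar Reidemeister-type catastrophes (triple points, tangential crossings, type~I cusps). Equivariance forces the exceptional configurations to occur in $\mathbb{Z}_p$-orbits of $p$ disjoint sites in $\mathbb{R}^2\setminus\boldsymbol{0}$ that correspond under rotation by $2\pi/p$. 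A final generic perturbation makes all the critical values of $\rho|_\Sigma$ distinct from one another and from the Reidemeister times; equivariance is preserved because we only need to adjust the heights, which is symmetric by design.

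Finally, I would read off the equivariant movie. Choose regular values $0=t_0<t_1<\cdots<t_k=1$ separating all critical values. For each $i$, let $E_i$ be the periodic link diagram obtained by projecting $\Sigma\cap((\mathbb{R}^3\setminus\boldsymbol{z})\times\{t_i\})$. Between $t_i$ and $t_{i+1}$ exactly one critical value occurs, producing $p$ simultaneous elementary events (a birth/saddle/death orbit from a Morse critical-point orbit, or a Reidemeister-move orbit) localized in $p$ disjoint disks in $\mathbb{R}^2\setminus\boldsymbol{0}$ that correspond under $\mathbb{Z}_p$. This is precisely a $p$-equivariant elementary cobordism in the sense of Definition~\ref{Def:EquEleCob}, so $\{E_i\}_{i=0}^{k}$ is an equivariant movie for $\Sigma$. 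The main technical obstacle is the equivariant genericity in the first two steps; I expect this to go through cleanly because the action is free on a neighborhood of $\Sigma$, so transversality statements on $\bar{\Sigma}$ lift tautologically to $\mathbb{Z}_p$-invariant statements on $\Sigma$.
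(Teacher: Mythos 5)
Your proposal is correct and is essentially the paper's approach: both reduce to the quotient cobordism $\overline{\Sigma}=\Sigma/\mathbb{Z}_p$ in $(\mathbb{R}^3\setminus\boldsymbol{z})\times[0,1]$, put it in generic position away from $\boldsymbol{z}\times[0,1]$, obtain an annular movie, and pull back by the quotient map to get an equivariant movie. The paper compresses the genericity step into a citation of an annular-movie lemma from the literature (Grigsby--Licata--Wehrli), whereas you spell out the equivariant transversality/Morse-theoretic content directly; either is fine, but the underlying mechanism (free action on a neighborhood of $\Sigma$, so annular genericity downstairs lifts tautologically to $\mathbb{Z}_p$-equivariant genericity upstairs) is the same.
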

\begin{proof}
Let $\overline{\Sigma}$ be the quotient cobordism in $(\mathbb{R}^3\backslash\boldsymbol{z})\times [0,1]$. After small perturbations of 
$\overline{\Sigma}$ away from the $\boldsymbol{z}\times [0,1]$, $\overline{\Sigma}$ is a generic cobordism. By \cite[Lemma 15]{GLW18}, 
the link cobordism $\overline{\Sigma}\subset \mathbb{R}^3\times [0,1]$ can be presented by an annular movie 
$\boldsymbol{M}_{\overline{\Sigma}}=\{\overline{E}_{t_i}\}$ (A movie in which every frame avoids the origin). Pulling back the movie 
$\boldsymbol{M}_{\overline{\Sigma}}$ with the quotient map induces an equivariant movie $\boldsymbol{M}_{\Sigma}$ as desired.
\end{proof}

An \emph{equivariant isotopy} refers to a 1-parameter family of diffeomorphisms $f_s$ of 
$(\mathbb{R}^3\backslash\boldsymbol{z})\times[0,1]$ for $s\in [0,1]$, such that $f_0=id$ and $f_s$ is the identity on the boundary 
for all $s$, and equivariant under the action $\tilde{\theta}$. Hence, for all $s$, the image of $f_s$ restricted to $\Sigma$ is 
a $p$-equivariant link cobordism.

Fix periodic links $L_0$ and $L_1$ and equivariant cobordisms $\Sigma, \Sigma':L_0\rightarrow L_1$. We call $\Sigma$ and 
$\Sigma'$ \emph{equivariantly isotopic}, if there is an equivariant isotopy $f_s$ such that $f_1(\Sigma)=\Sigma'$.

 \begin{figure}[ht]
	\centering
	\def\svgwidth{1\textwidth}
\begingroup%
  \makeatletter%
  \providecommand\color[2][]{%
    \errmessage{(Inkscape) Color is used for the text in Inkscape, but the package 'color.sty' is not loaded}%
    \renewcommand\color[2][]{}%
  }%
  \providecommand\transparent[1]{%
    \errmessage{(Inkscape) Transparency is used (non-zero) for the text in Inkscape, but the package 'transparent.sty' is not loaded}%
    \renewcommand\transparent[1]{}%
  }%
  \providecommand\rotatebox[2]{#2}%
  \newcommand*\fsize{\dimexpr\f@size pt\relax}%
  \newcommand*\lineheight[1]{\fontsize{\fsize}{#1\fsize}\selectfont}%
  \ifx\svgwidth\undefined%
    \setlength{\unitlength}{802.20472441bp}%
    \ifx\svgscale\undefined%
      \relax%
    \else%
      \setlength{\unitlength}{\unitlength * \real{\svgscale}}%
    \fi%
  \else%
    \setlength{\unitlength}{\svgwidth}%
  \fi%
  \global\let\svgwidth\undefined%
  \global\let\svgscale\undefined%
  \makeatother%
  \begin{picture}(1,0.27208481)%
    \lineheight{1}%
    \setlength\tabcolsep{0pt}%
    \put(0,0){\includegraphics[width=\unitlength,page=1]{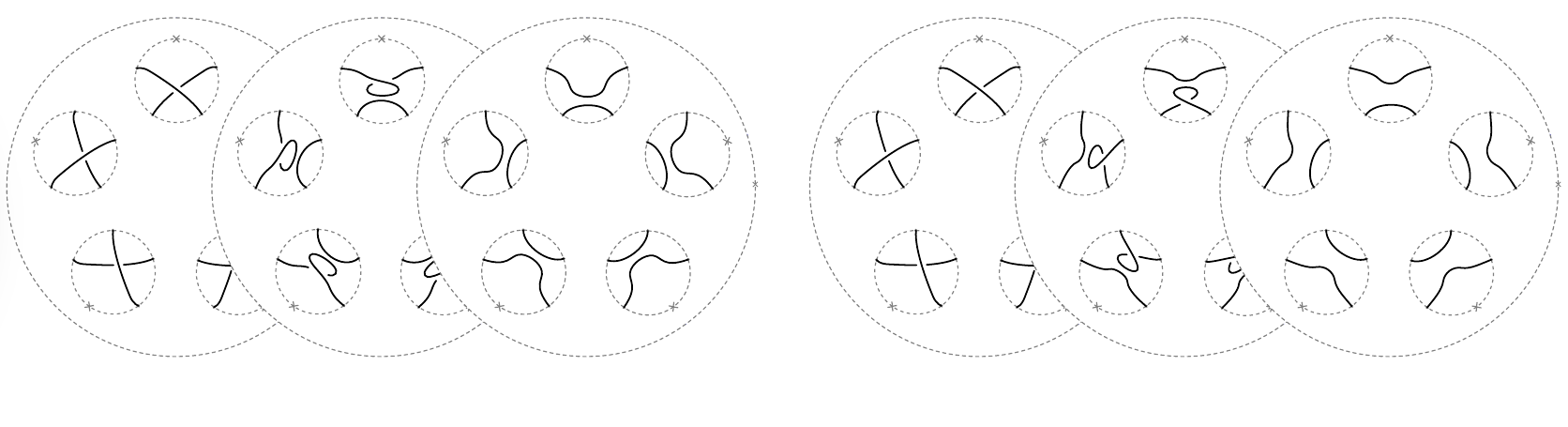}}%
    \put(0.49608028,0.14777389){\color[rgb]{0,0,0}\makebox(0,0)[t]{\lineheight{1.25}\smash{\begin{tabular}[t]{c}$\cong$\end{tabular}}}}%
  \end{picture}%
\endgroup%

	\vspace{-0.7cm}\caption[Example of an equivariant movie move]{This figure shows an equivariant movie move. The equivariant movie on the 
    left is equivalent to the equivariant movie on the right. These two movies differ by commutativity of a saddle move and 
    Reidemeister move of type I.}
    \label{Fig:EquDiskMM}
\end{figure}

Now we can prove the theorem \ref{Thm:EquiMM}.

\begin{proof}[Proof of theorem \ref{Thm:EquiMM}] By definition \ref{Def:EquiCob}, if $\Sigma$ is an equivariant link cobordism 
    from $L_0$ to $L_1$, its quotient $\overline{\Sigma}$ by the extended action $\tilde{\theta}$ on $(\mathbb{R}^3\backslash\boldsymbol{z})\times[0,1]$ 
    is an annular link cobordism $\overline{\Sigma}\subset (\mathbb{R}^3\backslash \boldsymbol{z})\times [0,1]$. 
By assumption $\Sigma$ and $\Sigma'$ are two equivariant cobordisms. Their quotient cobordisms $\overline{\Sigma}$ and $\overline{\Sigma'}$ 
are isotopic annular link cobordisms in $(\mathbb{R}^3\backslash \boldsymbol{z})\times[0,1]$. By \cite[Lemma A.2]{GLW18}, $\overline{\Sigma}$ 
and $\overline{\Sigma'}$ are related by a finite sequence of movie moves localized to a disks away from the origin in $\mathbb{R}^2$. 
The pull back of those movie moves are a sequence of equivariant movie moves from $\boldsymbol{EM}_{\Sigma}$ to $\boldsymbol{EM}_{\Sigma'}$.
\end{proof}

\subsection{Functoriality of Equivariant Khovanov Homology}\label{SSec:EquiFunct}
In this section, we prove theorem \ref{Thm:EquiFunct}. First we mention a few abstract homological algebra facts. In what follows, unless 
otherwise stated, we use $\mathbb{Z}$ as our coefficient ring.

\begin{Lemma}\label{Lem:ChaMapH1}
    Suppose $C$, $D$ are chain complexes, $f_0, f_1:C\rightarrow D$ are chain homotopic maps by a chain homotopy $h$. This information is 
    equivalent to a chain map $H:I\otimes C\rightarrow D$, where $I$ is the chain complex
    \begin{equation}
        \begin{tikzcd}
            I^{1}=\mathbb{Z}\{ s\} \arrow[r] & I^{0}=\mathbb{Z}\{s_0,s_1\}\\
            s \arrow[r,mapsto] &s_1-s_0,
        \end{tikzcd}
    \end{equation}
    such that $H(s_i\otimes x)=f_i(x)$, for $i=0,1$, moreover, $H(s\otimes x)=h(x)$.
\end{Lemma}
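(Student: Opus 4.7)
The plan is to establish a two-way correspondence: the data of a pair of chain homotopic maps $(f_0, f_1, h)$ with $f_1 - f_0 = d_D h + h d_C$ and the data of a chain map $H\colon I\otimes C \to D$ with the prescribed values on generators are interchangeable, with the two passages inverse to one another. The complex $I$ is just the cellular chain complex of the unit interval: two generators in degree $0$ and one in degree $1$ with boundary sending $s \mapsto s_1 - s_0$, so $I \otimes C$ has a free $\mathbb{Z}$-basis given by $s_0 \otimes x$, $s_1 \otimes x$, and $s\otimes x$ as $x$ ranges over a basis of $C$. The Leibniz rule on the tensor product gives
\begin{equation*}
d_{I\otimes C}(a \otimes x) = d_I a \otimes x + (-1)^{|a|} a \otimes d_C x,
\end{equation*}
which controls every computation.

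For the forward direction, given $f_0, f_1$ and the homotopy $h$, I would define $H$ on generators by $H(s_i \otimes x) = f_i(x)$ and $H(s \otimes x) = h(x)$, extended $\mathbb{Z}$-linearly, and verify it is a chain map one generator at a time. On $s_i \otimes x$ one has $d_I s_i = 0$, so the identity $d_D H = H d_{I\otimes C}$ reduces to $d_D f_i(x) = f_i(d_C x)$, i.e. the fact that each $f_i$ is a chain map. On $s \otimes x$, the Leibniz rule expands the left-hand side as $d_D h(x)$ and the right-hand side as $H\big((s_1 - s_0)\otimes x - s \otimes d_C x\big) = f_1(x) - f_0(x) - h(d_C x)$, so the chain-map equation is precisely the chain-homotopy relation $f_1 - f_0 = d_D h + h d_C$ that was assumed.

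For the reverse direction, starting from a chain map $H$, I would set $f_i(x) := H(s_i \otimes x)$ and $h(x) := H(s \otimes x)$. The same computations read in reverse then show $f_0, f_1$ are chain maps (from the $s_i$ generators, where $d_I$ vanishes) and that $h$ is a chain homotopy between them (from the $s$ generator). The two constructions are mutually inverse by inspection on generators, proving the stated equivalence.

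The only real obstacle is bookkeeping of signs: the paper uses cohomological conventions for the Khovanov complex while the complex $I$ as displayed has its differential going from higher to lower degree, so before computing one must fix whether $|s|$ is $+1$ or $-1$ in the Koszul sign $(-1)^{|s|}$ appearing in $d_{I\otimes C}$. Once this sign convention is pinned down consistently, the argument above is mechanical and no further ideas are needed.
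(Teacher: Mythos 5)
Your proof is correct, and it is the standard textbook argument: a chain map out of $I\otimes C$ is determined by its restrictions to the three summands $s_0\otimes C$, $s_1\otimes C$, $s\otimes C$, and the chain-map condition on these pieces unwinds, via the Leibniz rule, to exactly ``$f_0,f_1$ are chain maps'' and ``$h$ is a homotopy between them.'' The paper itself declares the lemma trivial and leaves the proof as an exercise, so there is nothing to compare against; your write-up supplies what the paper omits, and your caveat about fixing the degree of $s$ (and hence the Koszul sign in $d_{I\otimes C}$) before computing is exactly the one point worth being careful about, since the paper mixes cohomological indexing for Khovanov complexes with a differential on $I$ written as lowering degree.
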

The proof of the lemma above is trivial and left as an exercise for the reader. Given $f_0, f_1$ as above, then 
$f_0^{\otimes p}, f_1^{\otimes p}: C^{\otimes p}\rightarrow D^{\otimes p}$ are also chain homotopic with the chain homotopy given by
\begin{equation}\label{Equ:TenChaHomtp}
    k=\sum_{i=0}^{p-1}\pm f_1^{\otimes i}\otimes h\otimes f_0^{\otimes p-1-i}: C^{\otimes p}\rightarrow D^{\otimes p}[1]
\end{equation}
where the sign comes from the Koszul sign convention for tensor product of chain complexes.

Similarly, we introduce a chain map using the chain map $H$ (lemma \ref{Lem:ChaMapH}) as follows.
\begin{align}\label{Equ:ChaMapK}
    K&:(I\otimes C)^{\otimes p} \longrightarrow D^{\otimes p}\\\nonumber
    &K(t_1\otimes x_1\otimes \cdots \otimes t_p\otimes x_p) = H(t_1\otimes x_1)\otimes \cdots \otimes H(t_p\otimes x_p) \\\nonumber
\end{align}
Equivalently, we can rewrite the equation (\ref{Equ:ChaMapK}) as $K:I^{\otimes p}\bigotimes C^{\otimes p}\longrightarrow D^{\otimes p}$.

\begin{Lemma}\label{Lem:ChaMapExt}
    Assume $(C,d)$ and $(C',d')$ are chain complexes of $R$-modules and $n$ is an integer. Let $\{g_i:C_i\rightarrow C'_{i}\}$ for $i\leq n$ be a family of group homomorphisms such that $g_i\circ d=d'\circ g_i$. If for $i\geq n$, the $C_i$ are projective and $H_i(C')=0$, then we can extended the $g_i$ to a chain map $g:C\rightarrow C'$. Moreover, $g$ is unique up to chain homotopy.
\end{Lemma}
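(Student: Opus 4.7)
The plan is to recognize this as essentially the classical comparison theorem for projective resolutions, but with the projectivity and acyclicity hypotheses holding only in a range $i \ge n$ rather than globally. I would prove the two assertions (existence and uniqueness) by parallel inductive constructions, using the given data in degrees $i \le n$ as the base case in each induction.

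\emph{Existence.} I would proceed by induction on $k \ge n$, assuming compatible maps $g_0, \ldots, g_k$ have been defined. To construct $g_{k+1} : C_{k+1} \to C'_{k+1}$, note first that $d' \circ (g_k \circ d) = g_{k-1} \circ d \circ d = 0$, so the composition $g_k \circ d : C_{k+1} \to C'_k$ lands in $\ker(d') \subseteq C'_k$. The hypothesis $H_k(C') = 0$ gives $\ker(d') = \operatorname{im}(d')$, so the differential $d' : C'_{k+1} \twoheadrightarrow \ker(d')$ is surjective onto this kernel. Because $C_{k+1}$ is projective (here I use $k+1 \ge n$), there is a lift $g_{k+1} : C_{k+1} \to C'_{k+1}$ with $d' \circ g_{k+1} = g_k \circ d$, which completes the inductive step.

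\emph{Uniqueness.} If $g$ and $g'$ are two such extensions, then $\phi = g - g'$ is a chain map that vanishes in every degree $i \le n$. I would set $h_i = 0$ for $i \le n$ and construct $h_i : C_i \to C'_{i+1}$ by induction on $i > n$ so as to satisfy the chain-homotopy identity $\phi_i = d' \circ h_i + h_{i-1} \circ d$. At each stage one verifies that $\phi_i - h_{i-1} \circ d$ maps into $\ker(d') = \operatorname{im}(d')$ (by applying $d'$ and using the inductive identity at level $i-1$), and then projectivity of $C_i$ produces the desired lift.

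The argument is almost purely formal; the main point requiring care is bookkeeping of the boundary of the induction, namely verifying that the acyclicity hypothesis applies at exactly the degrees where one needs to invoke it (both when constructing the first new $g_{n+1}$ and when extending $h$ out of the range where it was set to zero). I do not expect a genuine obstacle beyond indexing discipline, which is why this is stated as a lemma rather than a theorem.
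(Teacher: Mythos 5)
Your proposal is correct and follows essentially the same approach as the paper's proof: both are the standard comparison-theorem induction, lifting through the surjection $d'\colon C'_{k+1}\twoheadrightarrow \ker(d')=\operatorname{im}(d')$ (using $H_k(C')=0$) via projectivity of $C_{k+1}$, with a parallel induction for the homotopy. Your write-up is in fact slightly cleaner than the paper's, which conflates the notations $h$ and $\phi$ for the homotopy.
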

\begin{proof}
By induction, we can assume for $k\geq n$ we have extended the $g_i$ for $i\leq k$. For $k+1$ we have a commutative diagram 
\begin{equation}
    \begin{tikzcd}
    C_{k+1}\arrow[r,"d"]\arrow[d,dashed, "g_{k+1}"] & C_k \arrow[r,"d"]\arrow[d,"g_k"] & C_{k-1} \arrow[d,"g_{k-1}"]\\
    C'_{k+1}\arrow[r,"d'"] & C'_{k} \arrow[r,"d'"] & C'_{k-1}\\
\end{tikzcd}
\end{equation}
where $d'\circ g_k\circ d=g_{k-1}\circ d\circ d=0$. As $C_{k+1}$ is projective and the bottom row is exact, the map $g_{k+1}$ (dashed arrow) exists. 
Suppose now that $\mathfrak{g}:C\rightarrow C'$ is a second extension of $\{g_i\}_{i\leq n}$. Assume by induction the  $\phi_i:C_i\rightarrow C'_{i+1}$ have been defined for $i\leq k$, where $k\geq n$, and  $d'h_{i}+h_{i-1}d=g_i-\mathfrak{g}_i$.
\begin{equation}\label{Equ:ComDiaPar}
    \begin{tikzcd}[column sep=large]
    & C_{k+1} \arrow[r,"d"]\arrow[d,"g_{k}-\mathfrak{g}_{k}" near start]\arrow[ld,dashed,"\phi_{k+1}"] & C_{k} \arrow[d,"g_{k-1}-\mathfrak{g}_{k-1}" near start]\arrow[r,"d"]\arrow[ld,"\phi_{k}"]& C_{k-1}\arrow[ld,"\phi_{k}"]\\
    C'_{k+2}\arrow[r,"d'"'] & C'_{k+1} \arrow[r,"d'"'] & C'_{k}&\\
\end{tikzcd}
\end{equation}
By assumptions, we have
\begin{align*}
    d'\circ \phi_k\circ d &= (g_{k}-\mathfrak{g}_k-\phi_{k-1}\circ d)\circ d\\
    &= g_{k}\circ d-\mathfrak{g}_k\circ d-\phi_{k-1}\circ d\circ d\\
    &= d'\circ g_{k}-d'\circ \mathfrak{g}_k=d'\circ (g_{k}- \mathfrak{g}_k)\\
\end{align*}
As $H_i(C')=0$ for $i\geq n$, the bottom row in diagram \ref{Equ:ComDiaPar} is exact. Also, $C_k$ is projective, and this implies the desired map $\phi_{k+1}$ exists.
\end{proof}

We define an action of $\mathbb{Z}_p$ on $C^{\otimes p}$ by cyclic permutation of factors i.e., $\mathbb{Z}_p$ acts by 
\begin{equation}
    \theta\cdot( x_1\otimes \cdots \otimes x_p)=(-1)^{|x_p|(|x_1|+\cdots |x_{p-1}|)}x_p\otimes x_1\otimes \cdots \otimes x_{p-1},
\end{equation} 
where $|x_i|=\text{gr}_h(x_i)$ denotes the homological degree of the $x_i\in C$. Note that the homotopy in equation (\ref{Equ:TenChaHomtp}) is not equivariant.

For a ring $R$, consider the category $\textbf{Ch}_{R}$ of chain complexes of $R$-modules. The morphisms of $\textbf{Ch}_{R}$ are chain maps. The homotopy category $\mathcal{K}$ of $\textbf{Ch}_{R}$ is defined as follows: The objects of $\mathcal{K}$ are chain complexes (the same as the objects of $\textbf{Ch}_{R}$) and the morphisms of $\mathcal{K}$ are the equivalence classes of chain maps in $\textbf{Ch}_{R}$ up to chain homotopy. One can check that $\mathcal{K}$ is well defined as a category. By $\mathcal{D}$, we denote the \textit{derived category} of $\mathcal{K}$ defined to be the localization $Q^{-1}\mathcal{K}$ at the collection $Q$ of quasi-isomorphisms.

\begin{Lemma}\cite[Corollary 10.4.7]{Wei94}\label{Lem:HomAndDer}
For a bounded above chain complex $M$ of projective $R$-modules, there is an isomorphism
    \begin{equation}
        \text{Hom}_{\mathcal{K}}(M, N)\cong \text{Hom}_{\mathcal{D}}(M,N)
    \end{equation}
for every $R$-module $N$.
\end{Lemma}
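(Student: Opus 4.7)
The plan is to establish the bijection by analyzing morphisms in $\mathcal{D} = Q^{-1}\mathcal{K}$ via the calculus of fractions, combined with the fact that bounded above complexes of projectives admit a lifting property against quasi-isomorphisms. The canonical functor $Q:\mathcal{K}\rightarrow\mathcal{D}$ induces a natural map $Q_{*}:\mathrm{Hom}_{\mathcal{K}}(M,N)\rightarrow\mathrm{Hom}_{\mathcal{D}}(M,N)$, and the task is to show that $Q_{*}$ is both surjective and injective.

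The central technical tool is the following claim: if $s:M'\rightarrow M$ is a quasi-isomorphism and $M$ is a bounded above chain complex of projective $R$-modules, then there is a chain map $t:M\rightarrow M'$ with $s\circ t$ chain homotopic to $\mathrm{id}_{M}$. To establish this, I would consider the mapping cone $\mathrm{Cone}(s)$, which is acyclic precisely because $s$ is a quasi-isomorphism. Then the degreewise construction of $t$ is carried out by applying Lemma \ref{Lem:ChaMapExt}: starting with the identity in sufficiently high degree (where $M$ vanishes by boundedness), I inductively lift $\mathrm{id}_{M_n}$ through $s$ in each degree using the projectivity of the components $M_n$ together with the acyclicity of $\mathrm{Cone}(s)$, which ensures the relevant obstruction vanishes. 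The same lemma produces the chain homotopy between $s\circ t$ and $\mathrm{id}_{M}$ as the uniqueness statement applied to the two extensions.

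Granted this tool, surjectivity of $Q_{*}$ proceeds as follows: every morphism in $\mathrm{Hom}_{\mathcal{D}}(M,N)$ is represented by a roof $M\xleftarrow{s}M'\xrightarrow{f}N$ with $s$ a quasi-isomorphism. Using the section $t$ above, the roof becomes equivalent in $\mathcal{D}$ to the honest chain map $f\circ t:M\rightarrow N$, so $Q_{*}$ hits every morphism. For injectivity, suppose $f_0,f_1:M\rightarrow N$ are chain maps with $Q(f_0)=Q(f_1)$. By the calculus of fractions, there exists a quasi-isomorphism $s:M''\rightarrow M$ such that $f_0\circ s$ and $f_1\circ s$ are chain homotopic in $\mathcal{K}$. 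Applying the tool to $s$ produces $t:M\rightarrow M''$ with $s\circ t\sim\mathrm{id}_{M}$, and then the chain of homotopies
\begin{equation}
f_0\sim f_0\circ s\circ t\sim f_1\circ s\circ t\sim f_1
\end{equation}
shows $f_0$ and $f_1$ were already homotopic, so $Q_{*}$ is injective.

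The main obstacle is verifying that the calculus of fractions applies in $\mathcal{K}$ with respect to the quasi-isomorphisms (a standard but nontrivial verification using mapping cones), and carefully executing the inductive lifting argument for $t$: the induction must be seeded in a degree above the support of $M$, and at each stage one must produce a lift using projectivity of $M_n$ against a surjection coming from the acyclicity of $\mathrm{Cone}(s)$. Everything else is formal manipulation with the roof calculus.
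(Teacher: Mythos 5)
The paper offers no proof of this lemma; it is cited directly from Weibel~\cite{Wei94} (Corollary~10.4.7). Your proposal reconstructs the standard argument behind that citation, and the overall plan --- establish a homotopy section for any quasi-isomorphism $s:M'\to M$ onto a bounded complex of projectives $M$, then use the roof calculus to show $Q_*:\mathrm{Hom}_{\mathcal{K}}(M,N)\to\mathrm{Hom}_{\mathcal{D}}(M,N)$ is bijective --- is correct. The surjectivity and injectivity arguments via roofs are phrased exactly as they should be.

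The one step to tighten is the claim that Lemma~\ref{Lem:ChaMapExt} performs the ``lift of $\mathrm{id}_M$ through $s$.'' As stated, that lemma extends a partially defined chain map $C\to C'$; it does not directly produce a factorization of the identity through a quasi-isomorphism. The clean route, which you gesture at by invoking $\mathrm{Cone}(s)$, is to apply the lemma to the canonical inclusion $\iota:M\hookrightarrow\mathrm{Cone}(s)$: this is a chain map from a complex with projective terms to an acyclic complex, so the uniqueness clause of Lemma~\ref{Lem:ChaMapExt}, applied to $\iota$ and the zero map (both vanish in every degree where $M$ does), supplies a null-homotopy $\Phi$ of $\iota$; writing $\Phi_n=(t_n,h_n):M_n\to M'_n\oplus M_{n+1}$ and unpacking $D\Phi+\Phi d=\iota$ shows that $t$ is a chain map and $h$ realizes $s\circ t\simeq\mathrm{id}_M$. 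You should also double-check the direction of the induction: Lemma~\ref{Lem:ChaMapExt} is seeded on the side where the source vanishes and extends toward increasing index, so the boundedness hypothesis (``bounded above chain complex'') must be read in the cohomological convention the paper uses throughout (as for $C^i_{\mathrm{Kh}}$); with the opposite reading, the base case of your induction sits on the wrong end of the complex.
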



To construct an equivariant chain homotopy between chain maps $f_0^{\otimes p}$ and $f_1^{\otimes p}$, we need to define a chain map
\begin{equation}\label{Equ:EquiChaHom}
    \widehat{H}: E\mathbb{Z}_p\otimes I \longrightarrow I^{\otimes p}
\end{equation}
satisfying
\begin{equation}
    \widehat{H}_k(\alpha\otimes w) = \alpha\cdot \widehat{H}_k(1\otimes w) \nonumber
\end{equation}
for $w\in(I^{\otimes p})_k$, $\alpha\in\mathbb{Z}[\mathbb{Z}_p]$, and $k\geq 0$. First, we define the following morphism at homological degree $k=0,1$. 
\begin{align}
    &\widehat{H}_0(1\otimes s_j) = s_j\otimes \cdots \otimes s_j \quad\quad (\text{for } j=0,1)\nonumber\\
    &\widehat{H}_1(1\otimes s_j) = 0 \quad\quad (\text{for } j=0,1)\nonumber\\
    &\widehat{H}_1(1\otimes s) =\sum_{i=1}^{p}s_1^{\otimes p-i-1}\otimes s \otimes s_0^{\otimes i}\nonumber\\
    &\widehat{H}_k(\theta^l\otimes w) = \theta^l\cdot \widehat{H}_k(1\otimes w) \quad (\forall w\in I,\; k=0,1, \text{ and } l=0,\ldots, p-1)\nonumber
\end{align}
where $\theta$, acts by cyclic permutation. We use the partial information at $k=0,1$ to extend $\widehat{H}$ to every homological degree by lemma \ref{Lem:ChaMapExt}.
\begin{Lemma}\label{Lem:ChaMapH}
The map $\widehat{H}$ defined above extends to a chain map.
\end{Lemma}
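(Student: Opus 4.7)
The plan is to apply Lemma \ref{Lem:ChaMapExt} with $R=\mathbb{Z}[\mathbb{Z}_p]$ to extend the partial data at homological degrees $0$ and $1$ to all higher degrees. First I would observe that $E\mathbb{Z}_p\otimes I$ is a chain complex of free $\mathbb{Z}[\mathbb{Z}_p]$-modules: each $(E\mathbb{Z}_p)_k$ is a free rank-one $\mathbb{Z}[\mathbb{Z}_p]$-module, and tensoring with the free abelian groups $I^0,I^1$ preserves freeness (the $\mathbb{Z}_p$-action is through the first factor, as $I$ carries the trivial action). For the target, $I$ is chain homotopy equivalent to $\mathbb{Z}$ concentrated in degree zero, so by the K\"unneth formula the power $I^{\otimes p}$ is quasi-isomorphic to $\mathbb{Z}$; in particular $H_k(I^{\otimes p})=0$ for all $k\geq 1$. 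Finally, $\widehat{H}_0$ and $\widehat{H}_1$ are $\mathbb{Z}[\mathbb{Z}_p]$-linear by the equivariance relation $\widehat{H}_k(\theta^l\otimes w)=\theta^l\cdot\widehat{H}_k(1\otimes w)$, which serves precisely as the definition of the maps on an arbitrary element from their values on the generators $1\otimes w$.

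Next I would verify the partial chain condition $d'\circ\widehat{H}_1=\widehat{H}_0\circ d$ on the three $\mathbb{Z}[\mathbb{Z}_p]$-generators of $(E\mathbb{Z}_p\otimes I)_1$. For $1\otimes s_j$ with $1\in(E\mathbb{Z}_p)_1$, the tensor differential gives $d(1\otimes s_j)=(1-\theta)\otimes s_j$, so $\widehat{H}_0\circ d(1\otimes s_j)=(1-\theta)\cdot s_j^{\otimes p}=0$, since $s_j^{\otimes p}$ is fixed by cyclic permutation (the Koszul sign is trivial as $|s_j|=0$); on the other side $d'\circ\widehat{H}_1(1\otimes s_j)=d'(0)=0$. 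For $1\otimes s$ with $1\in(E\mathbb{Z}_p)_0$, $d(1\otimes s)=1\otimes(s_1-s_0)$ so $\widehat{H}_0\circ d(1\otimes s)=s_1^{\otimes p}-s_0^{\otimes p}$; on the other side, applying $d'$ to each summand of $\widehat{H}_1(1\otimes s)$ (a tensor with $s$ in one of the $p$ positions, $s_1$'s to its left and $s_0$'s to its right) produces a pair of consecutive terms that telescope across the sum to the same value $s_1^{\otimes p}-s_0^{\otimes p}$. With these verifications in hand, Lemma \ref{Lem:ChaMapExt} applied in the category of $\mathbb{Z}[\mathbb{Z}_p]$-modules with $n=1$ produces the desired chain map extension, and equivariance of the extension is automatic because each inductive lift is carried out in that same category.

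The main obstacle is the telescoping computation at $1\otimes s$, where one applies the tensor-product differential across a sum of $p$ terms. This simplifies considerably because the $s_j$ factors have degree zero: all Koszul signs degenerate to $+1$, the differential acts only on the single $s$ factor in each summand, and the resulting consecutive pair of terms telescopes cleanly to $s_1^{\otimes p}-s_0^{\otimes p}$. Beyond that, one should verify the implicit claim that the cyclic permutation action on $I^{\otimes p}$ is compatible with the tensor-product differential, making $I^{\otimes p}$ a chain complex of $\mathbb{Z}[\mathbb{Z}_p]$-modules; this requires only that Koszul signs are preserved under cyclic permutation of the tensor factors, which is straightforward in our low-degree setting. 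Once these two points are verified, the extension is purely formal.
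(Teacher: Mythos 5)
Your proof is correct and follows essentially the same approach as the paper: verify the partial chain condition at degrees $0$ and $1$, then invoke Lemma~\ref{Lem:ChaMapExt} using projectivity of $(E\mathbb{Z}_p\otimes I)_k$ and acyclicity of $I^{\otimes p}$ in positive degrees. You spell out a few points the paper leaves tacit — the explicit computation $(1-\theta)\cdot s_j^{\otimes p}=0$, the observation that the differential on $I^{\otimes p}$ is $\mathbb{Z}[\mathbb{Z}_p]$-linear under the signed cyclic action, and the remark that equivariance of the inductive lifts comes for free from working in the category of $\mathbb{Z}[\mathbb{Z}_p]$-modules — but these are elaborations of, not departures from, the paper's argument.
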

\begin{proof}
    At homological degree $k=0,1$, $\widehat{H}$ commutes with the differentials.
\begin{align}
        d\circ \widehat{H}_{1} (1\otimes s_j) &= 0 =  \widehat{H}_{0}\circ d(1\otimes s_j)\quad\quad (\text{for } j=0,1)\nonumber\\
        d\circ \widehat{H}_1(\theta^i\otimes s_j) &=0=\widehat{H}_{0}\circ d(\theta^i\otimes s_j)\quad\quad (\text{for } j=0,1)\nonumber\\
        d\circ \widehat{H}_{1} (1\otimes s) &= d(\sum_{i=0}^{p-1}s_1^{\otimes p-i-1}\otimes s \otimes s_0^{\otimes i}) 
        =\sum_{i=0}^{p-1} s_1^{\otimes p-i-1}\otimes (s_1-s_0) \otimes s_0^{\otimes i}\nonumber\\
        &=s_1^{\otimes p} -s_0^{\otimes p}\nonumber\\
        & =\widehat{H}_0(1\otimes s_1-1\otimes s_0)= \widehat{H}_0\circ d(1\otimes s)\nonumber
\end{align} 
for $i=1,\cdots p-1$. The same relation will be hold for $\widehat{H}_1(\theta^i\otimes s)$ for $i=1,\cdots,p-1$. By definition, $(E\mathbb{Z}_p\otimes I)_k$ 
is a projective $\mathbb{Z}[\mathbb{Z}_p]$-module for $k\geq 0$, and $H_k(I^{\otimes p})=0$ for $k\geq 1$. 
Hence by lemma \ref{Lem:ChaMapExt}, we can extend the maps $\widehat{H}_1$ and $\widehat{H}_0$ to a chain map $\widehat{H}=\{\widehat{H}_k\}$ inductively.
\end{proof}

\begin{Example}
    As an example, we have written down the chain map $\widehat{H}$ for $p=2,3$. 
    \begin{itemize}
        \item For $p=2$, we can define $\widehat{H}$ as follows.
        \begin{align*}
        \widehat{H}:&E\mathbb{Z}_2\otimes I\rightarrow I^{\otimes 2}\\
        &\widehat{H}_0(a\otimes s_j)=s_j\otimes s_j\\
        &\widehat{H}_{1}(a\otimes s_j) = 0 \\
        &\widehat{H}_{1}(1\otimes s) = s_1\otimes s + s\otimes s_0\\
        &\widehat{H}_{1}(\theta\otimes s) = \theta\cdot \widehat{H}_{1}(1\otimes s)=s\otimes s_1 + s_0\otimes s\\
        &\widehat{H}_{2}(a\otimes s_j) = 0 \\
        &\widehat{H}_{2}(1\otimes s) = s\otimes s\\
        &\widehat{H}_{2}(\theta\otimes s) = \theta\cdot \widehat{H}_2(1\otimes s)=-s\otimes s 
        \end{align*}
        for $j=0,1$ and $a=1,\theta$. Moreover, we define $\widehat{H}_k=0$ for $k>2$.
    \item For $p=3$, we can define $\widehat{H}$ as follows.
        \begin{align*}
        \widehat{H}:&E\mathbb{Z}_3\otimes I\rightarrow I^{\otimes 3}\\
            &\widehat{H}_0(a\otimes s_j) =s_j\otimes s_j \otimes s_j \\
            &\widehat{H}_{1}(a\otimes s_j) = 0 \\
            &\widehat{H}_{1}(1\otimes s) = s_1\otimes s_1\otimes s + s_1\otimes s\otimes s_0 + s\otimes s_0\otimes s_0\\
            &\widehat{H}_{1}(\theta\otimes s) = \theta\cdot H(1\otimes s) = s_0\otimes s_1\otimes s + s_0\otimes s\otimes s_0 + s\otimes s_1\otimes s_1\\
            &\widehat{H}_{1}(\theta^2\otimes s) = \theta^2\cdot H(1\otimes s) = s_0\otimes s_0\otimes s + s_1\otimes s\otimes s_1 + s\otimes s_0\otimes s_1\\
            &\widehat{H}_{2}(a\otimes s_j) = 0 \\
            &\widehat{H}_{2}(1\otimes s) =  s\otimes s_1 \otimes s + s\otimes s \otimes s_0\\
            &\widehat{H}_{2}(\theta\otimes s) = \theta\cdot \widehat{H}_{2}(1\otimes s) = s_0\otimes s \otimes s - s\otimes s \otimes s_1\\
            &\widehat{H}_{2}(\theta^2\otimes s) = \theta^2\cdot \widehat{H}_{2}(1\otimes s) = - s_1\otimes s \otimes s - s\otimes s_0 \otimes s\\
            &\widehat{H}_{3}(a\otimes s_j) = 0\\
            &\widehat{H}_{3}(1\otimes s) = \widehat{H}_{3}(\theta\otimes s)=  \widehat{H}_{3}(\theta^2\otimes s)=- s\otimes s \otimes s 
        \end{align*}
        for $j=0,1$ and $a=1,\theta,\theta^2$. Moreover, we define $\widehat{H}_k=0$ for $k>3$.
    \end{itemize}
\end{Example}

\begin{Proposition}\label{Pro:EquMapK}
    Assume $C,D, f_0$, and $f_1$ are as above. We can define a chain map as follows. 
\begin{equation}\label{Equ:EquChaMapK}
\begin{tikzcd}
     \widehat{K}:E\mathbb{Z}_p\otimes I\otimes C^{\otimes p}\arrow[r,"\widehat{H}\otimes id"] & (I\otimes C)^{\otimes p}\arrow[r,"K"]& D^{\otimes p}
\end{tikzcd}
\end{equation}
where $K$ is defined in \ref{Equ:ChaMapK}. Moreover, $K$ is an morphism of $\mathbb{Z}[\mathbb{Z}_p]$-modules.
\end{Proposition}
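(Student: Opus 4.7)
The plan is to decompose $\widehat{K}$ as a sequence of chain maps we already control, and then trace $\mathbb{Z}_p$-equivariance through each factor. Writing the composition explicitly, we factor
\[
\widehat{K}\colon E\mathbb{Z}_p\otimes I\otimes C^{\otimes p}\xrightarrow{\widehat{H}\otimes\mathrm{id}} I^{\otimes p}\otimes C^{\otimes p}\xrightarrow{\sigma}(I\otimes C)^{\otimes p}\xrightarrow{K} D^{\otimes p},
\]
where $\sigma$ is the shuffle isomorphism of graded tensor products that distributes the $p$ factors of $I^{\otimes p}$ into the corresponding slots of $C^{\otimes p}$ with the appropriate Koszul signs.

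For the chain-map assertion, $\widehat{H}$ is a chain map by Lemma \ref{Lem:ChaMapH}, $\sigma$ is a chain isomorphism since it is a standard interchange of graded tensor factors, and by Lemma \ref{Lem:ChaMapH1} the map $H\colon I\otimes C\to D$ is itself a chain map, encoding precisely the pair $f_0,f_1$ together with the chain homotopy $h$. Since tensoring chain maps yields chain maps, $K=H^{\otimes p}$ is a chain map, and hence so is the composite $\widehat{K}$.

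For the $\mathbb{Z}[\mathbb{Z}_p]$-equivariance, equip $E\mathbb{Z}_p\otimes I\otimes C^{\otimes p}$ with the action that is the canonical $\mathbb{Z}_p$-action on $E\mathbb{Z}_p$, trivial on $I$, and the signed cyclic permutation on $C^{\otimes p}$; equip $I^{\otimes p}\otimes C^{\otimes p}$, $(I\otimes C)^{\otimes p}$, and $D^{\otimes p}$ with their cyclic permutation actions. By construction of $\widehat{H}$, the defining identity $\widehat{H}_k(\theta^l\otimes w)=\theta^l\cdot\widehat{H}_k(1\otimes w)$ together with the inductive extension from Lemma \ref{Lem:ChaMapH} makes $\widehat{H}\otimes\mathrm{id}$ equivariant. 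The shuffle $\sigma$ intertwines the cyclic permutation actions on source and target, as a straightforward Koszul-sign check shows. Finally, $K$ is equivariant because cyclically permuting the pairs $(t_i,x_i)$ before applying $K$ cyclically permutes the outputs $H(t_i\otimes x_i)$ in $D^{\otimes p}$ with matching Koszul signs. Composing three equivariant chain maps yields an equivariant chain map $\widehat{K}$.

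The only real subtlety is the sign bookkeeping underlying $\sigma$ and the cyclic permutation actions. Verifying that the Koszul signs from shuffling the $I$-factors through the $C$-factors are compatible with those defining the cyclic $\mathbb{Z}_p$-action on each of the intermediate tensor products is routine but is the place where an inconsistency would propagate; it should be written out once carefully at the level of homogeneous elements and then invoked throughout.
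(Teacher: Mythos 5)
Your proposal is correct and spells out the argument that the paper compresses into the single sentence ``This statement follows from the constructions above.'' In particular, you rightly make explicit the shuffle isomorphism $\sigma\colon I^{\otimes p}\otimes C^{\otimes p}\to (I\otimes C)^{\otimes p}$, which the paper leaves implicit when it remarks that $K$ ``can be rewritten'' as a map out of $I^{\otimes p}\otimes C^{\otimes p}$; both the chain-map claim and the $\mathbb{Z}[\mathbb{Z}_p]$-equivariance check then reduce, as you say, to the equivariance of $\widehat H$ (built in by construction), the equivariance of the signed permutation $\sigma$, and the equivariance of $K=H^{\otimes p}$, the last of which uses that $H$ preserves degree so the Koszul signs on the two sides of the intertwining relation agree.
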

\begin{proof}
    This statement follows from the constructions above.
\end{proof}

One last algebraic fact we need is the following lemma.

\begin{Lemma}\label{Lem:SwitchTensor}
    As chain complex of Abelian groups, we have,
    \begin{equation}\label{Equ:SwitchTensor}
        C_{\text{Kh}}(R)\underset{\mathcal{R}^n_{\theta}}{\otimes} (E\mathbb{Z}_p\otimes C^{\otimes p}) \cong E\mathbb{Z}_p\underset{\mathbb{Z}[\mathbb{Z}_p]}{\otimes}\big(C_{\text{Kh}}(R)\underset{(\mathcal{H}^n)^{\otimes p}}{\otimes}C^{\otimes p}\big),
    \end{equation}
    where $C=C_{\text{Kh}}(T)$ for a $(;n)$-tangle $T$, and $R$ is a $(n\ldots,n;0)$-admissible tangle.
\end{Lemma}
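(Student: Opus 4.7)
The plan is to give an explicit chain isomorphism $\Phi$ together with its inverse $\Psi$, constructed by permuting the order of tensor factors. The entire content of the argument is the skew-group relation $g\cdot r=\theta_g(r)\cdot g$ in $\mathcal{R}^n_\theta$, which converts the twisting built into the semi-diagonal action on the left-hand side of (\ref{Equ:SwitchTensor}) into a natural diagonal $\mathbb{Z}_p$-action on the right-hand side.

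First I would fix the module structures. Since $R$ is an admissible $p$-periodic $(n,\ldots,n;0)$-tangle, the $p$ commuting $\mathcal{H}^n$-actions on the inner boundaries combine with the $\mathbb{Z}[\mathbb{Z}_p]$-action coming from the cyclic symmetry of $R$ to make $C_{\text{Kh}}(R)$ a module over $\mathcal{R}^n_\theta$; the compatibility of these two actions is exactly the defining relation of the skew group ring. By the semi-diagonal formula (\ref{Equ:SemiDiagAct}), $E\mathbb{Z}_p\otimes C^{\otimes p}$ is a left $\mathcal{R}^n_\theta$-module. On the right-hand side of (\ref{Equ:SwitchTensor}), I equip $C_{\text{Kh}}(R)\otimes_{(\mathcal{H}^n)^{\otimes p}}C^{\otimes p}$ with the diagonal $\mathbb{Z}_p$-action inherited from the two factors; well-definedness of this action across the $(\mathcal{H}^n)^{\otimes p}$-balancing is again the skew-group relation.

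The maps are
\begin{equation*}
\Phi(m\otimes(e\otimes w))=e\otimes(m\otimes w),\qquad\Psi(e\otimes(m\otimes w))=m\otimes(e\otimes w),
\end{equation*}
defined on elementary tensors and extended linearly. The essential verification is that $\Phi$ is balanced over $\mathcal{R}^n_\theta$, which it suffices to check on the two types of generators. For $r\in(\mathcal{H}^n)^{\otimes p}$ both sides collapse to $e\otimes(mr\otimes w)$. For $g\in\mathbb{Z}_p$, the LHS produces $e\otimes(mg\otimes w)$, while the RHS, after applying the semi-diagonal formula, produces $ge\otimes(m\otimes g(w))$; these agree in the target once $g$ is moved across the $\mathbb{Z}[\mathbb{Z}_p]$-balancing via the diagonal $\mathbb{Z}_p$-action on $C_{\text{Kh}}(R)\otimes_{(\mathcal{H}^n)^{\otimes p}}C^{\otimes p}$. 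A symmetric check shows that $\Psi$ is balanced over both $\mathbb{Z}[\mathbb{Z}_p]$ and $(\mathcal{H}^n)^{\otimes p}$, and $\Phi\circ\Psi$ and $\Psi\circ\Phi$ are clearly the identity on simple tensors.

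The main technical obstacle is the bookkeeping of left/right module conventions and the Koszul signs coming from the cyclic $\mathbb{Z}_p$-action on $C^{\otimes p}$. An equivalent, more categorical packaging of the proof is to observe that $\mathbb{Z}[\mathbb{Z}_p]\otimes C^{\otimes p}$ with the semi-diagonal action is naturally isomorphic to the induced module $\mathcal{R}^n_\theta\otimes_{(\mathcal{H}^n)^{\otimes p}}C^{\otimes p}$; applying this identification degree-by-degree (each $E\mathbb{Z}_p^k$ being free of rank one over $\mathbb{Z}[\mathbb{Z}_p]$) and using associativity of the tensor product reduces both sides of (\ref{Equ:SwitchTensor}) to $C_{\text{Kh}}(R)\otimes_{(\mathcal{H}^n)^{\otimes p}}C^{\otimes p}$ in each degree, with matching differentials.
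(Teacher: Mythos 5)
Your proposal is correct and follows essentially the same route as the paper's own proof: an explicit tensor-factor swap $n\otimes v\otimes m \mapsto v\otimes n\otimes m$, checked against the two kinds of balancing generators $(\mathcal{H}^n)^{\otimes p}$ and $\mathbb{Z}[\mathbb{Z}_p]$, with the skew relation and the semi-diagonal action doing the work. The one detail you gloss over that the paper spells out is that the right $\mathcal{R}^n_\theta$-action on $C_{\text{Kh}}(R)$ is defined by $n\cdot(r\theta^i)=\theta^{-i}\cdot(nr)$; your final ``categorical packaging'' paragraph (identifying $\mathbb{Z}[\mathbb{Z}_p]\otimes C^{\otimes p}$ with the induced module $\mathcal{R}^n_\theta\otimes_{(\mathcal{H}^n)^{\otimes p}}C^{\otimes p}$) is a clean restatement but is not in the paper.
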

\begin{proof}
    We can define a bijection by $n\otimes v\otimes m\mapsto v\otimes n\otimes m$. We have to check that this bijection respects the multi-linear relations on the left and right hand side of equation \ref{Equ:SwitchTensor}. Let us fix $r\in (\mathcal{H}^n)^{\otimes p}$, $m\in C^{\otimes p}$, $n\in C_{\text{Kh}}(R)$, and a generator $\theta^i\in\mathbb{Z}[\mathbb{Z}_p]$ for $i\in \{0,\ldots,p-1\}$. On the right hand side of (\ref{Equ:SwitchTensor}) we have relations,
    \begin{equation}\label{Equ:RightHandSide1}
    	v\otimes n\otimes r m = v\otimes n r\otimes m,
    \end{equation}
	and
    \begin{align}
    v\cdot\theta^i\otimes n\otimes m &= v\otimes \theta^i\cdot(n\otimes m) \label{Equ:RightHandSide}\\
    &= v\otimes \theta^i\cdot n \otimes \theta^i\cdot m.\nonumber
    \end{align}
	On the left hand side of (\ref{Equ:SwitchTensor}), we have,
    \begin{align}\label{Equ:LeftHandSide}
     n\cdot (r\theta^{i}) \otimes v\otimes m &= n\otimes (r\theta^i)\cdot  (v\otimes m) = n\otimes \theta^i\cdot  v\otimes r\theta^i\cdot m,
    \end{align}
    where the second equality in the above, is by the semi-diagonal $\mathcal{R}^n_{\theta}$-module structure on {$E\mathbb{Z}_p\otimes C^{\otimes p}$}. Moreover,  $C_{\text{Kh}}(R)$ is a right $\mathcal{R}^n_{\theta}$-module and $C^{\otimes p}$ is a left $\mathcal{R}^n_{\theta}$-module but a left $\mathbb{Z}[\mathbb{Z}_p]$-module and the $\mathcal{R}^n_{\theta}$ action is defined by
    \begin{equation}
    	n\cdot (r\theta^{i})=\theta^{-i}\cdot (n r)
    \end{equation}
    
     Hence, for the left term in the equation (\ref{Equ:LeftHandSide}), we have
    \begin{align*}
       \theta^{-i}\cdot (n r) \otimes v\otimes m &\mapsto  v\otimes \theta^{-i}\cdot (n r) \otimes m\\
       &\quad\: =  v\otimes \theta^{-i}\cdot (n r) \otimes \theta^{-i}\cdot(\theta^{i}\cdot m)\\
       &\quad\: =  v\otimes \theta^{-i}\cdot (n r \otimes \theta^{i}\cdot m)\\
       &\quad\: =  v\cdot \theta^{-i} \otimes n r \otimes \theta^{i}\cdot m\\
        &\quad\: =  \theta^{i}\cdot v\otimes n \otimes r\theta^{i}\cdot m\\
    \end{align*}
    On the other hand, for the term on the right in the equation (\ref{Equ:LeftHandSide}), we have
    \begin{equation}
        n\otimes \theta^i\cdot  v\otimes r\theta^i\cdot m \mapsto \theta^i\cdot  v\otimes n\otimes r\theta^i\cdot m\nonumber
    \end{equation}
    Therefore, the result follows.
\end{proof} 

\subsubsection{Proof of theorem \ref{Thm:EquiFunct}}\label{SSec:ProThmEquFunct}
In this section, we prove the theorem \ref{Thm:EquiFunct}, which is restated below. The functoriality of Khovanov homology is stated in section \ref{SSec:KhovFunct}. In this section and following sections by $\simeq$, we mean chain homotopy up to a $\pm$ sign.

\begin{proof}[Proof of theorem \ref{Thm:EquiFunct}]
From theorem \ref{Thm:EquiMM}, given two equivariant cobordisms $\mathcal{E},\mathcal{E}':L_0\rightarrow L_1$ that are equivariantly isotopic relative to the boundary, they are equivalent up to finitely many equivariant movie moves (definition \ref{Def:EquiMM}). Hence, we have to prove that the maps induced by two equivariant movies that are related by an equivariant movie move, are chain homotopic up to a $\pm$ sign. Without loss of generality, we can assume that $\mathcal{E}$ and $\mathcal{E}'$ differ only by one equivariant movie move. Denote by $\boldsymbol{EM}_{\mathcal{E}}=\{E_i\}_{i=0}^k$ and $\boldsymbol{EM}_{\mathcal{E}'}=\{E'_i\}_{i=0}^k$ the equivariant movie of $\mathcal{E}$ and $\mathcal{E}'$ respectively. Hence, $E_0=E'_0$, and we have a diskular decomposition $E_0=R\circ(T_0,...,T_{p-1})$ for admissible diskular tangles $R\in\mathbb{T}^{(n,\cdots,n;0)}$ and $T_1,...,T_p\in\mathbb{T}^{(;n)}$. Also, $E_k=E'_k$ and we have $E_k=R\circ(S_0,...,S_{p-1})$ for admissible diskular tangles $R\in\mathbb{T}^{(n,\cdots,n;0)}$ and $S_1,...,S_p\in\mathbb{T}^{(;n)}$. Moreover, $(T_0,...,T_{p-1})$ and $(S_0,...,S_{p-1})$ are $p$-equivariant $(;n)$-diskular tangles ($\theta(S_i)=S_{i+1}$ and $\theta(T_j)=T_{j+1}$ for $i,j\in \mathbb{Z}_p=\{0,...,p-1\}$). 

Let us denote $C_i=C_{\text{Kh}}(T_i)$ and $D_i=C_{\text{Kh}}(S_i)$. Both $(T_0,...,T_{p-1})$ and $(S_0,...,S_{p-1})$ being $p$-equivariant $(;n)$-diskular tangles implies that $C_0\cong ... \cong C_{p_1}$ and  $D_0\cong ... \cong D_{p_1}$. Therefore, we denote $C=C_0=...=C_{p-1}$ (respectively $D=D_0=...=D_{p-1}$) and we can write the equivariant Khovanov chain complexes for $E_0$ and $E_k$ by

\begin{equation}\label{Equ:TangDecoFrame0}
    C_{\text{EKh}}(E_0)=E\mathbb{Z}_p^*\underset{\mathbb{Z}[\mathbb{Z}_p]}{\otimes}\big(C_{\text{Kh}}(R\circ(T_0,...,T_0)\big)=E\mathbb{Z}_p^*\underset{\mathbb{Z}[\mathbb{Z}_p]}{\otimes}\big( C_{\text{Kh}}(R)\underset{\mathcal{H}^n\otimes \cdots \otimes \mathcal{H}^n}{\otimes}C^{\otimes p}\big)
\end{equation}
and 
\begin{equation}\label{Equ:TangDecoFrame1}
    C_{\text{EKh}}(E_k)=E\mathbb{Z}_p^*\underset{\mathbb{Z}[\mathbb{Z}_p]}{\otimes}\big( C_{\text{Kh}}(R\circ(S_0,...,S_{p-1})\big)= E\mathbb{Z}_p^*\underset{\mathbb{Z}[\mathbb{Z}_p]}{\otimes}\big(C_{\text{Kh}}(R)\underset{\mathcal{H}^n\otimes \cdots \otimes \mathcal{H}^n}{\otimes}D^{\otimes p}\big)
\end{equation}

Given that $E\mathbb{Z}_p^*$ is isomorphic to $E\mathbb{Z}_p$ as $\mathbb{Z}[\mathbb{Z}_p]$-module, we will drop the dual sign $(\cdot^*)$ from the notation. The chain maps induced by the $\mathcal{E}$ and $\mathcal{E}'$ respectively are given by 
\begin{equation}
C_{\text{EKh}}(\mathcal{E})=id_{C_{\text{Kh}}(R)}\otimes C_{\text{Kh}}(\mathfrak{f},...,\mathfrak{f})\;\text{ and }\;C_{\mathrm{EKh}}(\mathcal{E}')=id_{C_{\text{Kh}}(R)}\otimes C_{\text{Kh}} (\mathfrak{f}',...,\mathfrak{f}'), 
\end{equation}
where $\mathfrak{f},\mathfrak{f}':T_i\rightarrow S_i$ for $i\in\mathbb{Z}_p$ denote the composition of elementary cobordisms localized to diskular tangles $S_i$ and $T_i$ in the frames of the movie of $\mathcal{E}$ and $\mathcal{E}'$ respectively. Also let us donate $f_0=C_{\text{Kh}} (\mathfrak{f})$ and $f_1=C_{\mathrm{Kh}} (\mathfrak{f}')$. Our goal is to show the induced maps $C_{\text{EKh}}(\mathcal{E})$ and $C_{\text{EKh}}(\mathcal{E}')$ are equivariantly chain homotopic, possibly up to a sign.

The chain complexes $C^{\otimes p}$ and $D^{\otimes p}$ are $\mathcal{R}^n_{\theta}$-modules (see section \ref{SSec:TwiGroRin}). The action of $\theta$ commutes with $f_0^{\otimes p}$ and $f_1^{\otimes p}$. Hence, $f_0^{\otimes p}$ and $f_1^{\otimes p}$ are also $\mathcal{R}^n_{\theta}$-module morphisms. Therefore, we need to construct a $\mathcal{R}^n_{\theta}$-chain homotopy between $f_0^{\otimes p}$ and $f_1^{\otimes p}$.

We have the following commutative diagram in $\textbf{Ch}_{\mathcal{R}^{n}_{\theta}}$, the category of equivariant chain complexes:
    \begin{equation}
        \begin{tikzcd}
            \text{Hom}_{\mathcal{K}}(E\mathbb{Z}_p\otimes C^{\otimes p}, D^{\otimes p}) \arrow[d] &\text{Hom}_{\mathcal{K}}(E\mathbb{Z}_p\otimes C^{\otimes p}, E\mathbb{Z}_p\otimes D^{\otimes p})\arrow[d]\arrow[l]\\
            \text{Hom}_{\mathcal{D}}(E\mathbb{Z}_p\otimes C^{\otimes p}, D^{\otimes p}) & \text{Hom}_{\mathcal{D}}(E\mathbb{Z}_p\otimes C^{\otimes p}, E\mathbb{Z}_p\otimes D^{\otimes p})\arrow[l]
        \end{tikzcd}
    \end{equation}
Here, $\mathcal{K}$ (respectively $\mathcal{D}$) denotes the homotopy category (respectively derived category) of $R^n_{\theta}$-modules. 

Since $E\mathbb{Z}_p$ is a free resolution of $\mathbb{Z}$ over $\mathbb{Z}[\mathbb{Z}_p]$-modules, there is a chain map $E\mathbb{Z}_p\to \mathbb{Z}$. If we give $E\mathbb{Z}_p\otimes D^{\otimes p}$ and $\mathbb{Z}\otimes D^{\otimes p}$ the semi-diagonal action by $\mathcal{R}^n_\theta$ (from section \ref{SSec:TwiGroRin}), there is an induced homomorphism of $\mathcal{R}^n_{\theta}$-modules $E\mathbb{Z}_p\otimes D^{\otimes p}\to \mathbb{Z}\otimes D^{\otimes p}\cong D^{\otimes p}$. The top and bottom horizontal arrows are induced by post-composing with this map.


By theorem \ref{Thm:KhoTwiProj}, $E\mathbb{Z}_p\otimes C^{\otimes p}$ is a projective, bounded above $\mathcal{R}^n_{\theta}$-chain complex. Hence, lemma \ref{Lem:HomAndDer} implies that both of the vertical maps are isomorphisms. Moreover, $E\mathbb{Z}_p$ is a projective resolution of $\mathbb{Z}$ as a $\mathbb{Z}[\mathbb{Z}_p]$-module. Therefore, $E\mathbb{Z}_p\otimes D^{\otimes p}$ is quasi-isomorphic to $D^{\otimes p}$ as a $\mathcal{R}^n_{\theta}$-chain complex, so the bottom horizontal map is also an isomorphism. Therefore, the top horizontal map is also an isomorphism and the map $\widehat{K}$ defined in proposition \ref{Pro:EquMapK}, corresponds to an $\mathcal{R}^n_{\theta}$-chain homotopy $\tilde{k}$ between chain maps $f_0^{\otimes p}, f_1^{\otimes p} :E\mathbb{Z}_p\otimes C^{\otimes p}\rightarrow E\mathbb{Z}_p\otimes D^{\otimes p}$. 

Now taking the tensor product with $C_{\text{Kh}}(R)$ as a right $\mathcal{R}^n_{\theta}$-module, and extending the chain maps $f_0^{\otimes p}$ and $f_1^{\otimes p}$ by the identity on $C_{\text{Kh}}(R)$, we have
\begin{align}\label{Equ:WrongOrder}
    \tilde{f}_i=id_{C_{\text{Kh}}(R)}\underset{\mathcal{R}^n_{\theta}}{\otimes}f_i^{\otimes p} &:C_{\text{Kh}}(R)\underset{\mathcal{R}^n_{\theta}}{\otimes} (E\mathbb{Z}_p\otimes C^{\otimes p}) \rightarrow C_{\text{Kh}}(R)\underset{\mathcal{R}^n_{\theta}}{\otimes} (E\mathbb{Z}_p\otimes D^{\otimes p}),
\end{align}
for $i=0,1$. To complete the proof we need to re-write the modules in (\ref{Equ:WrongOrder}) similar to the equations (\ref{Equ:TangDecoFrame0}) and (\ref{Equ:TangDecoFrame1}).

Now by the lemma \ref{Lem:SwitchTensor}, we can write the chain complexes as follows.
\begin{align}
    \tilde{f}_i &:E\mathbb{Z}_p\otimes (V\otimes_{(\mathcal{H}^n)^{\otimes p}}C^{\otimes p}) \rightarrow E\mathbb{Z}_p\otimes(V\otimes_{(\mathcal{H}^n)^{\otimes p}} D^{\otimes p}).
\end{align}

The maps $f_0$ and $f_1$ that are induced by the before and the after movie of a movie move are chain homotopic up to a sign \cite{Kho06, Bar05, Jac04}. Hence $f_0^{\otimes p}$ and $f_1^{\otimes p}$ are equivariant chain homotopic up to a factor of $(-1)^p$. In conclusion, 
\begin{equation}
    C_{\text{EKh}}(\Sigma)=id_{C_{\text{Kh}}(R)}\otimes C_{\text{Kh}}(\mathfrak{f},...,\mathfrak{f}) =id_{C_{\text{Kh}}(R)}\otimes f_0^{\otimes p}
\end{equation}
and 
\begin{equation}
    C_{\text{EKh}}(\Sigma')=id_{C_{\text{EKh}}(R)}\otimes C_{\text{Kh}} (\mathfrak{f}',...,\mathfrak{f}')=id_{C_{\text{Kh}}(R)}\otimes f_1^{\otimes p}
\end{equation}
are equivariantly chain homotopic up to $(-1)^p$.
\end{proof}

\section{Obstruction to equivariant concordance}\label{Sec:EquConcObs}
This section is devoted to proving theorem \ref{Thm:EquiRibbCon}. Some of the notations are similar to the section \ref{SSec:NecCutRibbon}.

\subsection{Equivariant neck cutting relation}\label{SSec:EquiNecCut}

Let $\text{K}ob^2_{p,\bullet}\subset\mathcal{K}ob^2_{\bullet}$ denote the $\mathbb{Z}$-linearized category with objects the collection of $p$-periodic links. The morphisms of $\text{K}ob^2_{p,\bullet}$ consist of finite formal sum of cobordisms (surfaces) in $\mathbb{R}^3\times[0,1]$ with a choice of finitely many dots on the cobordisms away from their boundaries. These cobordisms are not a priori equivariant under the extended action of $\tilde{\theta}$ but $\mathbb{Z}_p$ acts on formal sums of cobordism by $\tilde{\theta}$ extended linearly. We denote by $(\text{K}ob^2_{p,\bullet})^{\mathbb{Z}_p}$ the fixed set of the action $\tilde{\theta}$.

Additionally, let $\mathbb{Z}_p\mathcal{K}ob^2$ denote the $\mathbb{Z}$-linear category with objects the $p$-periodic links in $\mathbb{R}^3\backslash\boldsymbol{z}$. Morphisms are the formal linear combination of $p$-equivariant cobordisms in $(\mathbb{R}^3\backslash\boldsymbol{z})\times[0,1]$ (definition \ref{Def:EquiCob}). 

\begin{Definition} 
Assume $\Sigma$ is a $p$-equivariant cobordism, and $h_i$ are smooth, embedded 1-handles $[-1,1]\times\mathbb{D}^2\rightarrow \mathbb{R}^3\times [0,1]$. By a \emph{standard} $p$-\textit{equivariant neck} $\tilde{\mathfrak{n}}=(\mathfrak{n}_1,...,\mathfrak{n}_p)$ on $\Sigma$, we are referring to a collection of $p$ disjoint standard necks $\mathfrak{n_i}=h_i\cap \Sigma$ (see definition \ref{Def:Neck}), such that
\begin{enumerate}
    \item[(EN1)] The $h_i$ are permuted by the action $\tilde{\theta}$
    \item[(EN2)] The $h_i$ are disjoint from the $\boldsymbol{z}\times[0,1]$. 
\end{enumerate}
\end{Definition}

We can define \textit{equivariant neck cutting} as follows. First, for an equivariant cobordism $\Sigma\in \mathbb{Z}_p\mathcal{K}ob^2$, we remove the necks $\mathfrak{n}_i=[-1,1]\times S_i^1\hookrightarrow\Sigma$ for $i=1,...,p$. Then for $i=1,...,p$, we glue disks $\{-1,1\}\times \mathbb{D}_i^2$ to the cutout boundaries obtained from cutting $\mathfrak{n}_i$. Lastly, for each $1\leq i\leq p$, we either place a dot on $\{1\}\times \mathbb{D}_i^2$ or on $\{-1\}\times \mathbb{D}_i^2$ and denote the resulting dotted cobordism by $\mathcal{E}_{(\iota_1,...,\iota_{i-1},\boldsymbol{+},\iota_{i+1},...,\iota_p)}$ if the dot is on  $\{1\}\times \mathbb{D}_i^2$ or respectively by $\mathcal{E}_{(\iota_1,...,\iota_{i-1},\boldsymbol{-},\iota_{i+1},...,\iota_p)}$ if the dot is placed on $\{-1\}\times \mathbb{D}_i^2$. Here $\iota_1,...,\iota_p\in \{+,-\}$. To wit, the result of equivariant neck cutting on an standard equivariant neck $\tilde{\mathfrak{n}}$ on equivariant cobordism $\Sigma$ can be understood as a map that sends $\Sigma \in \mathbb{Z}_p\mathcal{K}ob^2$ to the formal sum
\begin{equation}\label{Equ:EquNecCut2}
    \mathcal{E}=\sum_{(\iota_1,...,\iota_p)\in \{\boldsymbol{+},\boldsymbol{-}\}^p} \mathcal{E}_{(\iota_1,...,\iota_p)}\in (\text{K}ob^2_{p,\bullet})^{\mathbb{Z}_p}.
\end{equation}

\begin{figure}
    \centering
	\def\svgwidth{1\textwidth}
\begingroup%
  \makeatletter%
  \providecommand\color[2][]{%
    \errmessage{(Inkscape) Color is used for the text in Inkscape, but the package 'color.sty' is not loaded}%
    \renewcommand\color[2][]{}%
  }%
  \providecommand\transparent[1]{%
    \errmessage{(Inkscape) Transparency is used (non-zero) for the text in Inkscape, but the package 'transparent.sty' is not loaded}%
    \renewcommand\transparent[1]{}%
  }%
  \providecommand\rotatebox[2]{#2}%
  \newcommand*\fsize{\dimexpr\f@size pt\relax}%
  \newcommand*\lineheight[1]{\fontsize{\fsize}{#1\fsize}\selectfont}%
  \ifx\svgwidth\undefined%
    \setlength{\unitlength}{566.55058132bp}%
    \ifx\svgscale\undefined%
      \relax%
    \else%
      \setlength{\unitlength}{\unitlength * \real{\svgscale}}%
    \fi%
  \else%
    \setlength{\unitlength}{\svgwidth}%
  \fi%
  \global\let\svgwidth\undefined%
  \global\let\svgscale\undefined%
  \makeatother%
  \begin{picture}(1,0.13512807)%
    \lineheight{1}%
    \setlength\tabcolsep{0pt}%
    \put(0,0){\includegraphics[width=\unitlength,page=1]{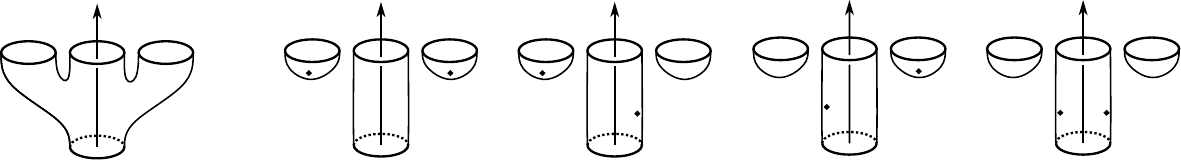}}%
    \put(0.42490274,0.03499618){\color[rgb]{0,0,0}\makebox(0,0)[t]{\lineheight{1.25}\smash{\begin{tabular}[t]{c}$+$\end{tabular}}}}%
    \put(0.62347299,0.03499618){\color[rgb]{0,0,0}\makebox(0,0)[t]{\lineheight{1.25}\smash{\begin{tabular}[t]{c}$+$\end{tabular}}}}%
    \put(0.82204346,0.03499618){\color[rgb]{0,0,0}\makebox(0,0)[t]{\lineheight{1.25}\smash{\begin{tabular}[t]{c}$+$\end{tabular}}}}%
    \put(0.19985667,0.03499618){\color[rgb]{0,0,0}\makebox(0,0)[t]{\lineheight{1.25}\smash{\begin{tabular}[t]{c}$\mapsto$\end{tabular}}}}%
  \end{picture}%
\endgroup%

    \caption[Equivariant neck cutting]{An example of the 2-Equivariant neck cutting. The dotted cobordism depicted on the right hand side is $\mathcal{E}_{(\boldsymbol{+},\boldsymbol{+})}+\mathcal{E}_{(\boldsymbol{+},\boldsymbol{-})}+\mathcal{E}_{(\boldsymbol{-},\boldsymbol{+})}+\mathcal{E}_{(\boldsymbol{-},\boldsymbol{-})}$.}
    \label{Fig:EquiNecCutCob}
\end{figure}

For instance, in the notation above, the $\mathcal{E}_{(\boldsymbol{+},\cdots,\boldsymbol{+})}$ indicates that after all of the necks $\mathfrak{n}_i$ were removed and we placed a dot on $\{+1\}\times \mathbb{D}^2$ for each $1\leq i\leq p$. More generally, after equivariant neck cuts on a $p$-equivariant cobordism $\Sigma$, we have $2^p$ choices for placements of dots on the glued disks $(\{\pm1\}\times\mathbb{D}^2)_{i=1}^p$. Figure \ref{Fig:EquiNecCutCob} shows the equivariant neck cutting on a $2$-equivariant cobordism $\Sigma$.  

\begin{Proposition}\label{Pro:NecCutEquKhov}
    Assume $\Sigma$ is an equivariant cobordism and $\mathcal{E}$ denotes the equivariant neck cutting of $\Sigma$ (\ref{Equ:EquNecCut2}). Then, equivariant Khovanov homology satisfies the equivariant neck cutting relation i.e.,
    \begin{equation}\label{Equ:NecCutSum}
        C_{\text{EKh}}(\Sigma) \simeq \sum_{[\iota]\in \{+,-\}/\mathbb{Z}_p}C_\text{EKh}\big(\sum_{(\iota_1,...,\iota_p)\in [\iota]}\mathcal{E}_{(\iota_1,...,\iota_p)}\big)
    \end{equation}
    where $\simeq$ chain homotopy up to a sign as was mentioned earlier in this section, and  $[\iota]$ denotes the class of $\iota\in \{+,-\}^p$ modulo the action of $\mathbb{Z}_p$ by cyclic permutation on $\{+,-\}^p$.
\end{Proposition}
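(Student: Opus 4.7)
The plan is to reduce the equivariant neck cutting relation to the classical one (Proposition \ref{Pro:KhoNecCut}) by iterating it across the $p$ necks comprising $\tilde{\mathfrak{n}}$, and then regrouping the resulting expansion by $\mathbb{Z}_p$-orbits on the dot-placement choices. Since the necks $\mathfrak{n}_1,\ldots,\mathfrak{n}_p$ in $\tilde{\mathfrak{n}}$ are pairwise disjoint by construction, I would apply the classical neck cutting relation independently and in succession to each. As each application is a strict equality at the chain level up to a single overall $\pm$, after $p$ iterations one obtains in the ordinary Khovanov chain complex
\begin{equation*}
 C_{\text{Kh}}(\Sigma)\;\simeq\;\sum_{\iota\in\{+,-\}^p}C_{\text{Kh}}(\mathcal{E}_\iota),
\end{equation*}
where the $\simeq$ absorbs the cumulative $(\pm 1)^p$ sign accumulated over the $p$ individual cuts.

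Next, by condition (EN1) the extended action $\tilde\theta$ cyclically permutes the necks $\mathfrak{n}_i$, and so induces the cyclic $\mathbb{Z}_p$-action on $\{+,-\}^p$ by coordinate rotation, giving $\tilde\theta(\mathcal{E}_\iota)=\mathcal{E}_{\tilde\theta(\iota)}$. The total sum on the right is therefore $\tilde\theta$-invariant and decomposes over orbits:
\begin{equation*}
 \sum_{\iota\in\{+,-\}^p}\mathcal{E}_\iota\;=\;\sum_{[\iota]\in\{+,-\}^p/\mathbb{Z}_p}\mathcal{E}_{[\iota]},\qquad \mathcal{E}_{[\iota]}\;:=\;\sum_{\iota'\in[\iota]}\mathcal{E}_{\iota'}.
\end{equation*}
Each orbit-sum $\mathcal{E}_{[\iota]}$ is $\tilde\theta$-invariant by construction, hence a morphism in $(\text{K}ob^2_{p,\bullet})^{\mathbb{Z}_p}$, so the equivariant Khovanov functor is well-defined on it.

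Since the resulting identity is between $\mathbb{Z}_p$-equivariant chain maps on the underlying Khovanov complexes (up to sign), I would then apply $E\mathbb{Z}_p^*\otimes_{\mathbb{Z}[\mathbb{Z}_p]}(-)$ to both sides, producing the desired relation
\begin{equation*}
 C_{\text{EKh}}(\Sigma)\;\simeq\;\sum_{[\iota]\in\{+,-\}^p/\mathbb{Z}_p}C_{\text{EKh}}(\mathcal{E}_{[\iota]}).
\end{equation*}
The main obstacle is ensuring that the sign choices at each individual neck-cutting step compose into a single overall $\pm$ compatibly with the $\mathbb{Z}_p$-action. The $\tilde\theta$-invariance of $\Sigma$ already forces the sign pattern to be constant on orbits, and for prime $p$ the orbit structure on $\{+,-\}^p$ is especially simple (two singleton orbits $(+,\ldots,+)$ and $(-,\ldots,-)$, with all other orbits of full length $p$), which makes the bookkeeping tractable. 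Should any residual sign discrepancy remain, it can be absorbed using the categorical argument from the proof of Theorem \ref{Thm:EquiFunct}, exploiting the projectivity of $E\mathbb{Z}_p\otimes C^{\otimes p}$ over $\mathcal{R}^n_\theta$ (Theorem \ref{Thm:KhoTwiProj}) together with Lemma \ref{Lem:HomAndDer} to transfer the ordinary chain-level identity to an equivariant one; the discrepancy then lands inside the $(\pm 1)^p$ factor inherent in equivariant functoriality.
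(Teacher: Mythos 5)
Your proposal is correct and follows essentially the same line as the paper's proof: cut the $p$ disjoint necks via the ordinary neck cutting relation, expand to the sum over $\{+,-\}^p$, regroup by $\mathbb{Z}_p$-orbits so that each orbit-sum is a $\tilde\theta$-invariant morphism, and then pass to equivariant Khovanov. The one economy worth noting is that the paper exploits the $p$-fold diskular tangle factorization to write $C_{\text{Kh}}(\Sigma)=\bigotimes_{i=1}^p\bigl(C_{\text{Kh}}(\mathcal{E}_{i,+})+C_{\text{Kh}}(\mathcal{E}_{i,-})\bigr)$ as a strict chain-level equality (possible for standard necks, as remarked after Proposition \ref{Pro:KhoNecCut}), so your fallback appeal to the derived-category machinery of Theorem \ref{Thm:EquiFunct} is not needed.
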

\begin{proof}
    For simplicity, we can assume that $\Sigma$ consists of only one standard equivariant neck $\tilde{\mathfrak{n}}=(\mathfrak{n}_i)_{i=1}^p$ such that the equivariant movie of $\Sigma$ is given by considering $p$ copies of the movie in figure \ref{Fig:NeckCut} (a). The general case follows from the simple case inductively. Let $E_i=R\circ(T_{1,i},\cdots,T_{p,i})$ for $i=0,1,2$ denotes the equivariant movie of the $\Sigma$, where $R$ is a $(n,\cdots,n;0)$-diskular tangle and for $i=1,...,p$, diskular tangles $T_{0,i},T_{1,i}$ and $T_{2,i}$ are the $(;4)$-diskular tangle in figure \ref{Fig:NeckCut}(a) from left to right respectively. The elementary cobordism $\mathfrak{f}_i:T_{0,i}\rightarrow T_{2,i}$ (definition \ref{Def:SimpCob}) for $i=1,...,p$ consists of two elementary saddle moves.
    
    There are induced chain maps $C_{\text{Kh}}(id_{R})=id_{C_{\text{Kh}}(R)}$, and 
    $$\bigotimes_{i=1}^pC_{\text{Kh}}(\mathfrak{f}_i): C_{\text{Kh}}(T_{0,1})\otimes ...\otimes C_{\text{Kh}}(T_{0,p})\rightarrow C_{\text{Kh}}(T_{2,1})\otimes ...\otimes C_{\text{Kh}}(T_{2,p}),$$
    where $C_{\text{Kh}}(R)$ and $C_{\text{Kh}}(T_{j,i},...,T_{j,i})$ are $\mathcal{R}^n_{\theta}$-modules. Because Khovanov homology satisfies the neck cutting relation we can write 
    \begin{align}
        C_{\text{Kh}}(\Sigma) &= \big(C_{\text{Kh}}(\mathcal{E}_{1,+})+C_{\text{Kh}}(\mathcal{E}_{1,-})\big)\otimes\cdots\otimes \big(C_{\text{Kh}}(\mathcal{E}_{p,+})+C_{\text{Kh}}(\mathcal{E}_{p,-})\big)\\
        &=\sum_{(\iota_1,...,\iota_p)\in \{\boldsymbol{+},\boldsymbol{-}\}^p} \bigotimes_{j=1}^p C_{\text{Kh}}(\mathcal{E}_{\iota_j}) \nonumber\\
        &=\sum_{(\iota_1,...,\iota_p)\in \{\boldsymbol{+},\boldsymbol{-}\}^p} C_{\text{Kh}}(\mathcal{E}_{(\iota_1,...,\iota_p)})\nonumber
    \end{align}
    where $C_{\text{Kh}}(\mathcal{E}_{i,+})+C_{\text{Kh}}(\mathcal{E}_{i,-})$ denotes the result of neck cutting on the neck $\mathfrak{n}_i$ for $i=1,...,p$. Also, the notation $\mathcal{E}_{\iota_j}$ denotes $\Sigma$ after the all of the $p$ necks have replaced by dotted disks according to $\iota_j\in\{+,-\}^p$. 
    By the action of $\mathbb{Z}_p$ on $\{+,-\}^p$, for each $[\iota']\in\{+,-\}^p/\mathbb{Z}_p$, the map induced by $\sum_{\iota\in[\iota']}C_{\text{Kh}}(\mathcal{E}_{\iota})$ is equivariant. Therefore, we can re-write the sum above over the equivalence classes $\{+,-\}^p/\mathbb{Z}_p$ and we get the equation  (\ref{Equ:NecCutSum}), as desired.
\end{proof}

\subsection{Map induced by equivariant ribbon concordance}\label{SSec:EquRibConMap}
Now we can prove theorem \ref{Thm:EquiRibbCon}. Our proof mirrors that of \cite{LZ19}.

\begin{Definition}\label{Def:EquiConc}
    An \emph{equivariant ribbon concordance} from a $p$-periodic knot $K_0$ to a $p$-periodic knot $K_1$ is a smoothly embedded annulus $F\subset (\mathbb{R}^3\backslash\boldsymbol{z}) \times [0,1]$ such that the projection $\rho: (\mathbb{R}^3\backslash\boldsymbol{z}) \times [0,1]\rightarrow [0,1]$ restricted to $F$ is a Morse function. Moreover,
    \begin{enumerate}
        \item It is invariant under the extended action $\tilde{\theta}$ on $S^3\times [0,1]$.
        \item It has only index 0 and 1 critical points.
    \end{enumerate}
\end{Definition}
    
\begin{proof}[Proof of theorem \ref{Thm:EquiRibbCon}] 
By theorem \ref{Thm:EquiFunct}, we can equivariantly isotope $F:K_0\rightarrow K_1$ such that the movie of $F$ has the following order. The index $0$ critical points (births) appear first, followed by a sequence of equivariant planer isotopy and equivariant Reidemeister moves. Lastly, $F$ has index 1 critical points (saddles), where the saddles can be viewed as attaching unknotted equivariant bands between two strands of the link diagram. 

By $F^{\text{op}}$ we denote the ribbon concordance $F$ with the opposite orientation. Gluing $F$ along $K_1$ to $F^{\text{op}}$ on $K_1$ results in the cobordism $F^{\text{op}}\circ F$ from $K_0$ to itself. Both $F$ and $F^{\text{op}}$ are equivariant. Hence, $\Sigma= F^{\text{op}}\circ F$ is an equivariant cobordism. Then $\Sigma$ can be structured as a finite collection of equivariant unknotted 2-spheres $\{S^2_j\}$ (obtained from index 0 and 2 critical points of $\Sigma$) tubed to annulus $K_0\times [0,1]$ by a finite collection of standard equivariant necks (obtained from the index 1 critical points of $F$ and $F^{\text{op}}$)

For simplicity of notation, we assume that $F$ has only one equivariant index 0 critical point (which consists of $p$ distinct index 0 critical points that correspond by the action $\tilde{\theta}$), and only one equivariant index 1 critical point. The general case would follow by applying the same argument inductively to a general collection of equivariant necks and spheres.

We can perform equivariant neck cutting to eliminate the equivariant neck. Let $\mathcal{E}$ denote the resulting cobordism, which viewed as an element of $(\mathcal{K}ob_{\bullet}^2)^{\mathbb{Z}_p}$ is a finite formal sum of cobordisms consisting of finitely many equivariant embedded, unknotted 2-spheres with dots $(S^2,\cdots,S^2)$, and an embedded cylinder $C'$ with dots and not linked with the $(S^2,\cdots,S^2)$ above. 
\begin{equation}\label{Equ:EquNecCut}
    \mathcal{E}= \sum_{(\iota_1,...,\iota_p)\in\{\bullet,\circ\}^p}C'_{(\overline{\iota}_1,...,\overline{\iota}_p)}\cup (S^2_{\iota_1},...,S^2_{\iota_p})
\end{equation}
where $\overline{\iota}_j=\bullet$ if $\iota_j=\circ$ and, $\overline{\iota}_j=\circ$ if $\iota_j=\bullet$. Also, $S^2_{\iota_j}$ is a dotted sphere if $\iota_j=\bullet$ and is a sphere with no dots if $\iota_j=\circ$.

Not all of the terms in the equation \ref{Equ:EquNecCut} are individually equivariant dotted cobordisms. In the formal sum above, the cobordism $C'_{\bullet,\cdots,\bullet}\cup(S^2_{\circ},\cdots, S^2_{\circ})$ which has all of the dots on $C'$ is an equivariant dotted cobordism. Similarly, the cobordism $C'_{\circ,\cdots,\circ}\cup(S^2_{\bullet},\cdots, S^2_{\bullet})$ which has all of the dots on the spheres is also an equivariant dotted cobordism. The other terms are not taken to themselves by the action of $\tilde{\theta}$. Moreover, the extended action $\tilde{\theta}$ acts on $(S^2_{\iota_1},...,S^2_{\iota_n})$ by cyclic permutation of the factors. Let $[\iota]\in\{\bullet,\circ\}^p/\mathbb{Z}_p$ denote the orbits of the action of $\mathbb{Z}_p$ on $\{\bullet,\circ\}^p$ by cyclic permutation. Then, for each $[\iota]$,
\begin{align}\label{Equ:EquiSum}
\mathcal{E}_{[\iota]} &=\sum_{(\iota_1,...,\iota_p)\in [\iota]}C'_{(\overline{\iota}_1,...,\overline{\iota}_p)}\cup (S^2_{\iota_1},...,S^2_{\iota_p})
\end{align}
is fixed by $\tilde{\theta}$, so is an equivariant cobordism.

The dotted cobordism $C'_{\bullet,\cdots,\bullet}\cup(S^2_{\circ},\cdots, S^2_{\circ})$ (respecively $C'_{\circ,\cdots,\circ}\cup(S^2_{\bullet},\cdots, S^2_{\bullet})$) is equivariantly isotopic to $C_{\bullet,\cdots,\bullet}\cup(S^2_{\circ},\cdots, S^2_{\circ})$ (respectively $C_{\circ,\cdots,\circ}\cup(S^2_{\bullet},\cdots, S^2_{\bullet})$), where $C_{{\bullet,\cdots,\bullet}}$ (respectively $C_{\circ,\cdots,\circ}$) denotes the standard cylinder $K_0\times [0,1]$ with $p$ dots (respectively with no dots) which is split from the spheres $(S^2_{\circ},\cdots, S^2_{\circ})$ (respectively $(S^2_{\bullet},\cdots, S^2_{\bullet})$). We have
\begin{align}\label{Equ:LastThmE1}
    C_{\text{EKh}}(C'_{\bullet,\cdots,\bullet}\cup(S^2_{\circ},\cdots, S^2_{\circ})) &\simeq
    C_{\text{EKh}}(C_{\bullet,\cdots,\bullet}\cup(S^2_{\circ},\cdots, S^2_{\circ}))\\
    &= id_{E\mathbb{Z}_p}\otimes_{\mathbb{Z}[\mathbb{Z}_p]}C_{\text{Kh}}(C_{\bullet,\cdots,\bullet} \sqcup (S^2_{\circ},...,S^2_{\circ}))= 0,\nonumber
\end{align}
where the first chain homotopy is by theorem \ref{Thm:EquiFunct}. Note that the induced map $C_{\text{Kh}}(S^2_{\circ}):\mathbb{Z}\rightarrow \mathbb{Z}$ by a sphere with no dots is zero, and the map $C_{\text{Kh}}(C_{\bullet,\cdots,\bullet})$ induced by a cobordism with more than one dot is null homotopic. Also,
\begin{align}\label{Equ:LastThmE2}
    C_{\text{EKh}}(C'_{\circ,\cdots,\circ}\cup(S^2_{\bullet},\cdots, S^2_{\bullet})) &\simeq C_{\text{EKh}}(C_{\circ,\cdots,\circ}\cup(S^2_{\bullet},\cdots, S^2_{\bullet})).\\
    &= id_{E\mathbb{Z}_p}\otimes_{\mathbb{Z}[\mathbb{Z}_p]} C_{\text{Kh}}(C_{\circ,\cdots,\circ}\sqcup (S^2_{\bullet},...,S^2_{\bullet})).\nonumber\\
    &= id_{E\mathbb{Z}_p}\otimes_{\mathbb{Z}[\mathbb{Z}_p]}\bigg( id_{C_{\text{Kh}}(K_0)} \otimes (C_{\text{Kh}}(S^2_{\bullet})\otimes ...\otimes C_{\text{Kh}}(S^2_{\bullet}))\bigg).\nonumber\\
    &= id_{C_{\text{EKh}}(K_0)}\nonumber.
\end{align}
This follows because the map $C_{\text{Kh}}(S^2_{\bullet}):\mathbb{Z}\rightarrow \mathbb{Z}$ induced by a sphere with one dot is multiplication by $1\in \mathbb{Z}$.

For a fixed $[\eta]\in \{\bullet,\circ\}^p/\mathbb{Z}_p-\{[\bullet,\cdots,\bullet],[\circ,\cdots,\circ]\}$ and $\iota\in[\eta]$, let $f_s$ for $s\in [0,1]$ denote the ambient isotopy relative to the boundary that transforms $C'_{(\overline{\iota}_1,...,\overline{\iota}_p)}\cup (S^2_{\iota_1},...,S^2_{\iota_p})$ to an $\iota$-dotted cobordism $C_{(\overline{\iota}_1,...,\overline{\iota}_p)}\cup (S^2_{\iota_1},...,S^2_{\iota_p})$ with $C$ the standard cylinder $K_0\times[0,1]$ and disjoint from $(S^2_{\iota_1},...,S^2_{\iota_p})$. We can equivariantly isotope the equivariant cobordisms (\ref{Equ:EquiSum}) by $\tilde{f}_s=\sum_{\theta\in\mathbb{Z}_p}\hat{\theta}^{-1} f_s \hat{\theta}$. The $\tilde{f}_s:\mathcal{E}_{[\iota]}\rightarrow \mathcal{E}_{[\iota]}$ is an equivariant map with respect to the extended action $\tilde{\theta}$ for all $s\in[0,1]$. Therefore, $\tilde{f}_s$ induces a $\mathbb{Z}[\mathbb{Z}_p]$-chain homotopy 
\begin{equation}
    C_{\text{EKh}}(\mathcal{E}_{[\iota]})\simeq C_{\text{EKh}}(\mathcal{C}_{[\iota]}),
\end{equation}
where
\begin{equation}
    \mathcal{C}_{[\eta]}=\sum_{(\iota_1,...,\iota_p)\in [\eta]}C_{(\overline{\iota}_1,...,\overline{\iota}_p)}\cup (S^2_{\iota_1},...,S^2_{\iota_p}).
\end{equation}

Because the chain maps induced on the Khovanov chain complex by a cobordism consisting of more than one dot on a connected component, and the cobordism consisting of a 2-sphere with no dots as a connected component are nullhomotopic, we have
\begin{align*}
C_{\text{EKh}}(\mathcal{E}_{[\iota]})&\simeq C_{\text{EKh}}(\mathcal{C}_{[\iota]})\\
&= C_{\text{EKh}}(\sum_{(\iota_1,...,\iota_p)\in [\eta]}C_{(\overline{\iota}_1,...,\overline{\iota}_p)}\cup (S^2_{\iota_1},...,S^2_{\iota_p})\simeq 0\nonumber
\end{align*}
for all $\eta\neq (\bullet,\cdots,\bullet),(\circ,...,\circ)$. Hence we have
\begin{align*}
C_\text{EKh}(F^{op}\circ F) &\simeq C_\text{EKh}\big(\sum_{(\iota_1,...,\iota_p)\in\{\bullet,\circ\}^p}C'_{(\overline{\iota}_1,...,\overline{\iota}_p)}\cup (S^2_{\iota_1},...,S^2_{\iota_p}) \big)\\
     &\simeq \sum_{[\iota]\in \{\bullet,\circ\}/\mathbb{Z}_p} C_\text{EKh}\big(\sum_{(\iota_1,...,\iota_p)\in [\iota]}C_{(\overline{\iota}_1,...,\overline{\iota}_p)}\cup (S^2_{\iota_1},...,S^2_{\iota_p})\big)\\
     &=C_\text{EKh}(C_{\circ,\cdots,\circ}\cup(S^2_{\bullet},\cdots, S^2_{\bullet}))=id_{C_\text{EKh}(L_0)}\\
\end{align*}
This finishes the proof.
\end{proof}

\printbibliography
\end{document}